\documentclass{amsart}
\usepackage{preamble}

\title[The narrow escape problem in arbitrary dimension]{The narrow escape problem in arbitrary dimension}
\author[Carillo, Lelièvre, Normand, Stoltz, Vaes]{Louis Carillo\textsuperscript{1,2,3}, Tony Lelièvre\textsuperscript{1,2}, Thomas Normand\textsuperscript{2,1}, Gabriel Stoltz\textsuperscript{1,2}, and Urbain Vaes\textsuperscript{2,1}}
\thanks{\textsuperscript{1}CERMICS, CNRS, ENPC, Institut Polytechnique de Paris, Marne-la-Vallée, France. 
\textsuperscript{2}MATHERIALS project-team, Inria Paris, France. \textsuperscript{3}E-mail address: \href{mailto:louis.carillo@enpc.fr}{louis.carillo@enpc.fr}}

\begin{document}

\begin{abstract}
   The narrow escape problem is a prototypical example for studying entropic metastability, 
   motivated by the analysis of biological and chemical systems.
   The problem concerns the determination of the exit time and position of a Brownian particle trapped in a domain 
   with a reflecting boundary pierced by narrow holes. 
   Our goal is to investigate this problem in a general domain in any dimension (greater than or equal to two), using the quasi-stationary distribution approach to metastability.
   In particular, 
   we derive first order asymptotic expansions with error bounds of the mean exit time and the law of the exit position in the limit where the hole sizes tend to zero.
   Our analytical predictions are illustrated by numerical simulations, using dedicated Monte Carlo techniques.
\end{abstract}
\maketitle

\section{Introduction}
The \textit{narrow escape problem} consists in studying a Brownian particle diffusing in a domain with reflecting boundary, 
except for a few small absorbing holes. 
The goal is to characterize the exit time and exit position of the particle through one of the holes, 
in the limit where the hole sizes tend to zero.
This problem arises in the context of biological and chemical systems, 
where it is used to model the escape of an ion from a cell, or the binding of a ligand to a 
receptor~\cite{HolcmanSchuss04}, 
and has sparked a lot of interest in the physics community \cite{Singer2006, Benichou2008, GomezCheviakov2015, Grebenkov2024}.
A recent approach has been proposed in~\cite{Tony2024},
where the authors used the quasi-stationary distribution to study the exit time and exit position of the particle, 
which is a classic technique for metastable systems.
In this work, 
we propose a generalization of the results of~\cite{Tony2024} to arbitrary dimension and general domains.
This requires substantially new mathematical techniques compared to~\cite{Tony2024}.
The introduction is organized as follows:
we begin with the mathematical setting of the problem in~\cref{sec:setting}
and review the relevant literature in~\cref{sec:related_works};
we next introduce the quasi-stationary distribution and
explain why it is natural for studying the narrow escape problem in~\cref{sec:quasi_stationary}. 
We then present the quasi-mode used to approximate the quasi-stationary distribution in~\cref{sec:introquasi-mode}.
Our main results are stated in~\cref{sec:contribution}.
\label{sec:intro}
\subsection{Setting} 
\label{sec:setting}
We consider a smooth bounded domain~$\Omega \subset \mathbb{R}^d$ ($d \geq 2$) 
with $N$ points~$\xn{1}, \dotsc, \xn{N}$ on its boundary.
The goal of this work is to study the narrow escape problem on the domain $\Omega_\varepsilon$ defined as
\begin{equation} \label{eq:omega varepsilon}
    \Omega_\varepsilon = \Omega \setminus \bigcup_{k=1}^{\ndoors} \overline{B\Bigl(x^{(k)},\re{(k)}\Bigr)},
\end{equation}
where $\varepsilon > 0$ is a small parameter.
For $x \in \real^d$ and $r \in (0, \, + \infty)$, 
the notation $B(x, r)$ refers to the open ball of radius $r$ centered at $x$,
and $\re{(k)} \in (0, \, + \infty)$ is the radius of the $k$-th spherical hole in terms of~$\varepsilon$,
which is assumed to satisfy $\re{(k)} \to 0$ as $\varepsilon \to 0$.
See \cref{fig:domain} for an illustration of the domain~$\Omega_\varepsilon$.
The aim is to understand the event whereby a Brownian particle evolving within the domain~$\Omega_{\varepsilon}$ leaves it through one of the absorbing holes,
located at the intersection between~$\Omega$ and~$B\bigl(\xn{k}, \re{(k)}\bigr)$ for $k \in \{1, \ldots, N\}$.
The rest of the boundary of~$\Omega_{\varepsilon}$ is assumed to be reflecting.
For convenience, we 
introduce the notation
\[
    \Gamma_\mathcal{N}^\varepsilon = \partial \Omega_\varepsilon \setminus \bigcup_{k=1}^N \overline{B\left(x^{(k)}, r_\varepsilon^{(k)}\right)}, \qquad
    \Gamma_\mathcal{D}^\varepsilon = \bigcup_{k=1}^N \Gamma_k^\varepsilon \qquad \text{ where } \qquad \Gamma_k^\varepsilon = \partial B {\left(x^{(k)}, \re{(k)}\right)} \cap \Omega
\]
to refer to the reflecting Neumann region and the absorbing Dirichlet region of the boundary $\partial \Omega_\varepsilon$, respectively.
Note that $\partial \Omega_\varepsilon = \overline{\Gamma_\mathcal{N}^\varepsilon} \cup \overline{\Gamma_\mathcal{D}^\varepsilon}$.

\begin{figure}
  \centering
  \scalebox{0.7}{\begin{tikzpicture}

  \def\DtwoX{0.8}   \def\DtwoY{7.0}   \def\DtwoR{0.55}   
\def\DthreeX{1.4}  \def\DthreeY{3.0}  \def\DthreeR{1.1}  
\def\DoneX{5.8}   \def\DoneY{0.8}   \def\DoneR{0.45}    

% === Blue boundary (Neumann, reflecting) ===

\draw[blue, line width=1.8pt]
  ({\DtwoX+\DtwoR*cos(90)}, {\DtwoY+\DtwoR*sin(90)})
  .. controls (0.8, 8.5) and (2.0, 9.2) ..
  (3.5, 9.2)
  .. controls (5.5, 9.3) and (7.5, 8.5) ..
  (8.8, 7.2)
  .. controls (10.0, 5.5) and (10.2, 3.2) ..
  (9.0, 1.8)
  .. controls (8.0, 0.8) and (7.0, 0.6) ..
  ({\DoneX+\DoneR*cos(10)}, {\DoneY+\DoneR*sin(10)});

\draw[blue, line width=1.8pt]
  ({\DoneX+\DoneR*cos(170)}, {\DoneY+\DoneR*sin(170)})
  .. controls (3.8, 0.4) and (2.8, 0.3) ..
  ({\DthreeX+\DthreeR*cos(-60)}, {\DthreeY+\DthreeR*sin(-60)});

\draw[blue, line width=1.8pt]
  ({\DthreeX+\DthreeR*cos(110)}, {\DthreeY+\DthreeR*sin(110)})
  .. controls (0.4, 5.0) and (0.2, 6.0) ..
  ({\DtwoX+\DtwoR*cos(-90)}, {\DtwoY+\DtwoR*sin(-90)});

% === Red arcs (Dirichlet, absorbing) — exact circular arcs ===

\draw[red, line width=1.8pt]
  ({\DoneX+\DoneR*cos(10)}, {\DoneY+\DoneR*sin(10)})
  arc (10:170:\DoneR);

]\draw[red, line width=1.8pt]
  ({\DthreeX+\DthreeR*cos(-60)}, {\DthreeY+\DthreeR*sin(-60)})
  arc (-60:110:\DthreeR);

\draw[red, line width=1.8pt]
  ({\DtwoX+\DtwoR*cos(-90)}, {\DtwoY+\DtwoR*sin(-90)})
  arc (-90:90:\DtwoR);

% === Hole centers x^{(k)} on \partial\Omega ===
\fill (\DoneX,\DoneY) circle (1.6pt);
\node[below]      at (\DoneX,\DoneY)   {\Large $x^{(1)}$};
\fill (\DtwoX,\DtwoY) circle (1.6pt);
\node[left]       at (\DtwoX,\DtwoY)   {\Large $x^{(2)}$};
\fill (\DthreeX,\DthreeY) circle (1.6pt);
\node[above right] at (\DthreeX,\DthreeY) {\Large $x^{(3)}$};

% === Labels ===
\node at (5.0, 5.0) {\Large $\Omega_\varepsilon$};
\node at (7.8, 7.6) {\Large $\Gamma_\mathcal{N}^\varepsilon$};
\node[left] at (0.0, 7.0) {\Large $\Gamma_{2}^\varepsilon$};
\node[left] at (1.0, 2.6) {\Large $\Gamma_{3}^\varepsilon$};
\node[below] at (5.8, 0.0) {\Large $\Gamma_{1}^\varepsilon$};

\end{tikzpicture}}
  \caption{Example of a domain $\Omega_\varepsilon$ with $N=3$ holes.
    The blue parts of the boundary are reflecting,
    while the red parts are absorbing.}
  \label{fig:domain}
\end{figure}

Let $(X_t)_{t \geq 0}$ be the reflected Brownian motion in $\Omega_\varepsilon$ initialized according to a probability distribution~$\mu_0$ \cite{Tanaka, RelefctedBrownian}:
\[
        \d X_t = \sqrt{2} \, \d W_t - \mathbbm{1}_ {\partial \Omega_\varepsilon} (X_t) n(X_t) \, \d L_t,
        \quad \text{with} \quad X_0 \sim \mu_0 \, ,
\]
where $(W_t)_{t \geq 0}$ is a standard Brownian motion in $\real^d$,
for any $x \in \partial \Omega_\varepsilon$
the notation~$n(x)$ refers to the outward unit normal vector to $\Omega_\varepsilon$,
and $(L_t)_{t \geq 0}$ is the local time of the process on the boundary $\partial \Omega_\varepsilon$.
Let~$\tau_\varepsilon$ denote the exit time of the process from the domain:
\begin{equation}
    \label{eq:exit_time}
    \tau_\varepsilon = \inf {\left\{ t \geq 0 \, |\, X_t \in \overline{\Gamma_\mathcal{D}^\varepsilon} \right\}} \, .
\end{equation}
Having introduced this notation,
we are now ready to state the central question of this work more precisely.
Our aim is to understand the statistics of the pair of random variables~$(\tau_\varepsilon, \kappa_\varepsilon)$
in the limit~$\varepsilon \to 0$,
where~$\tau_\varepsilon \geq 0$ is the exit time of the Brownian particle defined in~\eqref{eq:exit_time} 
and~$\kappa_\varepsilon$ is the index of the exit hole, defined as
\begin{equation}
    \label{eq:exit_hole}
    \forall k \in \{1, \ldots, N\}, \quad \kappa_\varepsilon = k \quad \Leftrightarrow \quad X_{\tau_\varepsilon} \in \overline{\Gamma_k^\varepsilon}.
\end{equation}

\subsection{Related works}
\label{sec:related_works}
We now give an overview of the literature on the narrow escape problem.
This problem was first introduced to describe biological phenomena in dimension~$3$, 
such as the escape of an ion from a cell, or the binding of a ligand to a receptor \cite{HolcmanSchuss04}.
The authors of the latter work already established the connection between the narrow escape problem and
the problem of finding $f_k$ satisfying $-\Delta f_k = 1$ in $\Omega$ with $\partial_n f_k = 0$ on $\partial\Omega \setminus \{x^{(k)}\}$, 
which is also central in the approach we develop here (see \cref{sec:introquasi-mode}).
This Poisson problem is related to electrostatics 
and potential theory problems 
(see~\cite{Pop90, Silbergleit03} for an analysis in dimension $2$ and $3$;
in higher dimensions, 
expansions around the singularity of $f_k$ have been studied only in particular cases, 
such as hyper-spheres~\cite{Sadybekov16}).
The narrow escape problem in dimension greater than $3$ and for general domains has received less attention.
Yet, such settings arise naturally in the study of phase transitions and metastable systems,                                                                                                                                                                                 
where the free energy landscape takes the form of a broad confining potential
with steep walls and narrow saddle-point passages, 
which can be modeled by the narrow escape problem
(see for instance \cite{BicoutSzabo2000, FRENKEL199926}).
In such a high-dimensional context, direct simulations are prohibitively expensive. 
Thus, 
toy models like the one described in \cref{fig:domain} can be used as a surrogate to understand these numerical phenomena. 
More broadly, the narrow escape problem belongs to a family of questions                                                                                                                                                                                       
concerning diffusion in geometrically constrained domains.
Related works have for example addressed the convergence of reflected Brownian motion in a thin neighborhood 
of a graph as the neighborhood shrinks to the graph itself \cite{Wentzell, Hsu2025a, Hsu2025b}.

We next give an overview of the  techniques that have been used to study the narrow escape problem.
In dimensions $2$ and $3$, the first methods developed were based on
the expansion of the Green's functions~\cite{Singer2006, Benichou2008, ChenFriedman2011}, 
matched asymptotics \cite{Xiaofei2014, GomezCheviakov2015}
or specific geometrical properties of the domain \cite{Grebenkov2024}. 
Another approach relies on the reformulation of the problem using capacities \cite{Ward, Felli2021}, 
which is more general but yields less explicit asymptotic formulas.   
Besides, 
thanks to layer potential techniques, 
precise asymptotics of $\mathbb{E}_{x}[\tau_\varepsilon]$, 
where~$\mathbb{E}_{x}$ denotes the expectation when starting from a fixed deterministic point $x \in \Omega_\varepsilon$, 
were obtained in \cite{AMMARI201266, Tzou2021}.
Finally, \cite{Tony2024} used semi-classical methods and the quasi-stationary distribution, 
initially developed to study the metastability of stochastic dynamics of energetic origins (see for instance \cite{Di_Ges__2016}), 
to address the narrow escape problem. 
The main advantage of this approach is that it provides tools to obtain the law of the first exit point, 
which was previously not the main focus of the narrow escape literature
(see however~\cite{Chevalier201} for the expansion of the exit point distribution
for the sphere and the disk using Green functions).
Moreover, 
it provides a rigorous framework to justify the use of jump Markov models (Markov State Models or kinetic Monte Carlo \cite{LeBris2012}) to simulate
metastable dynamics over very long times. 
The aim of this work is to extend the contribution of \cite{Tony2024} to domains of arbitrary shapes in general dimension. 
In contrast to \cite{Tony2024}, 
in this work the holes are defined explicitly in \eqref{eq:omega varepsilon}                                                                                                                                                                
rather than as level sets of a function not known analytically.
This prevents us from building a quasi-mode in the operator domain, 
unlike in \cite{Tony2024} (see in particular Lemma 3.3), 
and requires different techniques to derive results on the exit time and the exit hole
using this quasi-mode.
The companion paper~\cite{narrow2d} studies the narrow escape problem in dimension two without assuming that the holes are spherical, 
as in~\eqref{eq:omega varepsilon}. 
It also relies on the quasi-stationary distribution and uses a similar quasimode-based approach to approximate it. 
However, it derives asymptotic expansions of the exit time and exit position to arbitrary order, 
whereas the present work provides only the leading-order terms, together with error bounds.

\subsection{The quasi-stationary approach to metastability}
\label{sec:quasi_stationary}
We now introduce the concept of quasi-stationary distribution, 
and establish its relation with the narrow escape problem, 
in the spirit of \cite{Tony2024}. 
The statistics of the pair of random variables~$(\tau_\varepsilon, \kappa_\varepsilon)$ defined in (\ref{eq:exit_time}) and (\ref{eq:exit_hole}) generally depend on the initial position of the particle.
If the holes are sufficiently small, 
the particle reaches a local equilibrium known as the quasi-stationary distribution in the domain before exiting~\cite{ColletQSD, DiGesu2016, Di_Ges__2016}.
Then the random variables~$\tau_\varepsilon$ and $\kappa_\varepsilon$ are independent and, furthermore,
the exit time is exponentially distributed:~${\tau_\varepsilon \sim {\rm Exp}(\lambda_{\varepsilon}^0)}$
(see \cref{cor:QSD} for a rigorous statement).
The rate parameter $\lambda_{\varepsilon}^0$ is the smallest eigenvalue of $\mathcal{L}_\varepsilon $, 
the differential operator $-\Delta$ with domain 
\begin{equation}
    \label{eq:domain L}
    \mathcal{D}(\mathcal{L}_\varepsilon) = \{ u \in H^1(\Omega_\varepsilon), \Delta u \in L^2(\Omega_\varepsilon) 
    \, |\, u = 0 \text{ on } \Gamma_{\mathcal{D}}^\varepsilon, \, \partial_n u = 0 \text{ on } \Gamma_{\mathcal{N}}^\varepsilon\}.
\end{equation}
The normal derivative of an element of $\mathrm{D}(\mathcal{L}_\varepsilon)$ 
is defined in a weak sense by duality between $H^{-\frac{1}{2}}(\partial \Omega_\varepsilon)$ 
and $H^{\frac{1}{2}}(\partial \Omega_\varepsilon)$: 
for a function $u \in H^1(\Omega_\varepsilon)$ such that $\Delta u \in L^2(\Omega_\varepsilon)$, 
\begin{equation}\label{eq:normal_derivative_weak_h12}
    \forall v \in H^\frac{1}{2}(\partial \Omega_\varepsilon), \quad \quad \langle \partial_n u, v \rangle_{H^{-\frac{1}{2}}(\partial \Omega_\varepsilon), H^{\frac{1}{2}}(\partial \Omega_\varepsilon)} 
    = \int_{\Omega_\varepsilon} \nabla u \cdot \nabla w + \int_{\Omega_\varepsilon} \left(\Delta u \right) w,
\end{equation}
where $w$ is any extension of $v$ to $H^1(\Omega_\varepsilon)$.
Then $\partial_n u = 0 \text{ on } \Gamma_{\mathcal{N}}^\varepsilon$ must be understood as 
\begin{equation}\label{eq:meaning dn nu zero}
    \forall v \in H^{\frac{1}{2}}_{00}(\Gamma_\mathcal{N}^\varepsilon), \quad \quad \langle \partial_n u, \tilde{v} \rangle_{H^{-\frac{1}{2}}(\partial \Omega_\varepsilon), H^{\frac{1}{2}}(\partial \Omega_\varepsilon)} = 0,
\end{equation}
where $\tilde{v}$ is the extension of $v$ by $0$ on $\partial \Omega_\varepsilon \setminus \Gamma_\mathcal{N}^\varepsilon$ 
and $H^{\frac{1}{2}}_{00}(\Gamma_\mathcal{N}^\varepsilon)$
is by definition the set of functions that belong to $H^{\frac{1}{2}}(\partial \Omega_\varepsilon)$ when extended by $0$ 
on the complement of $\Gamma_\mathcal{N}^\varepsilon$.
Let $u_\varepsilon^0$ denote an eigenvector associated to $\lambda_\varepsilon^0$: 
\begin{equation}
    \label{eq:eigen}
    \left \{
        \begin{aligned}
            -\Delta u_\varepsilon^0 &= \lambda_{\varepsilon}^0 u_\varepsilon^0
                                    && \text{ in }  \Omega_{\varepsilon} \,,\\
            \partial_n u_\varepsilon^0 & = 0
                         && \text{ on } \Gamma_{\mathcal{N}}^\varepsilon \,, \\
            u_\varepsilon^0 & = 0
                         && \text{ on } \Gamma_{\mathcal{D}}^\varepsilon \,.
        \end{aligned}
    \right .
\end{equation}
The rigorous definition of the quasi-stationary distribution is as follows: if the initial position of the particle is distributed according to $\nu_\varepsilon$,
then the distribution of the particle at any time $t \geq 0$ conditioned on the event 
$\{\tau_\varepsilon > t\}$ is still $\nu_\varepsilon$.
It can be shown that the quasi-stationary distribution $\nu_\varepsilon$ of the reflected Brownian particle
has a density with respect to the Lebesgue measure, given by $u_\varepsilon^0 \, / \int_{\Omega_\varepsilon} u_\varepsilon^0$.
With a slight abuse of notation, this density will also be denoted by $\nu_\varepsilon$.
This is our first result. 
\begin{thm}\label{thm:QSD}
    Consider the reflected Brownian motion on $\Omega_\varepsilon$ 
    that is killed when reaching $\Gamma_{\mathcal{D}}^\varepsilon$. 
    There exists a unique quasi-stationary distribution $\nu_\varepsilon$ on $\Omega_\varepsilon$,
    whose density with respect to the Lebesgue measure (also denoted by $\nu_\varepsilon$)
    is the non-negative $L^1$-normalized solution of the eigenvalue problem~\eqref{eq:eigen} associated 
    with the smallest eigenvalue $\lambda_{\varepsilon}^0$.
    Furthermore, $\nu_\varepsilon \in \mathrm{D}(\mathcal{L}_\varepsilon) \cap C^0(\overline{\Omega_\varepsilon})$.
\end{thm}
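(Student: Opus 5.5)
We split the statement into two parts: the spectral properties of $\mathcal{L}_\varepsilon$, and the identification of the quasi-stationary distribution through the semigroup of the killed process.

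\textbf{Spectral properties of $\mathcal{L}_\varepsilon$.}
Consider the closed quadratic form
\[
a(u,u)=\int_{\Omega_\varepsilon}|\nabla u|^2
\quad\text{on}\quad
V=\{u\in H^1(\Omega_\varepsilon)\,|\,u=0 \text{ on } \Gamma_\mathcal{D}^\varepsilon\}.
\]
By the definition~\eqref{eq:normal_derivative_weak_h12}--\eqref{eq:meaning dn nu zero}, its Friedrichs operator is exactly $\mathcal{L}_\varepsilon$ with domain~\eqref{eq:domain L}.

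The domain $\Omega_\varepsilon$ is bounded and Lipschitz; we assume the spheres meet $\partial\Omega$ transversally, which holds for $\varepsilon$ small. Hence $V\hookrightarrow L^2(\Omega_\varepsilon)$ is compact by Rellich, and $\mathcal{L}_\varepsilon$ has compact resolvent and discrete spectrum. Since $\Gamma_\mathcal{D}^\varepsilon$ has positive surface measure, a Poincaré inequality on $V$ gives $\lambda_\varepsilon^0>0$.

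Next we show that $\lambda_\varepsilon^0$ is simple with a positive eigenfunction. Since $a(|u|,|u|)=a(u,u)$, any minimizer of the Rayleigh quotient can be taken non-negative. Because $\Omega_\varepsilon$ is connected, the semigroup $e^{-t\mathcal{L}_\varepsilon}$ is positivity improving; by Krein--Rutman, or the Beurling--Deny criteria combined with the strong maximum principle, $\lambda_\varepsilon^0$ is simple and $u_\varepsilon^0>0$ in $\Omega_\varepsilon$.

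\textbf{Continuity up to the boundary.}
This is the step I expect to be the main obstacle, because of the mixed boundary conditions: the junctions $\partial\Omega\cap\partial B(x^{(k)},r_\varepsilon^{(k)})$ are codimension-two edges where Dirichlet and Neumann conditions meet. Elliptic regularity does not give $H^2$ there, which is why the domain is defined with $\Delta u\in L^2$ only.

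Interior smoothness and smoothness near the Neumann and Dirichlet parts away from the edges follow from standard regularity. Near the edges I would invoke De Giorgi--Nash--Moser estimates for mixed problems, as in Stampacchia's theory of mixed boundary value problems or Haller-Dintelmann--Rehberg. These give Hölder continuity of weak solutions of $-\Delta u=f$, $f\in L^p$ with $p>d/2$, up to the boundary, provided the Dirichlet part satisfies a uniform exterior cone or capacity density condition at the junction. That condition holds here since the intersection is transversal.

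To get the integrability of the right-hand side $f=\lambda_\varepsilon^0u_\varepsilon^0$, I would first bootstrap to $u_\varepsilon^0\in L^\infty$ by Moser iteration or ultracontractivity of the semigroup. Then $f\in L^\infty$, the Hölder estimate applies, and $u_\varepsilon^0\in C^0(\overline{\Omega_\varepsilon})$ with $u_\varepsilon^0=0$ on $\overline{\Gamma_\mathcal{D}^\varepsilon}$.

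\textbf{Identification of the QSD.}
The transition semigroup of the killed reflected Brownian motion is $P_t\varphi(x)=\mathbb{E}_x[\varphi(X_t)\mathbbm{1}_{\tau_\varepsilon>t}]$. I would identify its generator with $-\mathcal{L}_\varepsilon$ via the Dirichlet form correspondence (Fukushima): the form $a$ on $V$ is regular, and its associated Hunt process is the reflected Brownian motion killed on $\Gamma_\mathcal{D}^\varepsilon$.

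By self-adjointness,
\[
\int (P_t\varphi)\,u_\varepsilon^0=e^{-\lambda_\varepsilon^0t}\int\varphi\,u_\varepsilon^0 .
\]
Hence $\nu_\varepsilon$ satisfies $\mathbb{P}_{\nu_\varepsilon}(X_t\in\cdot,\tau_\varepsilon>t)=e^{-\lambda_\varepsilon^0t}\nu_\varepsilon$, which is the QSD property.

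For uniqueness, let $\nu$ be any QSD. Then $\nu P_t=e^{-\lambda t}\nu$ for some $\lambda$. Regularization by $P_t$ gives $\nu$ an $L^2$ density that is a non-negative eigenfunction of $\mathcal{L}_\varepsilon$. Since $\mathcal{L}_\varepsilon$ is self-adjoint, this density would have to be orthogonal to $u_\varepsilon^0>0$ unless it is proportional to it, which is impossible for a non-negative function. So $\nu=\nu_\varepsilon$. Alternatively, uniqueness follows from exponential convergence of conditioned laws, driven by the spectral gap, as in Champagnat--Villemonais.
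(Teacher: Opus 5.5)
Your proof is correct, but it takes a different route from the paper's.

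\textbf{How the paper argues.} The paper never identifies the killed semigroup with $e^{-t\mathcal{L}_\varepsilon}$ abstractly. Instead it proves, for each eigenpair, the Feynman--Kac formula $\mathbb{E}_x[u_k(X_t)\mathbbm{1}_{t<\tau}]=e^{-\lambda_k t}u_k(x)$. It does so by applying It\^o's formula to the SDE up to the entrance time in a $\delta$-neighbourhood of $\Gamma_\mathcal{D}^\varepsilon$. Letting $\delta\to0$ there uses exactly that $u_k$ is continuous on $\overline{\Omega_\varepsilon}$ and vanishes on $\overline{\Gamma_\mathcal{D}^\varepsilon}$. The paper then expands the density of the killed law, which lies in $L^2$ by the Bass--Hsu Gaussian bounds, in the eigenbasis. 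Uniqueness comes from the Yaglom limit, and controlling the tail of that series requires sup-norm bounds on $u_k$ polynomial in $\lambda_k$ together with Weyl's law.

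\textbf{What your route buys.} You identify the semigroup through Dirichlet form theory. After that, the QSD property is a one-line symmetry computation. Uniqueness follows from your positivity/orthogonality argument, with no need for Weyl's law or $k$-uniform $C^0$ bounds. Boundary continuity then only serves the final regularity claim. Your De Giorgi--Nash--Moser argument for it replaces the paper's Savar\'e/Shamir/Mitrea regularity plus Moser bootstrap.

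\textbf{Where the work is hidden.} The identification step carries the weight, and you should spell it out. Via the part-process theorem of Fukushima--Oshima--Takeda, you must check that $\{u\in H^1(\Omega_\varepsilon):\tilde u=0 \text{ q.e. on } \overline{\Gamma_\mathcal{D}^\varepsilon}\}$ coincides with your space $V$. You must also check that the Hunt process of the Neumann form is the SDE-defined reflected Brownian motion (Bass--Hsu, for Lipschitz domains). Moreover, the Fukushima correspondence only holds off an exceptional set. So for your uniqueness step with an arbitrary initial measure, you still need a killed transition density bounded uniformly over all starting points, which is precisely the content of \cref{lem:proba}.

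\textbf{What your route does not give.} You obtain uniqueness but not the Yaglom limit with rate $e^{-(\lambda_\varepsilon^1-\lambda_\varepsilon^0)t}$, unless you add Champagnat--Villemonais. The paper uses that limit in \cref{rema:Yaglom} and in its numerical discussion.
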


\begin{rema}\label{rema:Yaglom}
    To prove that $\nu_\varepsilon$ is unique, 
    we show that $\nu_\varepsilon$ is the Yaglom limit
    of the process: for any initial distribution~$\mu_0$ on~$\Omega_\varepsilon$,
    \[
        \nu_\varepsilon = \lim_{t \to +\infty} \mathbb{P}_{\mu_0} \left( X_t \in \, \cdot \, \middle|\, t < \tau_\varepsilon \right).
    \]
    This motivates the choice of $\nu_\varepsilon$ as the initial distribution in the results stated in this paper.
    Indeed, in the small hole limit $\varepsilon \to 0$, starting under $\nu_\varepsilon$ is a natural assumption:
    as the hole sizes shrink to $0$, the particle spends a long time in $\Omega_\varepsilon$ before exiting,
    allowing it to reach local equilibrium and forget its initial distribution~$\mu_0$.
\end{rema}

\begin{prop}\label{cor:QSD}
    Assume that $X_0 \sim \nu_\varepsilon$, 
    i.e. the initial position of the Brownian particle is distributed according to the quasi-stationary distribution $\nu_\varepsilon$.
    Then the following holds:
    \begin{itemize}
        \item The exit time $\tau_\varepsilon$ is exponentially distributed with rate~$\lambda_{\varepsilon}^0$, the smallest eigenvalue of $\mathcal{L}_\varepsilon$.
        In particular, the mean exit time is given by $\mathbb{E}_{\nu_\varepsilon}[\tau_\varepsilon] = 1/\lambda_{\varepsilon}^0$.
        \item The exit point $X_{\tau_\varepsilon}$ is independent of $\tau_\varepsilon$.
        \item The normal derivative $\partial_n \nu_\varepsilon$ 
        defined in \eqref{eq:normal_derivative_weak_h12} can be identified 
        with a Radon measure supported on~$\overline{\Gamma_\mathcal{D}^\varepsilon}$ 
        with $\partial_n \nu_\varepsilon(\Gamma_{\mathcal{D}}^\varepsilon) = - \lambda_\varepsilon^0$.
        Furthermore, 
        the law of $X_{\tau_\varepsilon}$ is given by $-\frac{1}{\lambda_\varepsilon^0} \partial_n \nu_\varepsilon$:
        \[
            \forall v \in C^0(\overline{\Gamma_\mathcal{D}^\varepsilon}), \qquad 
            \mathbb{E}\bigl[ v(X_{\tau_\varepsilon}) \bigr] 
            = - \frac{1}{\lambda_\varepsilon^0} \int_{\Gamma_\mathcal{D}^\varepsilon} v \, \mathrm{d} \left(\partial_n \nu_\varepsilon\right).
        \]
        In particular,
        the law of the exit hole $\kappa_\varepsilon$ is given by:
        \[
            \forall k \in \{1, \ldots, N\}, \qquad
            \mathbb{P}_{\nu_\varepsilon} \Big( \kappa_\varepsilon=k \Big) 
            = \mathbb{P}_{\nu_\varepsilon} \Big( X_{\tau_\varepsilon} \in \overline{\Gamma_k^\varepsilon} \Big) 
            = - \frac{1}{\lambda_\varepsilon^0} \partial_n \nu_\varepsilon \left( \Gamma_k^\varepsilon\right).
        \]
    \end{itemize}
\end{prop}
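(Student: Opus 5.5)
The plan is to use Theorem~\ref{thm:QSD} and the semigroup of the killed process. For $f$ bounded measurable on $\Omega_\varepsilon$, set $P_t f(x) = \mathbb{E}_x[f(X_t)\mathbbm{1}_{t<\tau_\varepsilon}]$. This is the semigroup $e^{-t\mathcal{L}_\varepsilon}$ on $L^2(\Omega_\varepsilon)$, where $\mathcal{L}_\varepsilon$ is the self-adjoint operator of the Dirichlet form $\int |\nabla u|^2$ on $\{u\in H^1, u=0 \text{ on } \Gamma_\mathcal{D}^\varepsilon\}$. I take this identification as part of the proof of Theorem~\ref{thm:QSD}.

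\textbf{Exponential law.} By symmetry of $P_t$ and $P_t \mathbbm{1} = \mathbb{P}_\cdot(\tau_\varepsilon>t)$,
\[
\mathbb{P}_{\nu_\varepsilon}(\tau_\varepsilon>t)=\int \nu_\varepsilon P_t\mathbbm{1} = \int (P_t\nu_\varepsilon)\,\mathbbm{1} = e^{-\lambda_\varepsilon^0 t}\int\nu_\varepsilon = e^{-\lambda_\varepsilon^0 t}.
\]
Hence $\tau_\varepsilon\sim\mathrm{Exp}(\lambda_\varepsilon^0)$ and $\mathbb{E}_{\nu_\varepsilon}[\tau_\varepsilon]=1/\lambda_\varepsilon^0$.

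\textbf{Independence.} I would use the standard Markov argument. For $A\subset\partial\Omega_\varepsilon$, the strong Markov property at time $t$ gives
\[
\mathbb{P}_{\nu_\varepsilon}(\tau_\varepsilon>t, X_{\tau_\varepsilon}\in A)=\mathbb{E}_{\nu_\varepsilon}\bigl[\mathbbm{1}_{\tau_\varepsilon>t}\,\mathbb{P}_{X_t}(X_{\tau_\varepsilon}\in A)\bigr].
\]
By the quasi-stationarity property $\mathcal{L}(X_t\mid\tau_\varepsilon>t)=\nu_\varepsilon$, which follows from $P_t\nu_\varepsilon=e^{-\lambda_\varepsilon^0 t}\nu_\varepsilon$ by the same duality, the right-hand side equals $\mathbb{P}_{\nu_\varepsilon}(\tau_\varepsilon>t)\,\mathbb{P}_{\nu_\varepsilon}(X_{\tau_\varepsilon}\in A)$. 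This product form on these sets is enough for independence.

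\textbf{Exit law.} Let $v\in C^0(\overline{\Gamma_\mathcal{D}^\varepsilon})$ and let $h$ be its harmonic extension: $\Delta h=0$ in $\Omega_\varepsilon$, $h=v$ on $\Gamma_\mathcal{D}^\varepsilon$, $\partial_n h=0$ on $\Gamma_\mathcal{N}^\varepsilon$. Then $h(x)=\mathbb{E}_x[v(X_{\tau_\varepsilon})]$ by Itô's formula for reflected Brownian motion. First take $v$ smooth, say the trace of a smooth function, so that $h\in H^1$ is obtained by Lax--Milgram. Taking $w=h$ in \eqref{eq:normal_derivative_weak_h12} with $u=\nu_\varepsilon$, and using $\Delta\nu_\varepsilon=-\lambda_\varepsilon^0\nu_\varepsilon$, gives
\[
\langle\partial_n\nu_\varepsilon,h\rangle=\int\nabla\nu_\varepsilon\cdot\nabla h-\lambda_\varepsilon^0\int\nu_\varepsilon h.
\]
Since $h$ is weakly harmonic with Neumann condition and $\nu_\varepsilon$ vanishes on $\Gamma_\mathcal{D}^\varepsilon$, the gradient term vanishes. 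Hence
\[
\mathbb{E}_{\nu_\varepsilon}[v(X_{\tau_\varepsilon})]=\int \nu_\varepsilon h=-\frac{1}{\lambda_\varepsilon^0}\langle\partial_n\nu_\varepsilon,\tilde v\rangle.
\]
In the last pairing, $h$ has been replaced by any extension of $v$; this is legitimate because $\partial_n\nu_\varepsilon$ vanishes on $\Gamma_\mathcal{N}^\varepsilon$ in the sense of \eqref{eq:meaning dn nu zero}.

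\textbf{Extension to a measure.} Since $\nu_\varepsilon\ge0$, the linear form $v\mapsto-\langle\partial_n\nu_\varepsilon,v\rangle$ is nonnegative: it equals $\lambda_\varepsilon^0\int\nu_\varepsilon h$ with $h\ge0$ when $v\ge0$, by the maximum principle. It is therefore bounded by $\lambda_\varepsilon^0\|v\|_\infty$. It thus extends by density to a positive functional on $C^0(\overline{\Gamma_\mathcal{D}^\varepsilon})$, which by Riesz is a Radon measure. Taking $v=1$, so $h=1$, gives total mass $-\lambda_\varepsilon^0$. For the hole probabilities, apply the formula to continuous approximations of $\mathbbm{1}_{\overline{\Gamma_k^\varepsilon}}$. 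The closures $\overline{\Gamma_k^\varepsilon}$ are disjoint for small $\varepsilon$, so such approximations are locally constant and continuous on $\overline{\Gamma_\mathcal{D}^\varepsilon}$, and the identity is exact.

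\textbf{Main obstacle.} I expect the hardest step to be the rigorous justification that $h(x)=\mathbb{E}_x[v(X_{\tau_\varepsilon})]$ and the handling of the density argument. The domain $\Omega_\varepsilon$ has corners where the spheres meet $\partial\Omega$, so $h$ need not be $C^2$ up to the boundary. My first attempt would be to avoid Itô's formula on $h$ directly, and instead use the Dirichlet-form/Fukushima theory: the reflected process is associated with the form on $H^1(\Omega_\varepsilon)$, and the killed process with its part on $\Gamma_\mathcal{D}^\varepsilon$. Harmonic measure is then characterized by $H^1$-projection. The identification extends from smooth $v$ to continuous $v$ by uniform approximation together with continuity of $h$ up to the boundary, which I would take from the regularity already used for $\nu_\varepsilon\in C^0(\overline{\Omega_\varepsilon})$ in Theorem~\ref{thm:QSD}.
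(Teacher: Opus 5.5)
Your proposal is correct. For the exponential law, the independence (Markov property at time $t$ plus quasi-stationarity) and the exit-law identity it is the paper's argument. That identity comes from the mixed harmonic extension $h$, Green's identity in which $\int\nabla\nu_\varepsilon\cdot\nabla h$ vanishes, and replacing $h$ by any extension via \eqref{eq:meaning dn nu zero}. Your symmetry computation is a rephrasing of $q_t^{\nu}=\mathrm{e}^{-\lambda_0 t}\nu$ from the proof of Theorem~\ref{thm:QSD}.

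For the Feynman--Kac step you worry about, the paper needs no Dirichlet-form theory. It applies It\^o's formula only up to the hitting time of $\Gamma_\mathcal{D}^\varepsilon+B(0,\delta)$. Since the corners lie in $\overline{\Gamma_\mathcal{D}^\varepsilon}$, $h$ is smooth away from that set. It then lets $\delta\to0$ and $t\to\infty$, using continuity of $h$ on $\overline{\Omega_\varepsilon}$.

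The real difference is the Radon-measure step. The paper cites $L^2(\partial\Omega_\varepsilon)$ regularity of $\partial_n\nu_\varepsilon$ on creased domains (or, for smooth boundaries, positivity of the distribution). You instead read positivity off $-\langle\partial_n\nu_\varepsilon,v\rangle=\lambda_\varepsilon^0\int\nu_\varepsilon h$ with $h\ge0$, get the bound $\lambda_\varepsilon^0\|v\|_\infty$, and conclude by density and Riesz--Markov. This is self-contained and treats both geometries at once.

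What your route loses is absolute continuity with respect to surface measure. It only yields a measure on the compact set $\overline{\Gamma_\mathcal{D}^\varepsilon}$ with total mass $-\lambda_\varepsilon^0$. The statement writes $\partial_n\nu_\varepsilon(\Gamma_\mathcal{D}^\varepsilon)=-\lambda_\varepsilon^0$ and $\partial_n\nu_\varepsilon(\Gamma_k^\varepsilon)$ with open sets. So you must also show that the interface $\Sigma=\overline{\Gamma_\mathcal{D}^\varepsilon}\cap\overline{\Gamma_\mathcal{N}^\varepsilon}$ carries no mass.

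Your own tools suffice for this:
\begin{itemize}
\item $\Sigma$ has codimension $2$, hence zero $H^1$-capacity. So there are smooth $w_n$ with $0\le w_n\le1$, $w_n=1$ near $\Sigma$, and $\|w_n\|_{H^1}\to0$.
\item The harmonic extensions $h_n$ of $w_n|_{\Gamma_\mathcal{D}^\varepsilon}$ satisfy $\|\nabla h_n\|_{L^2}\le\|\nabla w_n\|_{L^2}$ by energy minimality.
\item Poincar\'e's inequality on $H^1_{0,\Gamma_\mathcal{D}^\varepsilon}(\Omega_\varepsilon)$, applied to $h_n-w_n$, then gives $\|h_n\|_{L^2}\to0$.
\item Therefore $-\partial_n\nu_\varepsilon(\Sigma)\le\lambda_\varepsilon^0\int\nu_\varepsilon h_n\to0$.
\end{itemize}
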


The proofs of \cref{thm:QSD} and of \cref{cor:QSD} are provided in \cref{sec:app QSD}.

\Cref{cor:QSD} makes explicit the relationship between the eigenvalue problem~\eqref{eq:eigen},
and the statistics of the exit time and exit hole.
The goal of this work is to compute approximations of $\lambda_\varepsilon^0$ and $\partial_n \nu_\varepsilon$ 
in the small hole limit $\varepsilon \to 0$,
to characterize the laws of $\tau_\varepsilon$ and $\kappa_\varepsilon$ with the help of \cref{cor:QSD}.

\subsection{A quasi-mode to approximate the quasi-stationary distribution}
\label{sec:introquasi-mode}
We now formally introduce the quasi-mode $\varphi_\varepsilon$ that will be used to obtain results on the quasi-stationary distribution $\nu_\varepsilon$.
For a general domain, there is no explicit expression for~$\nu_\varepsilon$ and its normal derivative 
for positive~$\varepsilon$.
Nevertheless, in the absence of holes, the eigenvalue problem~\eqref{eq:eigen} reduces to the Neumann problem in $\Omega$,
in which case the smallest eigenvalue is $0$ and associated eigenfunctions are constant functions.
Thus, it is natural to expect that $\lambda_{\varepsilon}^0 \to 0$ as $\varepsilon \to 0$
and that~$\nu_\varepsilon$ tends to the constant $1/|\Omega|$ in the same limit,
at least far away from the exit holes.
Therefore, to understand the behavior of the eigenvalue~$\lambda_{\varepsilon}^0$ in the limit~$\varepsilon \to 0$,
we consider the following ansatz $\varphi_\varepsilon$ for $\nu_\varepsilon$ up to a multiplicative constant:
\begin{equation}
    \label{eq:def phi}
    \varphi_\varepsilon(x) = 
    1 + \sum_{k=1}^{N} \Kek f_k(x) \, ,
\end{equation}
where, for any $k \in \{1, \ldots, N\}$, the real number $\Kek$ is a small parameter that will be determined later on as a function of $r_\varepsilon^{(k)}$,
and $f_k$ are functions having singularities at the corresponding holes.
The normalization of $\varphi_\varepsilon$ has been chosen so that $\int_{\Omega_\varepsilon}\nu_\varepsilon \varphi_\varepsilon \simeq 1$.
Substituting this ansatz into \eqref{eq:eigen}, we are naturally led to considering the following equation for $f_k$:
\begin{equation}
    \label{eq:fkepd}
    \left \{
    \begin{aligned}
        -\Delta f_k & = C_d
        &&\text{ in }  \Omega,\\
        \partial_n f_k & = 0 &&\text{ on } \partial \Omega \setminus \{x^{(k)}\},
    \end{aligned}
    \right .
\end{equation}
where the positive constant $C_d$ is given an explicit expression in \eqref{eq:def_Cd}.
This choice of $C_d$ is a consequence of the decomposition of $f_k$ performed in \cref{sec:construction fk}
and in particular the compatibility condition of the Neumann problem \eqref{eq:subsingular_terms}.
The Neumann problem \eqref{eq:fkepd} is not standard as one point is missing from the boundary condition.
We give this equation a rigorous sense in \cref{sec:construction fk}.

Formally, assuming enough regularity on $f_k$, we have
\begin{equation}
    \label{eq:quasi-mode edp}
    \left \{
        \begin{aligned}
            -\Delta \varphi_\varepsilon &= - \sum_{k=1}^{\ndoors} \Kek \Delta f_k
            = \left(C_d \sum_{k=1}^{\ndoors}  \Kek \right)\varphi_\varepsilon + R_1\!\left((\Kek, f_k)_{1 \le k \le N}\right)
                                    && \text{ in }  \Omega_{\varepsilon} \,,\\
            \partial_n \varphi_{\varepsilon} & = 0
                         && \text{ on } \Gamma_{\mathcal{N}}^\varepsilon \,, \\
            \varphi_{\varepsilon} & = R_2\!\left((\Kek, f_k)_{1 \le k \le N}\right)
                         && \text{ on } \Gamma_{\mathcal{D}}^\varepsilon \, ,
        \end{aligned}
    \right .
\end{equation}
with the remainders $R_1$ and $R_2$ defined as
\[
    R_1\!\left((\Kek, f_k)_{1 \le k \le N}\right) = - \left( C_d \sum_{\ell=1}^{\ndoors} K_\varepsilon^{(\ell)} \right) \sum_{k=1}^{\ndoors} \Kek  f_k \,\,
    \qquad \,\,R_2\!\left((\Kek, f_k)_{1 \le k \le N}\right) = 1 + \sum_{k=1}^{N} \Kek f_k.
\]
The system \eqref{eq:quasi-mode edp} is very similar to \eqref{eq:eigen}, 
with however the two remainders~$R_1$ and~$R_2$. 
It is in this sense that $\varphi_\varepsilon$ is a quasi-mode for the eigenvalue problem \eqref{eq:eigen} 
(see \cref{prop:properties quasi-mode} for a precise statement of the properties of $\varphi_\varepsilon$).
We emphasize that, 
because of the non-homogeneous Dirichlet condition in \eqref{eq:quasi-mode edp}, the quasi-mode $\varphi_\varepsilon$ does not belong to the domain of the operator $\mathcal{L}_\varepsilon$, 
and it is not even in the form domain of the operator $\mathcal{L}_\varepsilon$ (see \eqref{eq:form domain} for its definition).  
Before using the quasi-mode $\varphi_\varepsilon$ to determine the distribution of the two random numbers $\tau_\varepsilon$ and $\kappa_\varepsilon$, 
we need to construct and control the functions~$f_k$ to ensure that the remainders $R_1$ and $R_2$ are small.  
The parameters $(\Kek)_{k \in \{1, \ldots, N\}}$ are 
chosen to ensure that~$\Kek f_k \sim -1$ on the boundary of the hole $\Gamma_k^\varepsilon$ in 
the small $\varepsilon$ limit.
Explicitly, 
they are given by, for all~{${k \in \{1, \ldots, N\}}$},
\begin{equation}
        \label{eq:def K}
        \Kek  = \left \{ \begin{aligned}
            & -\left( \log \re{(k)} \right)^{-1}\, && \text{ for } d = 2,\\
            & \left( \re{(k)}\right)^{d-2}\, && \text{ for } d \geq 3.
        \end{aligned} \right.
    \end{equation}
The benefit of this particular choice becomes apparent in the proof of \cref{lem:quasi-mode}.

\subsection{Our contribution and outline of the paper}
\label{sec:contribution}
We now describe the main contributions of this work.
The following parameter governs the asymptotic behavior of the quantities of interest: 
\begin{equation}\label{eq:def Keo}
    \Keo = \sum_{k=1}^N \Kek.
\end{equation}
Our first result concerns the mean exit time. 
\begin{thm}
    \label{thm:lam0}

    Assume that~$X_0 \sim \nu_\varepsilon$.
    Then, as~$\varepsilon \to 0$, the first eigenvalue 
    $\lambda_{\varepsilon}^0$, which is the reciprocal of the mean exit time, satisfies
    \[
        \lambda_\varepsilon^0 = C_d \Keo \Bigl( 1 + \mathrm{O}{\left(\mathcal{E}_d{\left(\Keo\right)}\right)}\Bigr),
    \]
    with the error term $\mathcal{E}_d\bigl(\Keo\bigr)$ defined as
    \begin{equation}\label{eq:def_Ed}
        \mathcal{E}_d(\Keo) = \left\{ \begin{aligned}
            & \, \Keo\, && \text{ for } d = 2 \, ,\\ 
            & \, \Keo \left| \log(\Keo) \right|\, && \text{ for } d = 3 \, ,\\
            & \, \Keo^{\frac{1}{d-2}}\, && \text{ for } d \geq 4 \, ,
            \end{aligned}
        \right.
    \end{equation}
    and the constant
    \begin{equation}
        \label{eq:def_Cd}
        C_d = \frac{\max \{d-2, 1\}}{2|\Omega|} \omega_d>0,
    \end{equation}
    where the constant $\omega_d = 2\pi^{\frac{d}{2}} / \Gamma\left(\frac{d}{2}\right)$
     is the surface area of the unit sphere in $\real^d$.
\end{thm}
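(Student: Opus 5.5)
We prove the theorem with the quasi-mode $\varphi_\varepsilon$ of \eqref{eq:def phi}. The main tool is the Green formula tested against the (nonnegative) eigenfunction $\nu_\varepsilon$, rather than a Rayleigh quotient: $\varphi_\varepsilon$ does not lie in the form domain, so a min–max argument is not available directly. Since $\nu_\varepsilon \in \mathrm{D}(\mathcal{L}_\varepsilon)\cap C^0(\overline{\Omega_\varepsilon})$ by \cref{thm:QSD}, and $\varphi_\varepsilon$ is smooth in $\overline{\Omega_\varepsilon}$ (the singularities of $f_k$ sit at $x^{(k)}\notin\overline{\Omega_\varepsilon}$), we can integrate by parts twice on $\Omega_\varepsilon$. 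Using $\partial_n\nu_\varepsilon=\partial_n\varphi_\varepsilon=0$ on $\Gamma^\varepsilon_{\mathcal N}$ and $\nu_\varepsilon=0$ on $\Gamma^\varepsilon_{\mathcal D}$, this gives
\[
\lambda_\varepsilon^0\int_{\Omega_\varepsilon}\nu_\varepsilon\varphi_\varepsilon
= C_d\Keo\int_{\Omega_\varepsilon}\nu_\varepsilon\varphi_\varepsilon+\int_{\Omega_\varepsilon}\nu_\varepsilon R_1-\int_{\Gamma_{\mathcal D}^\varepsilon}\varphi_\varepsilon\,\mathrm d(\partial_n\nu_\varepsilon).
\]
By \cref{cor:QSD}, $-\partial_n\nu_\varepsilon$ is a nonnegative measure of total mass $\lambda_\varepsilon^0$. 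Hence the last term is bounded by $\lambda_\varepsilon^0\,\|R_2\|_{L^\infty(\Gamma_{\mathcal D}^\varepsilon)}$. Dividing by $\int\nu_\varepsilon\varphi_\varepsilon$ yields
\[
\lambda_\varepsilon^0\Bigl(1+\mathrm O\bigl(\|R_2\|_\infty/\textstyle\int\nu_\varepsilon\varphi_\varepsilon\bigr)\Bigr)=C_d\Keo+\mathrm O\Bigl(\int\nu_\varepsilon|R_1|\big/\textstyle\int\nu_\varepsilon\varphi_\varepsilon\Bigr).
\]

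The next step is to quantify the remainders using the construction of $f_k$ from \cref{sec:construction fk}. There $f_k=G_k+g_k$, where $g_k$ is bounded and $G_k$ is the explicit singular part: $-\log|x-x^{(k)}|$ for $d=2$, and $|x-x^{(k)}|^{2-d}$ plus curvature corrections for $d\ge3$. The corrections produce the subsingular terms, which in $d=3$ are of the form $\log|x-x^{(k)}|$ and in $d\ge4$ are of order $|x-x^{(k)}|^{3-d}$. The normalization \eqref{eq:def K} makes $\Kek G_k = \pm1$ exactly on $\Gamma_k^\varepsilon$, since $|x-x^{(k)}|=\re{(k)}$ there.

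We then estimate $R_2$ on $\Gamma_k^\varepsilon$. The contributions of the other holes and of $g_k$ are $\mathrm O(\Keo)$. The subsingular term gives $\Kek\,\re{(k)\,3-d}=\mathrm O\bigl((\Kek)^{1/(d-2)}\bigr)$ for $d\ge4$, and $\Kek|\log\re{(k)}|\sim\Kek|\log\Kek|$ for $d=3$. This reproduces exactly $\mathcal E_d(\Keo)$. We also need the sign of $\varphi_\varepsilon=1-1+\dots$ to be handled correctly; in $d\ge3$ the singular part in the ansatz should be $-|x-x^{(k)}|^{2-d}$, so that $\Kek f_k\approx-1$.

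For $R_1=-C_d\Keo\sum_k\Kek f_k$, we only need $\int\nu_\varepsilon|R_1|\lesssim\Keo\,\mathcal E_d$ relative to $\int\nu_\varepsilon\varphi_\varepsilon$. Since $f_k\in L^1(\Omega)$, the plan is to show that $\nu_\varepsilon$ is uniformly bounded in $L^\infty$, or close to $1/|\Omega|$ in a suitable sense. Then $\int\nu_\varepsilon|\Kek f_k|\lesssim\Kek\|f_k\|_{L^1}=\mathrm O(\Keo)$, which is smaller than $\mathcal E_d(\Keo)$ for every $d$.

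The main obstacle is the lower bound $\int_{\Omega_\varepsilon}\nu_\varepsilon\varphi_\varepsilon\ge c>0$, together with the $L^\infty$ control of $\nu_\varepsilon$. The difficulty is that $\varphi_\varepsilon\approx0$ near the holes and could be negative there, and nothing yet prevents $\nu_\varepsilon$ from concentrating near a hole. The first approach is a spectral-gap argument. The second Neumann-type eigenvalue of $\mathcal L_\varepsilon$ stays bounded below uniformly as $\varepsilon\to0$, since the holes have vanishing capacity; this can be shown by comparison with the Neumann Laplacian on $\Omega$, using extension operators on $\Omega_\varepsilon$ uniform in $\varepsilon$. Consequently $\nu_\varepsilon$ is $L^2$-close to a constant $\theta_\varepsilon$, with error controlled by $\sqrt{\lambda^0_\varepsilon}$. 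Combining this with $\int\nu_\varepsilon=1$ gives $\theta_\varepsilon\to1/|\Omega|$. Because $\varphi_\varepsilon\to1$ in $L^2(\Omega_\varepsilon)$ (as $f_k\in L^2$ when $d\le3$; in higher dimension one uses $\Kek f_k\to0$ in $L^1$ plus boundedness), this yields $\int\nu_\varepsilon\varphi_\varepsilon\to1$. An a priori upper bound $\lambda^0_\varepsilon\lesssim\Keo$ is obtained from a capacity test function, and uniform $L^\infty$ bounds on $\nu_\varepsilon$ follow from elliptic/heat-kernel estimates, for instance ultracontractivity with constants uniform in $\varepsilon$. Plugging these into the displayed identity gives $\lambda_\varepsilon^0=C_d\Keo(1+\mathrm O(\mathcal E_d(\Keo)))$.
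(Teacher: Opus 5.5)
Your strategy is the paper's. You apply Green's formula to $\varphi_\varepsilon$ and $\nu_\varepsilon$ on $\Omega_\varepsilon$, bound the boundary term by $\lambda_\varepsilon^0\|\varphi_\varepsilon\|_{L^\infty(\Gamma_{\mathcal D}^\varepsilon)}$ (since $-\partial_n\nu_\varepsilon$ is a nonnegative measure of mass $\lambda_\varepsilon^0$), get $\|\varphi_\varepsilon\|_{L^\infty(\Gamma_{\mathcal D}^\varepsilon)}=\mathrm{O}(\mathcal E_d(\Keo))$ from the local expansion of $f_k$, and use a uniform Poincar\'e--Wirtinger inequality on $\Omega_\varepsilon$. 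You depart from the paper in one substantive step, and it is unproved: a uniform-in-$\varepsilon$ $L^\infty$ bound on $\nu_\varepsilon$, ``ultracontractivity with constants uniform in $\varepsilon$''. You invoke it to control $\int\nu_\varepsilon R_1$, believing that $f_k\notin L^2$ blocks an $L^2$ argument for $d\ge4$. Proving it would need an $\varepsilon$-independent Sobolev constant on the perforated domains and a Moser iteration run all the way to $L^\infty$. The paper's \cref{lem:continuity} stops at a finite exponent and then uses elliptic-regularity constants that depend on the domain. As written this is an unsupported claim, but you do not need it.

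First, the $R_1$ term cancels. Since $-\Delta\varphi_\varepsilon=C_d\Keo$ is constant and $\int_{\Omega_\varepsilon}\nu_\varepsilon=1$, you have exactly $C_d\Keo\int\nu_\varepsilon\varphi_\varepsilon+\int\nu_\varepsilon R_1=C_d\Keo$. Your identity therefore reads $\lambda_\varepsilon^0(\varphi_\varepsilon,\nu_\varepsilon)_{\Omega_\varepsilon}=C_d\Keo-\int_{\Gamma_{\mathcal D}^\varepsilon}\varphi_\varepsilon\,\mathrm d(\partial_n\nu_\varepsilon)$. Because $\int\nu_\varepsilon R_1=-C_d\Keo\bigl((\varphi_\varepsilon,\nu_\varepsilon)_{\Omega_\varepsilon}-1\bigr)$, both formulations need the same estimate: $(\varphi_\varepsilon,\nu_\varepsilon)_{\Omega_\varepsilon}=1+\mathrm{O}(\mathcal E_d(\Keo))$.

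Second, the relevant norm is the $L^2$ norm on $\Omega_\varepsilon$, not on $\Omega$. Integrating $|x-x^{(k)}|^{4-2d}$ in polar coordinates over $\re{(k)}<|x-x^{(k)}|<\zeta$ gives $\|f_k\|^2_{L^2(\Omega_\varepsilon)}=\mathrm{O}(|\log\re{(k)}|)$ for $d=4$ and $\mathrm{O}\bigl((\re{(k)})^{4-d}\bigr)$ for $d\ge5$. Hence $\Kek\|f_k\|_{L^2(\Omega_\varepsilon)}=\mathrm{O}\bigl((\Kek)^{1/(d-2)}\bigr)$. Your Poincar\'e--Wirtinger step already gives $\|\nu_\varepsilon\|_{L^2}^2-|\Omega_\varepsilon|^{-1}\le C\lambda_\varepsilon^0\|\nu_\varepsilon\|_{L^2}^2$, so $\|\nu_\varepsilon\|_{L^2(\Omega_\varepsilon)}\le C'$ once $\lambda_\varepsilon^0\to0$. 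Cauchy--Schwarz then gives $|(\varphi_\varepsilon-1,\nu_\varepsilon)_{\Omega_\varepsilon}|\le C\mathcal E_d(\Keo)$ in every dimension. This is exactly the paper's chain: \cref{lem:L2 estimate fk}, \cref{prop:L2 quasi-mode}, \cref{prop: phi scalar u}.

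Note also that what makes $\nu_\varepsilon$ close to a constant is the uniform Neumann Poincar\'e--Wirtinger constant of $\Omega_\varepsilon$. A lower bound on the second eigenvalue of $\mathcal L_\varepsilon$, or the vanishing capacity of the holes, does not give this; your uniform-extension argument does supply the former.

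Finally, fix the sign for $d=2$. The singular part is $+\log|x-x^{(k)}|$, so that $\Kek f_k\approx-1$ on $\Gamma_k^\varepsilon$ with $\Kek=-(\log\re{(k)})^{-1}>0$; with $-\log$ you would get $\varphi_\varepsilon\approx2$ there. Summing the per-hole errors into $\mathcal E_d(\Keo)$ also needs concavity of $t\mapsto t^{1/(d-2)}$ and monotonicity of $t|\log t|$ near $0$.
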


\begin{rema}\label{rema:mean exit time}
    The scaling of the eigenvalue~$\lambda_\varepsilon^0$ with respect to~$(r_\varepsilon^{(k)})_{k \in \{1, \ldots, N\}}$ provided by~\cref{thm:lam0} is consistent with the results in the literature on the narrow escape problem.
    For instance in~\cite[Theorem 4.1]{ChenFriedman2011}, 
    for a single hole of radius $r_\varepsilon$ in dimension $d = 3$, 
    the authors show that 
    \[
        \mathbb{E}_\mu[\tau_\varepsilon] = \frac{|\Omega|}{4} r_\varepsilon^{-1} + \mathrm{O}{\left(\left|\log\left(r_\varepsilon\right)\right|\right)},
    \]
    where $\mu$ is the uniform distribution on $\Omega$.
    From \cref{thm:lam0}, we obtain
    \[
        \mathbb{E}_{\nu_\varepsilon}[\tau_\varepsilon] = \frac{1}{\lambda_\varepsilon^0} = \frac{|\Omega|}{2 \pi  r_\varepsilon} \Bigl( 1 + \mathrm{O}{\left(r_\varepsilon |\log(r_\varepsilon)|\right)}\Bigr).
    \]
    Both quantities scale like $C/r_\varepsilon$ for some constant $C > 0$.
    The multiplicative constant differs since the authors considered slightly different hole shapes
    than considered here.

    Let us add a comment on the error $\mathcal{E}_d$ in dimension greater than or equal to~$4$. 
    For a single hole of radius $r_\varepsilon$ in dimension $d \geq 4$,
    the eigenvalue $\lambda_\varepsilon^0$ behaves as 
    \[
        \lambda_\varepsilon^0 = C_d r_\varepsilon^{d-2}\bigl(1 + \mathrm{O}(r_\varepsilon) \bigr),
    \]
    so the relative error on the mean exit time is actually of order $r_\varepsilon$.
\end{rema}

Our second contribution concerns the distribution of the exit hole $\kappa_\varepsilon$. 

\begin{thm}\label{thm:exit hole distribution}
    Assume that~$X_0 \sim \nu_\varepsilon$. 
    Then,
    the distribution of the exit hole is the following:
    for any~{${k \in \{1, \ldots, N\}}$},
    \[
    \mathbb{P}_{\nu_\varepsilon} \Big( \kappa_\varepsilon=k \Big) 
    = \mathbb{P}_{\nu_\varepsilon} \Big( X_{\tau_\varepsilon} \in \overline{\Gamma_{k}^\varepsilon} \Big)
    =\frac{\Kek}{\Keo}
    + \mathrm{O}\left(\mathcal{E}_d(\Keo)\right),
    \]
    with $\mathcal{E}_d(\Keo)$ defined in \eqref{eq:def_Ed}.
\end{thm}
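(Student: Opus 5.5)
By \cref{cor:QSD}, we have $\mathbb{P}_{\nu_\varepsilon}(\kappa_\varepsilon=k) = -\partial_n \nu_\varepsilon(\Gamma_k^\varepsilon)/\lambda_\varepsilon^0$. By \cref{thm:lam0}, $\lambda_\varepsilon^0 = C_d \Keo(1+\mathrm{O}(\mathcal{E}_d(\Keo)))$. It therefore suffices to show that
\[
    -\partial_n \nu_\varepsilon(\Gamma_k^\varepsilon) = C_d \Kek + \mathrm{O}\bigl(\Keo\,\mathcal{E}_d(\Keo)\bigr).
\]
We obtain this flux by testing against a well-chosen function. Let $\chi_k$ be a smooth cutoff equal to $1$ near $x^{(k)}$ and to $0$ near the other holes. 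Green's formula \eqref{eq:normal_derivative_weak_h12}, applied with $u=\nu_\varepsilon$ and an $H^1$ test function $w$, gives
\[
    \langle \partial_n \nu_\varepsilon, w\rangle = \int_{\Omega_\varepsilon}\nabla\nu_\varepsilon\cdot\nabla w - \lambda_\varepsilon^0\int_{\Omega_\varepsilon}\nu_\varepsilon w .
\]
The natural choice is $w = \chi_k \varphi_\varepsilon$, or a variant $w_k = 1 + \Kek f_k$ localized near hole $k$. On $\Gamma_k^\varepsilon$ this function is approximately $0$, not $1$, so it is the wrong weight. Instead we use $w = \chi_k$ itself: it equals $1$ on $\Gamma_k^\varepsilon$ and vanishes on the other holes. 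Hence
\[
    -\partial_n\nu_\varepsilon(\Gamma_k^\varepsilon) = -\int\nabla\nu_\varepsilon\cdot\nabla\chi_k + \lambda_\varepsilon^0\int\nu_\varepsilon\chi_k ,
\]
and only the behavior of $\nu_\varepsilon$ away from the holes enters.

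Next, we replace $\nu_\varepsilon$ by its quasi-mode approximation $\varphi_\varepsilon/\int\varphi_\varepsilon$. Away from the holes, $\varphi_\varepsilon$ is smooth, since the $f_k$ are only singular at $x^{(k)}$. Using the equation \eqref{eq:fkepd}, integrating by parts on the region where $\nabla\chi_k\neq 0$, and using the Neumann conditions, we get
\[
    -\int\nabla\varphi_\varepsilon\cdot\nabla\chi_k + C_d\Keo\int\varphi_\varepsilon\chi_k = -C_d\Kek + C_d\Keo\int\chi_k + \mathrm{O}\bigl((\Keo)^2\bigr).
\]
This holds because $\int_{\Omega}\nabla f_j\cdot\nabla\chi_k$ equals $C_d\int\chi_k - C_d\mathbbm{1}_{j=k}$ (up to the normalization of $f_k$), the "$-C_d$" coming from the point flux at $x^{(k)}$ as encoded in the compatibility condition \eqref{eq:subsingular_terms}. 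Combined with the factor $1/|\Omega|$ from normalization, the $\int\chi_k$ terms cancel and the flux $C_d\Kek$ remains (modulo the sign convention). The corrections from $\lambda_\varepsilon^0 - C_d\Keo$ are of order $\Keo\,\mathcal{E}_d$.

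The main obstacle is controlling $\nu_\varepsilon - \varphi_\varepsilon/\int\varphi_\varepsilon$ in $H^1$ on the support of $\nabla\chi_k$, with an error $\mathrm{O}(\Keo\,\mathcal{E}_d(\Keo))$. I would take this from the same quasi-mode/spectral-gap estimate used to prove \cref{thm:lam0} (presumably \cref{prop:properties quasi-mode} together with a uniform lower bound on the second eigenvalue $\lambda^1_\varepsilon$ of $\mathcal{L}_\varepsilon$). The difficulty is that $\varphi_\varepsilon$ is not in the form domain. To handle this, I would first correct $\varphi_\varepsilon$ near each hole by a cutoff at scale comparable to $\re{(k)}^{1/2}$ or $\Keo^{1/(d-2)}$, so that it satisfies the Dirichlet condition exactly. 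This correction costs an $H^1$ error of order $\mathcal{E}_d$, which explains the dimension-dependent rates. Projecting onto the first eigenvector then gives
\[
    \|\nu_\varepsilon - c\,\tilde\varphi_\varepsilon\|_{H^1} \lesssim \Keo\,\mathcal{E}_d/\lambda^1_\varepsilon .
\]
Interior elliptic regularity away from the holes upgrades this to the required bound on the support of $\nabla\chi_k$. Dividing by $\lambda_\varepsilon^0$ then yields the claimed $\Kek/\Keo + \mathrm{O}(\mathcal{E}_d(\Keo))$.
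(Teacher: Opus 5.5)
Your reduction is fine. By \cref{cor:QSD} and \cref{thm:lam0} it suffices to prove $-\partial_n\nu_\varepsilon(\Gamma_k^\varepsilon)=C_d\Kek+\mathrm{O}(\Keo\,\mathcal{E}_d(\Keo))$. The identity $-\partial_n\nu_\varepsilon(\Gamma_k^\varepsilon)=-\int\nabla\nu_\varepsilon\cdot\nabla\chi_k+\lambda_\varepsilon^0\int\nu_\varepsilon\chi_k$ holds. The formal computation with $\varphi_\varepsilon$ in place of $\nu_\varepsilon$ is also right once the bookkeeping is fixed: it reads $-\int\nabla\varphi_\varepsilon\cdot\nabla\chi_k+C_d\Keo\int\varphi_\varepsilon\chi_k=C_d|\Omega|\Kek+\mathrm{O}(\Keo^2)$, and the $\int\chi_k$ terms cancel before any normalization.

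The gap is the step you flag as the main obstacle, and that step is the whole proof. With $w=\chi_k$ you need $\|\nabla(\nu_\varepsilon-c\,\varphi_\varepsilon)\|_{L^2(\mathrm{supp}\,\nabla\chi_k)}=\mathrm{O}(\Keo\,\mathcal{E}_d(\Keo))$. That is a \emph{relative} accuracy $\mathcal{E}_d$ on a bulk gradient which is itself only of size $\Keo$. The paper contains no such estimate, and your sketch does not yield it:
\begin{itemize}
\item A cutoff correction $\rho$ enforcing the Dirichlet condition enters the residual of $\tilde\varphi_\varepsilon=\varphi_\varepsilon-\rho$, in the dual of $H^1_{0,\Gamma_\mathcal{D}^\varepsilon}(\Omega_\varepsilon)$, through $\int\nabla\rho\cdot\nabla w$. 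For a generic cutoff this is only controlled by $\|\nabla\rho\|_{L^2}$.
\item By your own count that norm is $\mathcal{E}_d$. In any case, no correction bringing a boundary mismatch of size $\mathcal{E}_d$ down to $0$ has energy norm smaller than order $\mathcal{E}_d\sqrt{\Keo}$, by the capacity of the hole.
\item Feeding this into the spectral gap gives at best $\|\nu_\varepsilon-c\,\tilde\varphi_\varepsilon\|_{H^1}=\mathrm{O}(\mathcal{E}_d\sqrt{\Keo})$, not $\mathrm{O}(\Keo\mathcal{E}_d)$. For $d\geq 5$ this even exceeds the flux you are computing.
\item A Rayleigh-quotient argument is worse: it gives $\sqrt{\Keo\,\mathcal{E}_d(\Keo)}\geq\Keo$.
\end{itemize}
So the bound $\Keo\mathcal{E}_d/\lambda_\varepsilon^1$ does not follow. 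Reaching $\Keo\mathcal{E}_d$ along your route would need a correction that is harmonic and compatible with the Neumann condition near $x^{(k)}\in\partial\Omega$, so that $\int\nabla\rho\cdot\nabla w$ reduces to an outer flux of size $\mathcal{E}_d\Keo$. It would also need control of the non-constant part of $\varphi_\varepsilon$ on $\Gamma_k^\varepsilon$. None of this is addressed.

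The missing idea is the choice of test function, and you had it in hand. You rejected $1+\Kek f_k$ because it is $\approx 0$ on $\Gamma_k^\varepsilon$, but its complement is exactly the right weight. The paper takes $\hat\varphi_{-k}^\varepsilon=1+\sum_{j\neq k}K_\varepsilon^{(j)}f_j=\varphi_\varepsilon-\Kek f_k$. This function
\begin{itemize}
\item is $1+\mathrm{O}(\mathcal{E}_d)$ on $\Gamma_k^\varepsilon$ and $\mathrm{O}(\mathcal{E}_d)$ on the other holes;
\item is smooth on $\overline{\Omega_\varepsilon}$, with $\partial_n\hat\varphi_{-k}^\varepsilon=0$ on $\Gamma_\mathcal{N}^\varepsilon$;
\item has constant Laplacian $-C_d\overline{K}_\varepsilon^{(-k)}$.
\end{itemize}
Green's formula then gives the flux with no knowledge of $\nabla\nu_\varepsilon$:
\[
\int_{\Gamma_\mathcal{D}^\varepsilon}\hat\varphi_{-k}^\varepsilon\,\mathrm{d}(\partial_n\nu_\varepsilon)=-C_d\Kek-\lambda_\varepsilon^0\bigl(\nu_\varepsilon,\hat\varphi_{-k}^\varepsilon-1\bigr)_{\Omega_\varepsilon}+\bigl(C_d\Keo-\lambda_\varepsilon^0\bigr).
\]
The remaining piece is bounded by $\bigl|\int_{\Gamma_\mathcal{D}^\varepsilon}(\mathbbm{1}_{\Gamma_k^\varepsilon}-\hat\varphi_{-k}^\varepsilon)\,\mathrm{d}(\partial_n\nu_\varepsilon)\bigr|\leq\lambda_\varepsilon^0\|\mathbbm{1}_{\Gamma_k^\varepsilon}-\hat\varphi_{-k}^\varepsilon\|_{L^\infty(\Gamma_\mathcal{D}^\varepsilon)}$, because $-\partial_n\nu_\varepsilon$ is a nonnegative measure of mass $\lambda_\varepsilon^0$.

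Every remainder is $\mathrm{O}(\Keo\,\mathcal{E}_d(\Keo))$, using only $\|\nu_\varepsilon\|_{L^2(\Omega_\varepsilon)}=\mathrm{O}(1)$, the $L^2$ bound on $\hat\varphi_{-k}^\varepsilon-1$, and \cref{thm:lam0}. The weight is integrated against a measure carried by the holes, so only its values there matter, and those are exactly what the quasi-mode controls in $L^\infty$. No spectral gap and no $H^1$ approximation of $\nu_\varepsilon$ are needed. The simpler weight $-\Kek f_k$ works the same way.
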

\begin{rema}\label{rq:exit hole distribution}
    As observed in \cite{Tony2024}, 
    this result has surprising implications. 
    Consider a domain with two holes, 
    one of radius $r_\varepsilon^{(1)} = 2 \varepsilon$ and the other of radius $r_\varepsilon^{(2)} = \varepsilon$. 
    In dimension $2$,
    the exit probabilities satisfy~$\mathbb{P}_{\nu_\varepsilon} ( \kappa_\varepsilon=i ) \to 1/2$ when 
    $\varepsilon \to 0$, 
    both for~$i = 1$ and $i = 2$.
    This property is only valid in dimension~$2$; 
    for larger dimension,
    it holds that $\lim_{\varepsilon \to 0} \mathbb{P}_{\nu_\varepsilon} \left( \kappa_\varepsilon=1 \right) > \lim_{\varepsilon \to 0} \mathbb{P}_{\nu_\varepsilon} \left( \kappa_\varepsilon=2 \right)$,
    as might intuitively be expected. 
    We illustrate this phenomenon in \cref{fig:small hole} below.

    We also note that, as pointed out in \cite{Tony2024},
    the error term in \cref{thm:exit hole distribution} can be larger than the leading order term in some regimes of the parameters.
    For instance for a domain in dimension $3$ with two holes,
    one with radius~$r_\varepsilon^{(1)} = \varepsilon$ and the other with radius $r_\varepsilon^{(2)} = \varepsilon^2$,
    \cref{thm:exit hole distribution} states that
    \[
        \mathbb{P}_{\nu_\varepsilon} \Big( \kappa_\varepsilon=2 \Big) 
        = \frac{\varepsilon^2}{\varepsilon + \varepsilon^2} + \mathrm{O}\bigl(\varepsilon |\log(\varepsilon)|\bigr)
        = \varepsilon \bigl( 1 + \mathrm{O}\bigl(|\log(\varepsilon)|\bigr) \bigr).
    \]
    In this case the error term is larger than the leading order term
    and \cref{thm:exit hole distribution} only states that~${\mathbb{P}_{\nu_\varepsilon} ( \kappa_\varepsilon=2 ) \to 0}$ when $\varepsilon \to 0$, 
    at least as fast as~$\varepsilon |\log(\varepsilon)|$,
    and thus $\mathbb{P}_{\nu_\varepsilon} ( \kappa_\varepsilon=1 )$ tends to $1$ in this limit.
\end{rema}

This work is organized as follows. 
The proofs of \cref{thm:lam0} and \cref{thm:exit hole distribution} are presented in 
\cref{sec:quasi-mode} after having constructed the functions~$f_k$, solutions to \eqref{eq:fkepd}, in \cref{sec:construction fk}.
\Cref{sec:numerics} is dedicated to numerical illustrations of our results. 
Finally, 
\cref{sec:app eigenvalues} contains results on the spectrum of operators with mixed boundary conditions,
and \cref{sec:app QSD} is devoted to the proofs of \cref{thm:QSD} and~\cref{cor:QSD}.

\section{Construction and local expansion of the quasi-mode}
\label{sec:construction fk}
The goal of this section is to construct the solution of~\eqref{eq:fkepd}.
Fix $k \in \{ 1, \ldots, N \}$. 
We show that $f_k$ can be decomposed close to the hole centered around $\xk$ as
\[
    f_k = \widetilde{\Lambda}_k + S_k - \frac{1}{|\Omega|} \int_{\Omega} \left( \widetilde{\Lambda}_k + S_k \right),
\]
where $\widetilde{\Lambda}_k$ is the composition of the fundamental solution of the Laplacian on $\real^d$ (see \eqref{eq:fundamental solution everywhere})
with an appropriate smooth diffeomorphism to account for the geometry of the domain,
and~$S_k = \mathrm{o}(\widetilde{\Lambda}_k)$ is a term subsingular in the limit $x \to x^{(k)}$.

Before providing the details of the construction,
we give a rigorous sense to the Neumann problem~\eqref{eq:fkepd} in the next subsection.
Then, we compute an explicit expression for $f_k$ itself in three steps: 
we first establish the relevant geometric properties of $\Omega$
in~\cref{sec:boundary_flattening},                                                                                                                                                            
then construct a solution to~\eqref{eq:fkepd} term by term, handling first the most singular terms $\widetilde{\Lambda}_k$ in~\cref{sec:fundamental sol} 
before turning to the subsingular ones $S_k$ in~\cref{sec:subsingular_terms}.
Finally, we conclude by providing local asymptotics and other estimates on $f_k$ in~\cref{sec:actual construction fk}.

\subsection{\texorpdfstring{A rigorous sense to the Neumann problem \eqref{eq:fkepd}}{A rigorous sense to the Neumann problem}}
Note that there cannot exist a classical solution to~\eqref{eq:fkepd}.
Indeed, 
if there were,
an application of the divergence theorem would yield the following contradiction:
\begin{equation}\label{eq:compat}
    - |\Omega| C_d = \int_\Omega \Delta f_k = \int_{\partial \Omega} \partial_n f_k = 0.
\end{equation}
Hence, 
we need a distributional interpretation of \eqref{eq:fkepd}. 
For this, 
we use the~$W^{1, p}$ framework, 
considered in particular in \cite{aramaki2018existence}.
For $p>1$, 
given two distributions $g \in L^p(\Omega)$ and~$h \in W^{-\frac{1}{p}, p}(\partial \Omega)$,
we define a weak solution to 
\begin{equation}
    \label{eq:generic_neumann}
    \left \{
    \begin{aligned}
        -\Delta f & = g
        &&\text{ in }  \Omega,\\
        \partial_n f & = h &&\text{ on } \partial \Omega,
    \end{aligned}
    \right .
\end{equation}
as a function $f \in W^{1, p}(\Omega)$ such that, for any $v \in C^1(\overline{\Omega})$,
\begin{equation}\label{eq:weak formulation}
    \int_\Omega \nabla f \cdot \nabla v = \int_\Omega g v + \langle h, v|_{\partial \Omega} \rangle_{W^{-\frac{1}{p}, p}(\partial \Omega), W^{\frac{1}{p}, p'}(\partial \Omega)}
\end{equation}
where $p'$ is defined by $1/p + 1/p' = 1$.
The bracket $\langle \cdot, \cdot \rangle_{W^{-\frac{1}{p}, p}(\partial \Omega), W^{\frac{1}{p}, p'}(\partial \Omega)}$
denotes the duality pairing between $W^{-\frac{1}{p}, p}(\partial \Omega)$ and $W^{\frac{1}{p}, p'}(\partial \Omega)$.
In a similar fashion to \eqref{eq:normal_derivative_weak_h12}, 
the meaning of the normal derivative we will use in the rest of the paper is
the following: for any~${u \in W^{\frac{1}{p}, p'}(\partial \Omega)}$ and~$f \in W^{1, p}(\Omega)$ with~$\Delta f \in L^p(\Omega)$,
\begin{equation}\label{eq:normal_}
    \langle \partial_n f, u \rangle_{W^{-\frac{1}{p}, p}(\partial \Omega), W^{\frac{1}{p}, p'}(\partial \Omega)} 
    = \int_\Omega \nabla f \cdot \nabla v + \int_\Omega (\Delta f) v
\end{equation}
where $v \in W^{1, p'}(\Omega)$ denotes any extension of $u$ to $\Omega$,
the existence of which is granted by the surjectivity of the trace operator in these spaces (see \cite[Theorem 1.5.1.3]{Grisvard}).
The duality defined in \eqref{eq:normal_} is independent of the choice of the extension $v$.

Let us state an existence and uniqueness result from \cite[Proposition 2.3]{aramaki2018existence} that will be useful in the rest of the paper.
\begin{prop}
    \label{prop:sol_a_la_aramaki}
    There exists a unique, up to an additive constant, solution to \eqref{eq:generic_neumann} 
    in the sense of~\eqref{eq:weak formulation}
    if and only if the following compatibility condition holds:
    \begin{equation}\label{eq:compat_w1p}
        \int_\Omega g + \langle h, 1 \rangle_{W^{-\frac{1}{p}, p}(\partial \Omega), W^{\frac{1}{p}, p'}(\partial \Omega)} = 0\, .
    \end{equation}
\end{prop}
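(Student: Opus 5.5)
This statement is quoted from \cite[Proposition 2.3]{aramaki2018existence}. If we wanted a self-contained argument, we would split it into three parts: necessity of the compatibility condition, uniqueness up to constants, and existence. The last part is the real content.

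\emph{Necessity} is immediate. Take $v \equiv 1 \in C^1(\overline{\Omega})$ in the weak formulation \eqref{eq:weak formulation}. The left-hand side vanishes, which gives exactly \eqref{eq:compat_w1p}.

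\emph{Uniqueness} reduces to showing the following: if $f \in W^{1,p}(\Omega)$ satisfies $\int_\Omega \nabla f \cdot \nabla v = 0$ for all $v \in C^1(\overline{\Omega})$, then $f$ is constant.
\begin{itemize}
\item For $p \geq 2$ this is direct. Since $\Omega$ is bounded, $f \in H^1(\Omega)$. By density of $C^1(\overline{\Omega})$ in $H^1(\Omega)$ (smooth boundary), we may take $v = f$, so $\nabla f = 0$. As $\Omega$ is connected, $f$ is constant.
\item For $1 < p < 2$ we argue by duality. Given $\psi \in C_c^\infty(\Omega)$ with zero mean, solve the Neumann problem $-\Delta w = \psi$, $\partial_n w = 0$ in the classical way. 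Elliptic regularity on a smooth domain gives $w \in C^\infty(\overline{\Omega})$. Integrating by parts, $\int_\Omega f \psi = \int_\Omega \nabla f \cdot \nabla w = 0$. Since this holds for every mean-zero $\psi$, $f$ is constant.
\end{itemize}

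\emph{Existence} is the main obstacle. We would establish the a priori estimate
\[
\| f - \bar f \|_{W^{1,p}(\Omega)} \le C \bigl( \|g\|_{L^p(\Omega)} + \|h\|_{W^{-\frac1p,p}(\partial\Omega)} \bigr),
\]
where $\bar f$ denotes the mean of $f$, and then approximate the data.

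The natural route to the estimate is the $L^p$ regularity theory for the Neumann problem on smooth domains, via Agmon--Douglis--Nirenberg estimates. Equivalently, it follows from the inf-sup condition
\[
\sup_{v \in W^{1,p'}} \frac{\int_\Omega \nabla u \cdot \nabla v}{\|v\|_{W^{1,p'}}} \ge c\, \|u - \bar u\|_{W^{1,p}},
\]
which one proves by a duality argument using $W^{2,q}$ regularity of the classical Neumann problem for $q = p$ and $q = p'$.

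With the estimate in hand, we approximate the data:
\begin{itemize}
\item take $g_n \in C^\infty(\overline{\Omega})$ and $h_n \in C^\infty(\partial\Omega)$ converging to $g$ in $L^p$ and to $h$ in $W^{-\frac1p,p}$;
\item adjust each pair by a constant so that \eqref{eq:compat_w1p} holds, which is still allowed since constants are smooth;
\item solve the classical Neumann problem for each pair;
\item use the estimate to show that the mean-zero solutions form a Cauchy sequence in $W^{1,p}$;
\item pass to the limit in \eqref{eq:weak formulation}, which is continuous in all arguments because the trace map sends $C^1(\overline{\Omega})$ into $W^{\frac1p,p'}(\partial\Omega)$.
\end{itemize}

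The delicate point is the inf-sup estimate for $p \neq 2$. We would simply invoke the cited reference, or Grisvard's $L^p$ theory, rather than reprove it.
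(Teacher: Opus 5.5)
Your proposal matches the paper, which gives no proof of its own and simply quotes \cite[Proposition 2.3]{aramaki2018existence}, as you ultimately do; your reductions (necessity by testing with $v \equiv 1$, uniqueness by duality against smooth Neumann solutions, existence from an a priori estimate plus smoothing of the data) are sound. One caveat on your aside about proving the inf-sup: duality against the $W^{2,q}$ Neumann estimates by itself only controls $\|u - \bar u\|_{L^p}$, not $\|\nabla u\|_{L^p}$, so you would also need either an interpolation step (between the $W^{2,p}$ estimate and the very-weak $L^p$ estimate obtained by transposing the $W^{2,p'}$ one) or a direct Calder\'on--Zygmund localization argument, and deferring this to the reference is the right call.
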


We adopt this $W^{1,p}$ functional framework throughout the rest of the paper.
With this in mind, 
we now give a rigorous sense of solution to \eqref{eq:fkepd} in the following definition.
\begin{defi}
    \label{def:fk}
    We denote by $f_k$ the zero-mean solution of \eqref{eq:generic_neumann} in the sense of \eqref{eq:weak formulation},
    with $g = C_d$ and~${h = -C_d |\Omega| \delta_{\xk}}$, 
    where $\delta_{\xk}$ is the Dirac mass at $\xk$ and $C_d$ is the positive constant defined in~\eqref{eq:def_Cd}:
    \begin{equation}\label{eq:fkepd distributional}
        \left \{
        \begin{aligned}
            -\Delta f_k & = C_d
            &&\text{ in }  \Omega,\\
            \partial_n f_k & = -C_d |\Omega| \delta_{\xk}&&\text{ on } \partial \Omega.
        \end{aligned}
        \right .
    \end{equation}
\end{defi}
Notice first that $\delta_{\xk} \in W^{-\frac{1}{p}, p}(\partial \Omega)$ for any $p \in (1, d/(d-1))$, 
as $\left( W^{-\frac{1}{p}, p}(\partial \Omega)\right)' \hookrightarrow C^0(\partial \Omega)$ for such $p$.
Secondly, since $\int_\Omega g = C_d |\Omega|$ and $\langle h, 1 \rangle_{W^{-\frac{1}{p}, p}(\partial \Omega), W^{\frac{1}{p}, p'}(\partial \Omega)}  = -C_d |\Omega|$,
the compatibility condition~\eqref{eq:compat_w1p} is satisfied
and~\cref{prop:sol_a_la_aramaki} ensures that $f_k$ is unique and well-defined in $W^{1, p}(\Omega)$ for any $p \in (1, d/(d-1))$. 

The goal of the rest of the section is to build explicitly another solution to \eqref{eq:fkepd distributional}
and use the uniqueness result from \cref{prop:sol_a_la_aramaki} to conclude that it is indeed $f_k$.

\subsection{Local flattening of the boundary}
\label{sec:boundary_flattening}
The goal of this subsection is to construct a smooth local change of variables near~$\xk$
that maps the boundary $\partial\Omega$ to a hyperplane (see \cref{fig:psi})
that is central in the following analysis.

\begin{lem}\label{lem:variable change}
    For $x\in\partial\Omega$, 
    let $n(x)$ denote the unit outward normal to $\partial\Omega$ at $x$, 
    and write $n_k := n(\xk)$.
    There exist open sets $U_k, V_k \subset \real^d$ with $\xk \in U_k$ and $0 \in V_k$,
    and a $\mathcal{C}^\infty$ diffeomorphism $\Psi_k \colon U_k \to V_k$ satisfying:
    \begin{enumerate}[label=\textbf{(\alph*)}]
        \item \label{item:variable_change_1}
            $\Psi_k\bigl(\partial\Omega \cap U_k\bigr) = n_k^\perp \cap V_k$,
        \item \label{item:variable_change_2}
            $\Psi_k\bigl(\Omega \cap U_k\bigr) = \bigl\{y \in V_k : y \cdot n_k < 0\bigr\}$,
        \item \label{item:variable_change_3}
            $\Psi_k(\xk) = 0$
            \quad and \quad
            $\nabla\Psi_k(\xk) = \I_d$,
        \item \label{item:variable_change_4}
            $\nabla\Psi_k(x)\, n(x) = n_k$
            \quad for all $x \in \partial\Omega \cap U_k$.
    \end{enumerate}
\end{lem}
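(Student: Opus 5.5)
The plan is to build $\Psi_k$ as the inverse of an explicit map written in tubular-type coordinates adapted to $\partial\Omega$, rather than constructing $\Psi_k$ directly.

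\textbf{Step 1: a graph parametrization.} Since $\partial\Omega$ is smooth, there is a neighbourhood of $\xk$ in which $\partial\Omega$ is a graph over the tangent hyperplane. Precisely, there exist $\rho>0$ and a smooth $g\colon n_k^\perp\cap B(0,\rho)\to\real$ with $g(0)=0$ and $\nabla g(0)=0$ such that, near $\xk$,
\[
\partial\Omega=\{\xk+z+g(z)n_k\},
\]
with $\Omega$ lying on the side $\{s<g(z)\}$ in the coordinates $x=\xk+z+s\,n_k$. The point $\xk+z+g(z)n_k$ then parametrizes $\partial\Omega$, and the outward normal there, $n(z)$, depends smoothly on $z$ and satisfies $n(0)=n_k$.

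\textbf{Step 2: the map $\Phi$.} Define
\[
\Phi(z,t)=\xk+z+g(z)n_k+t\,n(z),\qquad z\in n_k^\perp,\ t\in\real.
\]
Writing $y=z+t\,n_k$, this is a smooth map of $y$ in a neighbourhood of $0$. Its differential at $0$ is the identity: $\partial_z\Phi=\mathrm{I}+n_k\otimes\nabla g+t\,\partial_z n$ reduces to $\mathrm{I}$ on $n_k^\perp$ at $(0,0)$, and $\partial_t\Phi=n(0)=n_k$. By the inverse function theorem, $\Phi$ is a diffeomorphism from a small neighbourhood $V_k$ of $0$ onto a neighbourhood $U_k$ of $\xk$. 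Set $\Psi_k=\Phi^{-1}$.

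\textbf{Step 3: checking the four properties.}
\begin{itemize}
\item Property (c) holds because $\Phi(0)=\xk$ and $\nabla\Phi(0)=\mathrm{I}_d$.
\item Property (a) holds because $\Phi(\{t=0\})$ is exactly the graph of $g$, i.e. $\partial\Omega\cap U_k$. Injectivity of $\Phi$ on $V_k$ gives equality of the two sets, not just an inclusion.
\item Property (b) follows from the sign of $t$. Shrink $V_k$ to a product $B'\times(-\eta,\eta)$ with $B'$ and $\eta$ small. Moving along $-n(z)$ for small positive time enters $\Omega$, because $n$ is the outward normal and $n\cdot n_k>0$ near $\xk$. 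Hence $\{t<0\}$ maps into $\Omega$ and $\{t>0\}$ maps into the complement. Since $U_k\setminus\partial\Omega$ is then split by $\Phi$ exactly into $\Phi(\{t<0\})$ and $\Phi(\{t>0\})$, we get $\Psi_k(\Omega\cap U_k)=\{y\cdot n_k<0\}\cap V_k$.
\item Property (d) holds because $\partial_t\Phi(z,0)=n(z)$, so $\nabla\Phi(z,0)\,n_k=n(z)$. Inverting, $\nabla\Psi_k(x)\,n(x)=n_k$ at every boundary point $x=\Phi(z,0)$.
\end{itemize}

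\textbf{Main obstacle.} I expect the only delicate point to be the shrinking argument in (a) and (b). The sets $U_k$ and $V_k$ must be chosen so that $\partial\Omega\cap U_k$ contains no pieces of the boundary other than the graph. This is ensured by taking $U_k=\Phi(V_k)$ inside the neighbourhood where the graph representation of Step 1 holds.
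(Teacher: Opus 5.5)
Your proposal is correct and follows essentially the same route as the paper: a graph parametrization of $\partial\Omega$ near $x^{(k)}$, the normal-offset map $\Phi_k(\xi,t)=\psi_k(\xi)+t\,n(\psi_k(\xi))$, $\Psi_k=\Phi_k^{-1}$, and the same verification of (a)--(d), with (d) read off from $\partial_t\Phi_k(\xi,0)=n(\psi_k(\xi))$. The differences are only cosmetic (you use the inverse function theorem at $0$ instead of the tubular neighbourhood theorem, and you work intrinsically in $n_k^\perp\oplus\mathbb{R}n_k$ instead of rotating to $n_k=e_d$), and your shrinking and sign-of-$t$ arguments for (a) and (b) are, if anything, more explicit than the paper's.
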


\begin{figure}
    \centering
    \colorlet{axcol}{black!50}

\begin{tikzpicture}

%% ─── LEFT PLOT ───────────────────────────────────────────────
\begin{scope}[local bounding box=Lplot]

  \draw[->, axcol, semithick] (-2.8, 0) -- (3.5, 0);
  \draw[->, axcol, semithick] (0, -2.3) -- (0, 3.2);

  \draw[line width=1.8pt, color={rgb,255:red,31;green,119;blue,180}]
      (2.333, -2.255)
      .. controls (1.713, -1.363) and (1.067, -0.175) ..
      (0.471, 0.421)
      .. controls (-0.125, 1.017) and (-1.313, 1.563) ..
      (-2.385, 1.921);

  \node[black] at (-0.2, -0.6) {$\Omega$};
  \node[black] at (2.1, -1.3) {$\partial \Omega$};

  \coordinate (xk) at (0.471, 0.421);
  \fill[black] (xk) circle (2.5pt);
  \node[black, right=4pt] at (xk) {$x^{(k)}$};

    \node[black] at (0.0, 3.8) {$U_k$};

  \draw[->, black, thick] (xk) -- ++(0.707, 0.707)
      node[right=1pt] {$n_k$};

\end{scope}

%% ─── MAP ARROW Psi_k ─────────────────────────────────────────
\draw[->, black, thick]
    (3.8, 0.5) -- (5.2, 0.5) node[above, midway] {$\Psi_k$};

\draw[->, black, thick]
    (5.2, -0.5) -- (3.8, -0.5) node[above, midway] {$\Phi_k$};

%% ─── RIGHT PLOT ──────────────────────────────────────────────
\begin{scope}[shift={(8.3, 0)}]

  \draw[->, axcol, semithick] (-2.8, 0) -- (3.5, 0);
  \draw[->, axcol, semithick] (0, -2.3) -- (0, 3.2);

  \draw[line width=1.8pt, color={rgb,255:red,31;green,119;blue,180}]
      (-2.3, 2.3) -- (2.3, -2.3);
      \node[black] at (2.05, -1.35) {$n_k^\perp$};
    \node[black] at (0.0, 3.8) {$V_k$};

  \coordinate (ptR) at (0.0, 0.0);
  \draw[->, black, thick] (ptR) -- ++(0.707, 0.707)
      node[right=1pt] {$n_k$};
    \coordinate (zero) at (0.0, 0.0);
  \fill[black] (zero) circle (2.5pt);
  \node[black, right=4pt] at (zero) {$0$};
\end{scope}

\end{tikzpicture}
    \caption{Visualization of the boundary-flattening diffeomorphism $\Psi_k$ from~\cref{lem:variable change}.
    Its inverse $\Phi_k$ is given by~\eqref{eq:def Psi}.
    This figure represents the general case where $\xk$ is not at the origin and $n_k$ is not aligned with one of the coordinate axes.}
    \label{fig:psi}
\end{figure}

\begin{proof}
We first construct $\Psi_k$, and then prove the four properties in order.

\paragraph{Construction.}
Without loss of generality, upon translating and rotating coordinates, we assume
\[
    \xk = 0
    \qquad\text{and}\qquad
    n_k = e_d \coloneq (0, \dotsc, 0, 1)^\t.
\]
Since $\Omega$ is a $\mathcal{C}^\infty$ domain, there exist an open set $\widetilde{V}_k \subset n_k^\perp = \real^{d-1}$ containing $x^{(k)} = 0$,
a $\zeta_k > 0$
and a function $h_k \colon \widetilde{V}_k \to \real$ of class $\mathcal{C}^\infty$ such that
$\partial\Omega$ is locally the graph of $h_k$,
and $\Omega$ lies below it:
\begin{equation}\label{eq:local description omega}
    \Omega \cap \left(\widetilde{V}_k \times (-\zeta_k, \zeta_k) \right)
    = \bigl\{(\xi, x_d) \in \widetilde{V}_k \times (-\zeta_k, \zeta_k) \;\bigm|\; x_d < h_k(\xi)\bigr\}.
\end{equation}
Since $\xk = 0 \in \partial\Omega$ and $n_k = e_d$ is orthogonal to the tangent space
$T_0(\partial\Omega) = \bigl\{(\xi,\, \nabla h_k(0) \cdot \xi) : \xi \in \real^{d-1}\bigr\}$,
we have $h_k(0) = 0$ and $\nabla h_k(0) = 0$.
A Taylor expansion then gives, in the limit $\xi \to 0$,
\begin{equation}\label{eq:expansion h}
    h_k(\xi) = \displaystyle \frac{1}{2}\xi^\t \nabla^2 h_k(0)\,\xi + \mathrm{O}(|\xi|^3),
    \quad
    \nabla h_k(\xi) = \nabla^2 h_k(0)\,\xi + \mathrm{O}(|\xi|^2),
    \quad
    \nabla^2 h_k(\xi) = \nabla^2 h_k(0) + \mathrm{O}(|\xi|).
\end{equation}
We define the boundary parametrization $\psi_k \colon \widetilde{V}_k \to \partial\Omega$ by
\begin{equation}\label{eq:phi explicit}
    \psi_k(\xi) \coloneq \bigl(\xi,\, h_k(\xi)\bigr),
\end{equation}
and the map $\Phi_k \colon \widetilde{V}_k \times \real \to \real^d$ by
\begin{equation}\label{eq:def Psi}
    \Phi_k(\xi, t) \coloneq \psi_k(\xi) + t\, n\bigl(\psi_k(\xi)\bigr).
\end{equation}
See~\cref{fig:psi} for a visual representation.
Note that $\Phi_k(0, 0) = \psi_k(0) = 0 = \xk$.
By the tubular neighborhood theorem~\cite[Section~6]{IntroManifolds},
upon possibly restricting $\widetilde{V}_k$ and taking $\zeta_k > 0$ sufficiently small,
the restriction of $\Phi_k$ to $V_k \coloneq \widetilde{V}_k \times (-\zeta_k, \zeta_k)$
is a $\mathcal{C}^\infty$ diffeomorphism onto the open neighborhood $U_k \coloneq \Phi_k(V_k)$ of~$\xk$.
Let us show that 
\[
    \Psi_k = \Phi_k^{-1} \colon U_k \to V_k
\]
verifies all the required properties.
First notice that, by construction,
the function~$\Psi_k$ is a smooth diffeomorphism and $\Psi_k(\xk) = 0$.

\paragraph{Properties~\ref{item:variable_change_1} and~\ref{item:variable_change_2}.}
For any $x \in U_k$, we have $\Psi_k(x) = (\xi, t) \in V_k = \widetilde{V}_k \times (-\zeta_k, \zeta_k)$.
In particular, $\Psi_k(x) \cdot n_k = (\xi, t)^\top \cdot e_d = t$.
By~\eqref{eq:def Psi} and the local description~\eqref{eq:local description omega},
$x$ belongs to $\partial\Omega$ (respectively $\Omega$) if and only if $t = 0$ (resp.\ $t < 0$).
Hence $\Psi_k$ satisfies both properties~\ref{item:variable_change_1} and~\ref{item:variable_change_2}.

\paragraph{Jacobian of $\Phi_k$ and property~\ref{item:variable_change_3}.}
On $V_k$, $\nabla = (\partial_{\xi_1}, \dotsc, \partial_{\xi_{d-1}}, \partial_t)^\top$
and differentiating~\eqref{eq:def Psi}, we obtain
\begin{equation}\label{eq:jac Psi chain}
    \nabla\Phi_k(\xi, t)
    =
    \begin{pmatrix}
        \nabla\psi_k(\xi) & \0_{d \times 1}
    \end{pmatrix}
    +
    \begin{pmatrix}
        t\,\nabla(n \circ \psi_k)(\xi) & n\bigl(\psi_k(\xi)\bigr)
    \end{pmatrix}.
\end{equation}
From~\eqref{eq:phi explicit}, the Jacobian of $\psi_k$ is
\begin{equation}\label{eq:jac1}
    \nabla\psi_k(\xi) =
    \begin{pmatrix}
        \I_{d-1} \\
        \nabla h_k(\xi)^\t
    \end{pmatrix},
\end{equation}
and the definition of the outward unit normal yields
\begin{equation}\label{eq:jac2}
    (n \circ \psi_k)(\xi) =
    \frac{1}{\sqrt{1 + \lvert \nabla h_k(\xi) \rvert^2}}
    \begin{pmatrix}
        -\nabla h_k(\xi) \\
        1
    \end{pmatrix}.
\end{equation}
Differentiating~\eqref{eq:jac2} with respect to $\xi_j$ yields, for each $j \in \{1, \dotsc, d-1\}$,
\begin{equation}\label{eq:jac3}
    \partial_{\xi_j}(n \circ \psi_k)(\xi)
    =
    \frac{1}{\sqrt{1 + \lvert \nabla h_k(\xi) \rvert^2}}
    \begin{pmatrix}
        -\partial_{\xi_j} \nabla h_k(\xi) \\
        0
    \end{pmatrix}
    -
    \frac{\partial_{\xi_j} \nabla h_k(\xi) \cdot \nabla h_k(\xi)}{\bigl(1 + \lvert \nabla h_k(\xi) \rvert^2\bigr)^{3/2}}
    \begin{pmatrix}
        -\nabla h_k(\xi) \\
        1
    \end{pmatrix}.
\end{equation}
Substituting~\eqref{eq:jac1}--\eqref{eq:jac3} into~\eqref{eq:jac Psi chain}
and evaluating at $(\xi, t) = (0, 0)$,
using $h_k(0) = 0$ and $\nabla h_k(0) = 0$ from~\eqref{eq:expansion h},
we obtain:
\begin{equation}\label{eq:jac Psi at 0}
    \nabla\Phi_k(0, 0) = \I_d.
\end{equation}
By the inverse function theorem,
\begin{equation}\label{eq:connection_jacobian_inverse}
    \nabla\Psi_k(\xk)
    = \bigl(\nabla\Phi_k\bigl(\Psi_k(\xk)\bigr)\bigr)^{-1}
    = \bigl(\nabla\Phi_k\bigl(0\bigr)\bigr)^{-1}
    = \I_d,
\end{equation}
establishing property~\ref{item:variable_change_3}.

\paragraph{Property~\ref{item:variable_change_4}.} Let $x \in \partial\Omega \cap U_k$
and let $\xi \in \widetilde{V}_k$ be such that $\Psi_k(x) = (\xi, 0)$.
Evaluating~\eqref{eq:jac Psi chain} at $t = 0$,
the last column of $\nabla\Phi_k(\xi, 0)$ equals $n(\psi_k(\xi)) = n(x)$, i.e.,
\[
    \nabla\Phi_k(\xi, 0)\, e_d = n(x).
\]
Applying $\nabla\Psi_k(x) = \bigl(\nabla\Phi_k(\xi, 0)\bigr)^{-1}$ gives
\[
    \nabla\Psi_k(x)\, n(x) = e_d = n_k,
\]
which is property~\ref{item:variable_change_4}.
\end{proof}

The following corollary records the Taylor expansion of $\Psi_k$ near $\xk$,
which will be used throughout this section.

\begin{cor}\label{lem:inverse psi}
    As $x \to \xk$, the following estimates hold:
    \begin{align*}
        \Psi_k(x) &= x - \xk + \mathrm{O}\left(\left\lvert x - \xk \right\rvert^2\right),\\
        \nabla\Psi_k(x) &= \I_d + \mathrm{O}\left(\left\lvert x - \xk \right\rvert\right),\\
        \Delta\Psi_k(x) &= \mathrm{O}(1),
    \end{align*}
    where $\Delta$ denotes the componentwise Laplacian.
\end{cor}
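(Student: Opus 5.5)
The three estimates follow from Taylor's theorem applied to the smooth map $\Psi_k$ at $\xk$, using the normalization in \cref{lem:variable change}\ref{item:variable_change_3}. The only point needing care is that the constants in the $\mathrm{O}$ terms be uniform. To ensure this, I will first shrink $U_k$ to a closed ball $\overline{B(\xk,\rho)} \subset U_k$ for some $\rho>0$. Since $\Psi_k$ is $\mathcal{C}^\infty$ on $U_k$, all its derivatives up to order three are bounded on this compact ball. The statement is local (as $x \to \xk$), so working on such a ball suffices.

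For the first estimate, I apply Taylor's formula with integral remainder along the segment $[\xk, x]$, which lies in the ball by convexity:
\[
\Psi_k(x) = \Psi_k(\xk) + \nabla\Psi_k(\xk)(x-\xk) + \int_0^1 (1-s)\, \nabla^2\Psi_k\bigl(\xk + s(x-\xk)\bigr)[x-\xk,x-\xk]\,\mathrm{d}s .
\]
Using $\Psi_k(\xk)=0$ and $\nabla\Psi_k(\xk)=\I_d$ from \cref{lem:variable change}\ref{item:variable_change_3}, the first two terms reduce to $x-\xk$. The remainder is bounded by $\tfrac12 \sup_{\overline{B(\xk,\rho)}}|\nabla^2\Psi_k|\,|x-\xk|^2$.

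For the second estimate, I apply the mean value inequality to $\nabla\Psi_k$. This gives $|\nabla\Psi_k(x)-\nabla\Psi_k(\xk)| \le \sup_{\overline{B(\xk,\rho)}}|\nabla^2\Psi_k|\,|x-\xk|$, and since $\nabla\Psi_k(\xk)=\I_d$, this is the claim.

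For the third estimate, each component $\Delta(\Psi_k)_i$ is a linear combination of second derivatives of $\Psi_k$. It is therefore continuous on the compact ball and hence bounded there.

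I expect no real obstacle here. The only subtlety is that $U_k$ need not be convex or have compact closure, which is exactly why I work on a closed ball inside it. Alternatively, I could derive the expansions from those of $\Phi_k$, namely \eqref{eq:jac Psi at 0} together with the inverse function theorem, but the direct Taylor argument on $\Psi_k$ is simpler.
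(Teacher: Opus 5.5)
Your proposal is correct and matches the paper's proof: a Taylor expansion of the smooth map $\Psi_k$ at $x^{(k)}$ using $\Psi_k(x^{(k)})=0$ and $\nabla\Psi_k(x^{(k)})=\I_d$, the same argument one order lower for $\nabla\Psi_k$, and boundedness of the second derivatives for $\Delta\Psi_k$. Restricting to a closed ball $\overline{B(x^{(k)},\rho)}\subset U_k$ is a slightly more careful version of the paper's appeal to smoothness on $\overline{U_k}$, and it makes explicit that the constants in the $\mathrm{O}$ terms are uniform.
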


\begin{proof}
    By~\cref{item:variable_change_3} of~\cref{lem:variable change},
    $\Psi_k(\xk) = 0$ and $\nabla\Psi_k(\xk) = \I_d$.
    A Taylor expansion of the smooth map $\Psi_k$ around $\xk$ gives
    \[
        \Psi_k(x)
        = \nabla\Psi_k(\xk)\,(x - \xk) + \mathrm{O}\left(\left\lvert x - \xk \right\rvert^2\right)
        = x - \xk + \mathrm{O}\left(\left\lvert x - \xk \right\rvert^2\right),
    \]
    proving the first estimate. The second point is obtained with the same arguments, 
    using the first order expansion of $\nabla\Psi_k$.
    The third point holds since $\Delta\Psi_k$ is smooth on $\overline{U_k}$, hence bounded.
\end{proof}
%%%%%%%%%%%%%%%%%%%%%%%%%%%%%%%%%%%%%%%%%%%%%%%%%%%%%%%
\subsection{\texorpdfstring{The leading term $\widetilde{\Lambda}_k$}{The leading term: Lambda tilde k}}
\label{sec:fundamental sol}
The main building block to construct an explicit expression of $f_k$, the solution of \eqref{eq:fkepd distributional}, is the fundamental solution of the Laplacian:
\begin{equation}\label{eq:fundamental_solution}
    \Lambda \colon \left|\, \, \begin{aligned}
    V_k\setminus\{0\}  & \to \real\\
    y & \mapsto \left\{ \begin{aligned}
        &\log|y| \, && \text{ for } d = 2,\\
        &-|y|^{2-d} \, && \text{ for } d \geq 3.
    \end{aligned}
    \right.
    \end{aligned}
    \right.
\end{equation}
The absence of the usual constant of normalization and the sign in this definition are intentional,
as it provides the correct multiplicative constant on $f_k$ to be a solution of \eqref{eq:fkepd distributional}
as seen later in \cref{sec:actual construction fk}.
To build~$f_k$ on~$\Omega$,
we first introduce the following intermediate function:
\begin{equation}\label{eq:fundamental solution on the tube}
    \widehat{\Lambda}_k \colon 
    \left\{ \begin{aligned} 
        U_k \cap \Omega  & \to \real\\
        x & \mapsto \Lambda \circ \Psi_k(x).
    \end{aligned}
    \right. 
\end{equation}
The change of variable $\Psi_k$ ensures that the normal derivative of $\widehat{\Lambda}_k$ is $0$ for any boundary point $x \in \partial \Omega$ around the hole,
as made precise in the following lemma.
\begin{lem}
    \label{lem:normal_derivative_fundamental}
    The normal derivative of $\widehat{\Lambda}_k$ defined in \eqref{eq:fundamental solution on the tube} is $0$
    on $\bigl( U_k \cap \partial \Omega \bigr) \setminus \{ x^{(k)}\}$.
\end{lem}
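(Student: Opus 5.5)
The plan is to compute the gradient of $\widehat{\Lambda}_k$ directly with the chain rule and use property~\ref{item:variable_change_4} of \cref{lem:variable change}. Fix $x \in (U_k \cap \partial\Omega) \setminus \{\xk\}$. Since $\Psi_k$ is a diffeomorphism with $\Psi_k(\xk) = 0$, we have $y := \Psi_k(x) \neq 0$. Hence $\Lambda$ is smooth near $y$ and $\widehat{\Lambda}_k$ is smooth up to the boundary in a neighborhood of $x$, so the normal derivative can be taken in the classical sense. In particular, the weak definition~\eqref{eq:normal_} coincides with the pointwise one when tested against functions supported away from $\xk$.

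By the chain rule,
\[
    \nabla \widehat{\Lambda}_k(x) = \nabla\Psi_k(x)^\top \nabla\Lambda(y),
\]
so that
\[
    \partial_n \widehat{\Lambda}_k(x) = n(x)\cdot \nabla\Psi_k(x)^\top \nabla\Lambda(y) = \bigl(\nabla\Psi_k(x)\, n(x)\bigr)\cdot \nabla\Lambda(y).
\]
Property~\ref{item:variable_change_4} gives $\nabla\Psi_k(x)\,n(x) = n_k$, hence $\partial_n \widehat{\Lambda}_k(x) = n_k \cdot \nabla\Lambda(y)$.

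Next I use the radial structure of $\Lambda$. In every dimension, $\nabla\Lambda(y) = c_d\, y/|y|^{d}$ with $c_d = \max\{d-2,1\} > 0$: for $d=2$ this is $y/|y|^2$, and for $d\ge 3$ it is $(d-2)y/|y|^d$. Therefore $n_k\cdot\nabla\Lambda(y) = c_d\,(y\cdot n_k)/|y|^d$. By property~\ref{item:variable_change_1}, $y = \Psi_k(x) \in n_k^\perp$, so $y\cdot n_k = 0$ and the normal derivative vanishes.

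The only delicate point is bookkeeping on the Jacobian convention. Here $\nabla\Psi_k$ is the Jacobian matrix, with rows equal to the gradients of the components, so the gradient of the composition is $\nabla\Psi_k^\top\nabla\Lambda$. This is consistent with how property~\ref{item:variable_change_4} was derived, namely from the last column of $\nabla\Phi_k$. I also need to make sure the statement is understood pointwise away from $\xk$, where everything is smooth. No genuine obstacle is expected.
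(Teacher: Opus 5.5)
Your proof is correct and follows the paper's argument step for step: the chain rule together with property~(d) of \cref{lem:variable change} reduces $\partial_n \widehat{\Lambda}_k(x)$ to $n_k \cdot \nabla\Lambda(y)$, and the radial form $\nabla\Lambda(y) = \max\{d-2,1\}\,|y|^{-d} y$ combined with $y = \Psi_k(x) \in n_k^\perp$ from property~(a) gives zero. Your additional remarks, that $y \neq 0$ by injectivity of $\Psi_k$ and that the Jacobian convention is consistent with how property~(d) was derived, are accurate and make explicit what the paper uses implicitly.
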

\begin{proof}
    We begin by calculating the gradient and Hessian matrix of~$\Lambda$.
    For any $y \in V_k \setminus\{0\}$, we have
    \begin{equation}
        \label{eq:grad gamma}
        \nabla \Lambda(y) = A_d |y|^{-d} y,
        \quad\quad\quad\quad \left[ \nabla^2\, \Lambda(y) \right]_{ij}=
        \delta_{ij} A_d |y|^{-d} - A_dd |y|^{-d-2} y_i y_j,
    \end{equation}
    where $A_d = \max{\{d-2, 1\}}$. It is clear that $\partial_n \widehat{\Lambda}_k$ is smooth on $U_k \cap \partial \Omega$,
    except at $\{x^{(k)}\}$ where its singularity is located.
    For  $x \in \bigl( U_k \cap \partial \Omega \bigr) \setminus \{ x^{(k)}\}$,
    let us evaluate $\partial_n \widehat{\Lambda}_k$:
    \begin{align}
        \notag
        \partial_n \widehat{\Lambda}_k(x) 
        & = n(x) \cdot \nabla \widehat{\Lambda}_k(x)\\
        \notag
        & = n(x) \cdot \Bigl( (\nabla \Psi_k)^{\t}\!(x) \nabla \Lambda\bigl(\Psi_k(x)\bigr) \Bigr)\\
        \label{eq:expresion_tilde_gamma_grad}
        & = n_k \cdot \nabla \Lambda(y),
    \end{align}
    where we used \cref{item:variable_change_4} of \cref{lem:variable change}
    and the existence of a unique $y \in V_k \cap n_k^\perp$ such that $\Psi_k(x) = y$, 
    thanks to \cref{item:variable_change_1} of \cref{lem:variable change}.
    Thus
    substituting \eqref{eq:grad gamma} into~\eqref{eq:expresion_tilde_gamma_grad},
    we obtain that
    \[
        \partial_n \widehat{\Lambda}_k(x) = n_k \cdot \nabla \Lambda(y) =  A_d |y|^{-d} n_k \cdot y = 0,
    \]
    which concludes the proof.
\end{proof}

To continue, 
we present a result concerning the scaling of~$\laplacian \widehat \Lambda_k$ around $x^{(k)}$ and its integrability. 
\begin{lem}
    \label{lem:behavior_near_0}
    There exist $R > 0$ and $C_{\Lambda} > 0$ such that
    \[
        \forall x \in \Omega \cap B(x^{(k)}, R), \qquad
        \bigl\lvert \Delta \widehat{\Lambda}_k(x) \bigr\rvert
        \leq C_{\Lambda} \left|x - x^{(k)}\right|^{1-d}.
    \]
    In particular, $\Delta \widehat{\Lambda}_k \in L^{p}(U_k \cap \Omega)$ for $1 < p < \frac{d}{d-1}$. 
\end{lem}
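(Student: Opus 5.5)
The plan is to compute $\Delta \widehat{\Lambda}_k$ by the chain rule and use the facts that $\Lambda$ is harmonic away from $0$ and that $\nabla\Psi_k$ is close to the identity near $\xk$. Writing $y = \Psi_k(x)$, the chain rule gives
\[
    \Delta \widehat{\Lambda}_k(x) = \operatorname{tr}\Bigl( (\nabla\Psi_k(x))^{\t}\, \nabla^2\Lambda(y)\, \nabla\Psi_k(x) \Bigr) + \nabla\Lambda(y) \cdot \Delta\Psi_k(x).
\]
Since $\operatorname{tr}\nabla^2\Lambda(y) = \Delta\Lambda(y) = 0$ for $y \neq 0$, I will rewrite the first term as
\[
    \operatorname{tr}\Bigl( \bigl(\nabla\Psi_k (\nabla\Psi_k)^{\t} - \I_d\bigr)\, \nabla^2\Lambda(y) \Bigr).
\]

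Next I bound each piece using \cref{lem:inverse psi}. First, $\nabla\Psi_k(\nabla\Psi_k)^{\t} - \I_d = \mathrm{O}(|x-\xk|)$. Second, \eqref{eq:grad gamma} gives $|\nabla^2\Lambda(y)| \le C|y|^{-d}$ and $|\nabla\Lambda(y)| \le A_d |y|^{1-d}$. Third, $\Delta\Psi_k = \mathrm{O}(1)$.

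It remains to compare $|y|$ with $|x-\xk|$. Since $\Psi_k$ is a diffeomorphism with $\Psi_k(\xk)=0$ and $\nabla\Psi_k(\xk)=\I_d$, the estimate $\Psi_k(x) = x-\xk+\mathrm{O}(|x-\xk|^2)$ yields $R>0$ such that $B(\xk,R)\subset U_k$ and
\[
    \tfrac12|x-\xk| \le |\Psi_k(x)| \le 2|x-\xk| \quad \text{on } B(\xk,R).
\]
Combining the bounds, the first term is $\mathrm{O}(|x-\xk|\cdot|x-\xk|^{-d}) = \mathrm{O}(|x-\xk|^{1-d})$, and the second term is $\mathrm{O}(|x-\xk|^{1-d})$. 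Together these give the constant $C_\Lambda$.

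For the integrability claim, the bound gives $|\Delta\widehat\Lambda_k|^p \le C|x-\xk|^{p(1-d)}$ near $\xk$. In polar coordinates this is integrable on $B(\xk,R)$ exactly when $p(d-1) < d$, that is, when $p < d/(d-1)$.

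On $(U_k\cap\Omega)\setminus B(\xk,R)$, the function $\Delta\widehat\Lambda_k$ is smooth, since $\Psi_k$ is smooth and $\Psi_k(x)$ stays away from $0$. I will want it bounded there, which I will arrange by shrinking $U_k$ so that $\Psi_k$ and its derivatives are bounded up to the closure; the paper already uses this boundedness on $\overline{U_k}$ in \cref{lem:inverse psi}. Only this last bookkeeping about the outer region needs care. The main estimate is the cancellation coming from harmonicity of $\Lambda$: without it one would only get $|x-\xk|^{-d}$, which fails to be in $L^p$ for any $p \geq 1$.
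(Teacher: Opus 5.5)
Your proof is correct and follows essentially the same route as the paper: chain rule, removal of the term $\operatorname{tr}\nabla^2\Lambda = \Delta\Lambda = 0$ so that only $\bigl(\nabla\Psi_k(\nabla\Psi_k)^{\top} - \mathrm{Id}\bigr)$ pairs with the Hessian, the Taylor estimates of \cref{lem:inverse psi}, and the lower bound $|\Psi_k(x)| \geq \tfrac12 |x - x^{(k)}|$ on a small ball. You also handle boundedness of $\Delta\widehat{\Lambda}_k$ on $(U_k\cap\Omega)\setminus B(x^{(k)},R)$ for the $L^p$ claim, which the paper leaves implicit; your fix (shrink $U_k$ so $\Psi_k$ is smooth up to $\overline{U_k}$, with $|\Psi_k|$ bounded below there because $\Psi_k(B(x^{(k)},R))$ is a neighbourhood of $0$) is fine.
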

\begin{proof}
    Let us first compute the Laplacian of $\widehat{\Lambda}_k$ close to $x^{(k)}$ 
    using the rules of composition: 
    \begin{align*}
        \Delta \widehat{\Lambda}_k &= \nabla \cdot \left( (\nabla \Psi_k)^{\t} \nabla \Lambda \circ \Psi_k \right)\\
        &= \sum_{i, j=1}^d \partial_i^2 \left(\Psi_k\right)_j \partial_j \Lambda \circ \Psi_k
        + \sum_{i, j, \ell=1}^d \partial_i \left(\Psi_k\right)_j \partial_i \left(\Psi_k\right)_\ell \left[ \hess \Lambda \circ \Psi_k \right]_{j \ell }\\
        &= \Delta \Psi_k \cdot \nabla \Lambda \circ \Psi_k +  \left( \nabla \Psi_k \bigl(\nabla \Psi_k\bigr)^{\t}\right)  \colon \hess \Lambda \circ \Psi_k.
    \end{align*}
    The notation $\partial_i \left(\Psi_k\right)_j$ is here to indicate that 
    the $j$-th component of $\Psi_k$ is differentiated with respect to the $i$-th variable.  
    Since $\Delta \Lambda(x) = 0$ for $x \neq x^{(k)}$, we have
    \[
        \Delta \widehat{\Lambda}_k = 
        \Delta \Psi_k \cdot \nabla \Lambda \circ \Psi_k 
        +  \left( \nabla \Psi_k \bigl(\nabla \Psi_k\bigr)^{\t} - \I_d \right)  
        \colon \hess \Lambda \circ \Psi_k.
    \]
    To continue, we use the scalings of $\Psi_k$ and its derivatives around $x^{(k)}$ obtained in \cref{lem:inverse psi}.
    In particular, there exists a constant $C$ and a radius $R > 0$ such that
    \begin{alignat*}{2}
        \forall x \in B(x^{(k)}, R) \cap \Omega,
        \qquad
        \lvert \Psi_k(x) - (x - x^{(k)}) \rvert
        & \leq C \left|x - x^{(k)}\right|^2,\\
        \qquad
        \lvert \nabla \Psi_k (x) - \I_d \rvert
        & \leq C \left| x - x^{(k)} \right|,\\
        \qquad
        \lvert \laplacian  \Psi_k(x) \rvert
        & \leq C,
    \end{alignat*}
    where, for vectors and matrices,
    the notation $| \, \cdot \, |$ denotes the Euclidean norm and the associated operator norm.
    For any $x$ in $B(x^{(k)}, R) \cap \Omega$, 
    it follows from the triangle inequality that
    \begin{align*}
        \left\lvert \nabla \Psi_k (x) \, \bigl(\nabla \Psi_k\bigr)^{\t}\!(x) - \I_d  \right\rvert
        &=
        \left\lvert \Bigl( \I_d + (\nabla \Psi_k (x)\, - \I_d) \Bigr) \Bigl( \I_d + (\nabla \Psi_k (x)\, - \I_d) \Bigr)^\t - \I_d  \right\rvert
        \\
        &\leq
        2 C|x-x^{(k)}| + C^2|x-x^{(k)}|^2.
    \end{align*}
    Therefore, for $x \neq x^{(k)}$ close to $x^{(k)}$,
    it holds that
    \begin{equation}
        \label{eq:laplace tilde gamma}
        \begin{aligned}
            \bigl\lvert \Delta \widehat{\Lambda}_k(x) \bigr\rvert
            &\leq C \Bigl\lvert \nabla \Lambda\circ \Psi_k(x)  \Bigr\rvert
            + \Bigl(2C \left|x-x^{(k)}\right| + C^2 \left|x-x^{(k)}\right|^2\Bigr) \bigl\lvert \hess \Lambda \circ \Psi_k(x) \bigr\rvert.
        \end{aligned}
    \end{equation}
    Recall that
    \(
        \left\lvert \nabla \Lambda (y) \right\rvert = A_d |y|^{1 - d}
    \)
    for a constant factor $A_d$ depending on the dimension (see \eqref{eq:grad gamma}).
    Upon reducing~$R$ if necessary to ensure~$|x - x^{(k)}| - C|x - x^{(k)}|^2 \geq \frac{1}{2} |x - x^{(k)}|$ for all $x \in B(x^{(k)}, R)$,
    we deduce that
    \begin{align*}
        \lvert \nabla \Lambda\circ \Psi_k(x) \rvert
        = \frac{A_d}{\left\lvert \Psi_k(x)  \right\rvert^{d-1}}
        & \leq \frac{A_d}{\Bigl| \left\lvert x - x^{(k)} \right\rvert - \left\lvert \bigl( \Psi_k(x) - (x - x^{(k)}) \bigr)  \right\rvert \Bigr|^{d-1}}\\
        & \leq \frac{A_d}{\Bigl| \left\lvert x - x^{(k)} \right\rvert - C |x - x^{(k)}|^2 \Bigr|^{d-1}}\\
        & \leq \frac{2^{d-1} A_d}{|x - x^{(k)}|^{d-1}},
    \end{align*}
    for any $x$ in $B(x^{(k)}, R) \cap \Omega$.
    An analogous argument for the Hessian term $\hess \Lambda \circ \Psi_k$,
    shows that~$\left| \nabla^2 \Lambda \circ \Psi_k(x) \right|$ can be bounded by
    $\left| x - x^{(k)} \right|^{-d}$ up to a prefactor,
    which in view of \eqref{eq:laplace tilde gamma} concludes the proof.
\end{proof}

The function~$\widehat \Lambda_k =  \Lambda \circ \Psi_k$ is only defined on~$\Omega \cap U_k$.
From $\widehat{\Lambda}_k$, we construct a function $\widetilde{\Lambda}_k$ defined on the whole domain~$\Omega$.
To do so,
we introduce a smooth function~$\eta_L \colon \real \to \real$ satisfying
\[
    \eta_L(z) = 
    \begin{cases}
        0 & \quad \text{if $z \geq - L + 1 $}, \\
        z + L & \quad \text{if $z \leq -L - 1$}.
    \end{cases}
\]

Using this function, 
we construct the function $\widetilde\Lambda_k$ as
\begin{equation}
    \label{eq:fundamental solution everywhere}
        \widetilde{\Lambda}_k \colon 
        \left|\, \,  \begin{aligned}
        \Omega & \to \real\\
        x & \mapsto \left\{
            \begin{aligned}
                & \eta_L \circ \Lambda \circ \Psi_k(x) && \text{ if } x \in U_k \cap \Omega,\\
                & 0 && \text{ if } x \in \Omega \setminus U_{k}.
            \end{aligned}
        \right.
    \end{aligned}
        \right.
\end{equation}

\begin{figure}
    \centering
    \begin{tikzpicture}
\begin{scope}[rotate=90]
\draw[line width=1.8pt, color={rgb,255:red,31;green,119;blue,180}]
    (-0.6, -4.5)
    .. controls (-0.2, -3.0) and (0.5, -1.2) .. (0.6, 0.0)
    .. controls (0.7, 1.2) and (0.2, 3.0) .. (-0.4, 4.5);

\draw[line width=1.6pt, black]
    (-0.3745, -1.1098)
    .. controls (-0.30, -0.78) and (-0.22, -0.42) .. (-0.1474, 0.0623)
    .. controls (-0.14, 0.35) and (-0.14, 0.62) .. (-0.2120, 1.1849)
    .. controls (0.08, 1.26) and (0.55, 1.42) .. (1.2000, 1.3000)
    .. controls (1.24, 0.88) and (1.28, 0.44) .. (1.300, 0.0000)
    .. controls (1.26, -0.44) and (1.24, -0.88) .. (1.100, -1.4000)
    .. controls (0.55, -1.28) and (0.08, -1.22) .. (-0.3745, -1.1098);

\fill[fill=black!25, draw=black!50, line width=0.8pt]
    (0.62, 0) -- ++(89:0.45) arc (89:262:0.45) -- cycle;

\fill[black] (0.6, 0) circle (1.8pt);
\end{scope}

\node[font=\large, above] at (0, 0.7) {$x^{(k)}$};

\draw[->, line width=0.9pt]
    (-3.05, 1.5) -- (-3.05, 0.15);
\node[font=\large] at (-3.0, 1.85) {$\partial\Omega$};

\draw[->, line width=0.9pt]
    (2.1, -0.5) -- (1.22, 0.0);
\node[font=\large] at (2.4, -0.6) {$\partial U_{k}$};

\draw[->, line width=0.9pt, black!50]
    (-0.9, -1.8) -- (-0.15, 0.2);
\node[font=\large, text=black!50] at (-1.15, -2.2)
    {$\mathrm{Supp}\,\widetilde{\Lambda}_k$};

\node[font=\LARGE] at (0.3, -2.0) {$\Omega$};
\node[font=\LARGE] at (0.3, 2.8) {$\mathbb{R}^d \backslash \Omega$};
\end{tikzpicture}
    \caption{This figure illustrates the domain~$U_k$ near the boundary at $x^{(k)}$, 
    introduced in \eqref{eq:fundamental solution on the tube}. 
    Notice that $U_k$ has been represented as a tube, 
    as it has been constructed using the tubular neighborhood theorem 
    (see the proof of \cref{lem:variable change}).
    The grey area represents the support of~$\widetilde \Lambda_k$.
    Since $\widetilde \Lambda_k = 0$ outside of it, 
    there are no continuity issues on the boundary~$\partial U_k \cap \Omega$.}
    \label{fig:gammatilde}
\end{figure}

Notice that $\widetilde \Lambda_k$ coincides with $\widehat \Lambda_k + L$ around~$x=x^{(k)}$,
in view of the divergence of~$\Lambda$ to~$-\infty$ at this point.
Fix any~$R > 0$ such that~$B(x^{(k)}, R) \cap \Omega \subset U_k$.
Clearly, there exists~$L > 0$ so that
\[
    \mathrm{Supp} \Bigl( \widetilde \Lambda_k \Bigr) \subset \Omega \cap U_k \cap B\bigl(x^{(k)}, \tfrac{R}{2}\bigr) .
\]
In particular $\widetilde \Lambda_k(x) = 0$ for all $x \in \Omega$ 
close to the boundary~$\partial U_k \cap \Omega$. 
From now on, since $L$ is fixed in the remainder of the analysis, 
we use the shorthand notation $\eta = \eta_L$.
Hence~$\widetilde \Lambda_k$ is a smooth function on~$\Omega$ (see \cref{fig:gammatilde}). 
We furthermore derive the following properties of~$\widetilde \Lambda_k$.

\begin{cor}\label{cor:tilde gamma}
    The function $\widetilde \Lambda_k$ defined in~\eqref{eq:fundamental solution everywhere} 
    is smooth on $\overline{\Omega}\backslash \{x^{(k)}\}$ and satisfies 
    \begin{equation}\label{eq:normal derivative tilde gamma}
        \partial_n \widetilde \Lambda_k(x) = 0 \quad \text{on } \partial \Omega \setminus \{ x^{(k)}\},
    \end{equation}
    \begin{equation}\label{eq:regularity laplacian tilde gamma}
        \widetilde \Lambda_k \in W^{1, p}(\Omega) \quad \text{and}\quad \Delta \widetilde \Lambda_k \in L^p(\Omega) \quad \text{for all } 1 < p < \frac{d}{d-1}.
    \end{equation}
    Finally, one has the following pointwise bound for a constant $\widetilde{C}_{\Lambda} > 0$:
    \begin{equation}\label{eq:scaling laplacian tilde gamma}
        \forall x \in \Omega, \qquad
        \bigl\lvert \Delta \widetilde{\Lambda}_k(x) \bigr\rvert
        \leq \widetilde{C}_{\Lambda} \left|x - x^{(k)}\right|^{1-d}.
    \end{equation}
\end{cor}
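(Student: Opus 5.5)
The plan is to split $\Omega$ into two regions: a small half-ball $\Omega \cap B(x^{(k)}, r_0)$ on which $\widetilde\Lambda_k = \widehat\Lambda_k + L$ exactly, and its complement, where $\widetilde\Lambda_k$ is built from smooth functions away from the singularity. I will first fix the radius $r_0$. Since $\Lambda\circ\Psi_k(x) \to -\infty$ as $x \to x^{(k)}$, uniformly because $|\Psi_k(x)| \geq \frac12|x-x^{(k)}|$ near $x^{(k)}$ (see the proof of \cref{lem:behavior_near_0}), there is $r_0 \in (0,R)$ such that $\Lambda\circ\Psi_k \leq -L-1$ on $\Omega\cap B(x^{(k)}, r_0)$. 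On that set $\eta\circ\Lambda\circ\Psi_k = \widehat\Lambda_k + L$.

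\textbf{Smoothness.} On $U_k \cap \overline\Omega \setminus\{x^{(k)}\}$, the function $\eta\circ\Lambda\circ\Psi_k$ is a composition of smooth maps, because $\Psi_k$ is a diffeomorphism up to the boundary and $\Psi_k(x)\neq 0$ there. The support of $\widetilde\Lambda_k$ lies in $B(x^{(k)},R/2)\cap U_k$, so $\widetilde\Lambda_k$ vanishes identically near $\partial U_k\cap\overline\Omega$. Hence gluing with $0$ on $\Omega\setminus U_k$ gives a smooth function on $\overline\Omega\setminus\{x^{(k)}\}$.

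\textbf{Normal derivative.} Take $x\in\partial\Omega\setminus\{x^{(k)}\}$. If $x\notin U_k$, then $\widetilde\Lambda_k$ vanishes near $x$ and the claim is trivial. If $x\in U_k$, the chain rule gives $\partial_n\widetilde\Lambda_k(x) = \eta'(\widehat\Lambda_k(x))\,\partial_n\widehat\Lambda_k(x)$, which is $0$ by \cref{lem:normal_derivative_fundamental}.

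\textbf{Pointwise bound on $\Delta\widetilde\Lambda_k$.} On $\Omega\cap B(x^{(k)},r_0)$ we have $\Delta\widetilde\Lambda_k = \Delta\widehat\Lambda_k$, and \cref{lem:behavior_near_0} gives the bound $C_\Lambda|x-x^{(k)}|^{1-d}$. On $\Omega\setminus B(x^{(k)},r_0)$ the function $\widetilde\Lambda_k$ is smooth up to the boundary, and its support is contained in the compact set $\overline{B(x^{(k)},R/2)}\cap\overline{U_k}\setminus B(x^{(k)},r_0)$. So $\Delta\widetilde\Lambda_k$ is bounded there by some $M$. Since $|x-x^{(k)}|\leq \operatorname{diam}\Omega$, this gives $|\Delta\widetilde\Lambda_k|\leq M(\operatorname{diam}\Omega)^{d-1}|x-x^{(k)}|^{1-d}$. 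Taking $\widetilde C_\Lambda$ as the maximum of the two constants proves \eqref{eq:scaling laplacian tilde gamma}.

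\textbf{Integrability.} The bound just proved implies $\Delta\widetilde\Lambda_k\in L^p(\Omega)$ whenever $p(d-1)<d$. Indeed, $\int_{B(x^{(k)},\rho)}|x-x^{(k)}|^{-p(d-1)}\,\mathrm dx$ is finite, computed in polar coordinates. For $W^{1,p}$, I argue the same way:
\begin{itemize}
\item Near $x^{(k)}$, $|\widetilde\Lambda_k| \lesssim |\Lambda(\Psi_k(x))|+L$, which is bounded by a logarithm for $d=2$ and by $2^{d-2}|x-x^{(k)}|^{2-d}$ for $d\ge3$. Both are in $L^p$.
\item Near $x^{(k)}$, $|\nabla\widetilde\Lambda_k|\le|\nabla\Psi_k|\,|\nabla\Lambda\circ\Psi_k|\lesssim|x-x^{(k)}|^{1-d}$, using the bound from the proof of \cref{lem:behavior_near_0}. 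This is in $L^p$ for $p<d/(d-1)$.
\item Away from $x^{(k)}$ everything is bounded.
\end{itemize}
The function is smooth away from a single point and its gradient is in $L^1$, so the classical gradient coincides with the weak gradient; this is the standard removable-point argument with a cutoff, valid because $d\ge2$. This yields \eqref{eq:regularity laplacian tilde gamma}.

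I expect the only delicate step to be justifying the identity $\widetilde\Lambda_k=\widehat\Lambda_k+L$ on a full neighborhood of $x^{(k)}$, which needs a uniform lower bound on $|\Psi_k|$. Everything else is bookkeeping.
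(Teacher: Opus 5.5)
Your proof is correct and follows the same route as the paper, with more detail. Smoothness comes from the construction, the normal derivative from \cref{lem:normal_derivative_fundamental}, the Laplacian bound from \cref{lem:behavior_near_0} near $x^{(k)}$ plus boundedness away from it, and $W^{1,p}$ from the scaling of $\Lambda$ and $\nabla\Lambda$. Your chain-rule argument with $\eta'$ also covers the boundary points of $U_k$ in the transition region of $\eta$, which the paper glosses over.

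There are two minor points. First, the bound $\Lambda\circ\Psi_k\le -L-1$ near $x^{(k)}$ follows from the \emph{upper} bound $|\Psi_k(x)|\le 2|x-x^{(k)}|$, or simply from continuity of $\Psi_k$ with $\Psi_k(x^{(k)})=0$. It does not follow from the lower bound you cite. Second, your removable-point argument is unnecessary. Since $x^{(k)}\in\partial\Omega$ is not a point of $\Omega$, $\widetilde\Lambda_k$ is already smooth on $\Omega$, so its classical gradient is its weak gradient.
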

\begin{proof}
    The smoothness of $\widetilde \Lambda_k$ on $\overline{\Omega}\backslash \{x^{(k)}\}$ follows from its definition and the discussion before \cref{cor:tilde gamma}.
    Since $\widetilde \Lambda_k$ coincides with $\widehat \Lambda_k + L$ around $x^{(k)}$,
    \eqref{eq:normal derivative tilde gamma} follows from \cref{lem:normal_derivative_fundamental}.
    The integrability and the pointwise bound of~$\Delta \widetilde \Lambda_k$ follow from \cref{lem:behavior_near_0} as its proof only depends on the scaling of $\widehat{\Lambda}_k$ around $x^{(k)}$.
    Finally, 
    $\widetilde \Lambda_k \in W^{1, p}(\Omega)$ follows from the scaling of $\Lambda$ in \eqref{eq:grad gamma} and the smoothness of~$\Psi_k$ in~\cref{lem:variable change}.
\end{proof}

From the integrability of $\widetilde{\Lambda}_k$ and its derivatives established in~\eqref{eq:regularity laplacian tilde gamma},
we know from \eqref{eq:normal_} that~$\partial_n \widetilde{\Lambda}_k$ is well-defined as an element of $W^{-\frac{1}{p}, p}(\partial \Omega)$ for any $1 < p < \frac{d}{d-1}$.
In the following lemma, we compute the action of $\partial_n \widetilde{\Lambda}_k$ in the duality pairing against test functions in $C^1(\partial \Omega)$.
\begin{lem}
    \label{lem:compatibility constant}
    Let~$\widetilde \Lambda_k$ be defined as in~\eqref{eq:fundamental solution everywhere}.
    For any $v \in C^{1}(\partial \Omega)$,
    \[
        \left\langle \partial_n \widetilde \Lambda_k, v \right\rangle_{W^{-\frac{1}{p}, p}(\partial \Omega), W^{\frac{1}{p}, p'}(\partial \Omega)}
        = - |\Omega| C_d v\bigl(x^{(k)}\bigr)\, ,
    \]
    where $\left\langle \cdot, \cdot \right\rangle_{W^{-\frac{1}{p}, p}(\partial \Omega), W^{\frac{1}{p}, p'}(\partial \Omega)}$
    is the duality pairing introduced in \eqref{eq:normal_}, 
    and the constant $C_d$ is defined in \cref{thm:lam0}.
    In particular,
    \[
        \int_{\Omega} \laplacian \widetilde \Lambda_k
        = - |\Omega| C_d.
    \]
\end{lem}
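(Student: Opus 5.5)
The plan is to evaluate the pairing directly from definition \eqref{eq:normal_}, by excising a small half-ball around $\xk$ and applying the classical divergence theorem on the complement.

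Fix $v \in C^1(\partial\Omega)$ and extend it to $w \in C^1(\overline{\Omega})$, which is admissible in $W^{1,p'}(\Omega)$. By \eqref{eq:normal_},
\[
  \bigl\langle \partial_n \widetilde\Lambda_k, v\bigr\rangle = \int_\Omega \nabla\widetilde\Lambda_k\cdot\nabla w + \int_\Omega (\Delta\widetilde\Lambda_k)\, w .
\]
Both integrands are integrable by \eqref{eq:regularity laplacian tilde gamma}. By dominated convergence, the right-hand side is therefore the limit as $\rho\to0$ of the same integrals over
\[
  \Omega_\rho := \Omega\setminus \Psi_k^{-1}\bigl(B(0,\rho)\bigr).
\]
Excising a preimage of a Euclidean ball in the flattened variables, rather than a Euclidean ball in $x$, is what makes the boundary term computable. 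On $\Omega_\rho$ the function $\widetilde\Lambda_k$ is smooth up to the boundary (\cref{cor:tilde gamma}), so Green's formula gives
\[
  \int_{\partial\Omega_\rho} w\,\partial_\nu\widetilde\Lambda_k ,
\]
with $\nu$ the outward normal of $\Omega_\rho$. On $\partial\Omega\cap\partial\Omega_\rho$ this term vanishes by \eqref{eq:normal derivative tilde gamma}. What remains is the flux through the inner surface
\[
  \Sigma_\rho=\Omega\cap\partial\Psi_k^{-1}\bigl(B(0,\rho)\bigr),
\]
where $\nu$ points towards $\xk$.

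Next I would compute this flux by changing variables $y=\Psi_k(x)$ on $\Sigma_\rho$. The image of $\Sigma_\rho$ is the half-sphere $\{|y|=\rho,\ y\cdot n_k<0\}$. Near $\xk$ we have $\widetilde\Lambda_k=\Lambda\circ\Psi_k+L$, so $\nabla\widetilde\Lambda_k=(\nabla\Psi_k)^{\t}\nabla\Lambda(\Psi_k)$. By \eqref{eq:grad gamma},
\[
  \nabla\Lambda(y)=A_d\rho^{1-d}\,\tfrac{y}{|y|}\quad\text{on } |y|=\rho .
\]
The flux thus equals
\[
  -\int_{\{|y|=\rho,\ y\cdot n_k<0\}} w(\Phi_k(y))\,A_d\rho^{1-d}\,\bigl(1+\mathrm{O}(\rho)\bigr)\,\d\sigma(y).
\]
The factor $1+\mathrm{O}(\rho)$ absorbs the Jacobian of the surface change of variables and the discrepancy between $\nu$ and the pulled-back radial direction; both are $\I_d+\mathrm{O}(\rho)$ by \cref{lem:inverse psi}. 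The overall minus sign comes from $\nu$ pointing inward while $\nabla\Lambda$ points outward in $y$. Since the half-sphere has area $\tfrac{\omega_d}{2}\rho^{d-1}$ and $w\circ\Phi_k(y)=v(\xk)+\mathrm{O}(\rho)$ by continuity, the flux tends to
\[
  -\tfrac{A_d\omega_d}{2}\,v(\xk) = -|\Omega| C_d\, v(\xk),
\]
using \eqref{eq:def_Cd}. This is the claimed identity.

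For the second statement I would take $v\equiv1$ and $w\equiv1$, which kills the gradient term and yields $\int_\Omega\Delta\widetilde\Lambda_k=-|\Omega|C_d$.

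The main obstacle is justifying the $1+\mathrm{O}(\rho)$ control uniformly on $\Sigma_\rho$. Concretely, one must show that the outward normal to $\Psi_k^{-1}(\partial B(0,\rho))$ equals
\[
  \frac{(\nabla\Psi_k)^{\t}y/|y|}{\bigl|(\nabla\Psi_k)^{\t}y/|y|\bigr|}
\]
and that the surface measure transforms with factor $1+\mathrm{O}(\rho)$. Both follow from $\nabla\Psi_k=\I_d+\mathrm{O}(|x-\xk|)$. Once this holds, the error in the flux is $\mathrm{O}(\rho^{d-1}\cdot\rho^{1-d}\cdot\rho)=\mathrm{O}(\rho)\to0$. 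A secondary point is that $\Omega_\rho$ has corners where $\Sigma_\rho$ meets $\partial\Omega$. These corners are Lipschitz, which is enough for the divergence theorem.
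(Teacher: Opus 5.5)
Your proof is correct, and it has the same skeleton as the paper's: Green's formula on $\Omega$ minus a shrinking neighbourhood of $\xk$, the contribution of $\partial\Omega$ vanishing by \eqref{eq:normal derivative tilde gamma}, and a flux limit on the inner surface. The one real difference is the choice of excised set, and it changes how the limit is computed.

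\textbf{The paper's route.} It removes Euclidean balls $B_\lambda^{(k)}$, which requires two separate arguments. First, the comparison \eqref{eq:bound gradient gamma tilde} between $\nabla\Lambda\circ\Psi_k$ and $\nabla\Lambda$, which gives $\frac{x-\xk}{|x-\xk|}\cdot\nabla\widehat\Lambda_k = A_d|x-\xk|^{1-d}\bigl(1+\mathrm{O}(|x-\xk|)\bigr)$. Second, a rescaling and dominated-convergence argument, based on the graph $h_k$ and $h_k(\lambda\xi)/\lambda\to0$, showing that $\partial B_\lambda^{(k)}\cap\Omega$ asymptotically fills a half-sphere.

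\textbf{Your route.} You remove $\Phi_k(B(0,\rho))$ instead. Property (b) of \cref{lem:variable change} then makes $\Psi_k(\Sigma_\rho)$ exactly the open half-sphere, so the second argument disappears. The price is the surface Jacobian of $\Phi_k$ and the normal to a level set of $|\Psi_k|$. Both are $\I_d+\mathrm{O}(\rho)$ perturbations.

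The step you flag as the main obstacle is in fact lighter than you suggest. With $\nu=-(\nabla\Psi_k)^{\top}y/|(\nabla\Psi_k)^{\top}y|$ and $\nabla\Lambda(y)=A_d|y|^{-d}y$, one gets exactly $\partial_\nu\widetilde\Lambda_k=-A_d\rho^{-d}\,|(\nabla\Psi_k)^{\top}y|$ on $\Sigma_\rho$. The bound $-A_d\rho^{1-d}(1+\mathrm{O}(\rho))$ then follows immediately.

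\textbf{Trade-off.} The paper's choice keeps the natural Euclidean geometry and only needs pointwise bounds on $\nabla\widehat\Lambda_k$. Yours replaces the half-sphere limiting argument with an exact parametrisation of the inner boundary, which is slightly cleaner. The corner issue is the same in both approaches and is harmless.
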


\begin{proof}
    For $\lambda >0$, denote by~$B_{\lambda}^{(k)}$ the open ball centered around $x^{(k)}$ of radius $\lambda$.
    Consider a test function~$v \in C^1(\partial \Omega)$, 
    and, with a slight abuse of notation, let $v \in C^1(\overline{\Omega})$ also denote an arbitrary extension of $v$ to $\overline{\Omega}$.    
    From \eqref{eq:regularity laplacian tilde gamma}, notice that $(\Delta \widetilde{\Lambda}_k) v$ is integrable. 
    Thus, the following holds
    \[
        \int_{\Omega} (\Delta \widetilde \Lambda_k) v
        = \lim_{\lambda \to 0} \int_{\Omega \setminus B_{\lambda}^{(k)}} (\Delta \widetilde \Lambda_k) v.
    \]
    Notice that $\widetilde{\Lambda}_k$ is smooth over $\Omega \setminus B_{\lambda}^{(k)}$,
    and $v \in C^1(\overline{\Omega})$,
    thus by the Green formula on $\Omega \setminus B_{\lambda}^{(k)}$, 
    \begin{equation}
        \label{eq:two_integrals}
        \begin{aligned}
        \int_{\Omega \setminus B_{\lambda}^{(k)}} (\Delta \widetilde{\Lambda}_k) v
        & =  - \int_{\Omega \setminus B_{\lambda}^{(k)}}  \nabla \widetilde{\Lambda}_k \cdot \nabla v
        + \int_{\partial \left( \Omega \setminus B_{\lambda}^{(k)} \right)} (\partial_n \widetilde{\Lambda}_k) v \\
        & = - \int_{\Omega \setminus B_{\lambda}^{(k)}}  \nabla \widetilde{\Lambda}_k \cdot \nabla v
        + \int_{\partial \Omega \setminus B_{\lambda}^{(k)}} \partial_n \widetilde{\Lambda}_k v 
        + \int_{\partial B_{\lambda}^{(k)} \cap \, \Omega} (\partial_n \widetilde{\Lambda}_k) v \\
        & = - \int_{\Omega \setminus B_{\lambda}^{(k)}}  \nabla \widetilde{\Lambda}_k \cdot \nabla v + 
        0
        + \int_{\partial B_{\lambda}^{(k)} \cap \, \Omega} (\partial_n \widetilde{\Lambda}_k) v.
        \end{aligned}
    \end{equation}
    The first term in the last line of \eqref{eq:two_integrals} converges to $- \int_{\Omega}  \nabla \widetilde{\Lambda}_k \cdot \nabla v$ as $\lambda \to 0$, 
    by the integrability of the integrand.
    Hence, injecting \eqref{eq:two_integrals} in the definition of the duality pairing \eqref{eq:normal_} 
    and taking the limit~$\lambda \to 0$,
    yields
    \begin{equation}\label{eq:duality_pairing_in_the_proof}
        \left\langle \partial_n \widetilde \Lambda_k, v \right\rangle_{W^{-\frac{1}{p}, p}(\partial \Omega), W^{\frac{1}{p}, p'}(\partial \Omega)}
        = \int_{\Omega} (\Delta \widetilde \Lambda_k) v + \int_{\Omega} \nabla \widetilde \Lambda_k \cdot \nabla v
        = \lim_{\lambda \to 0} \int_{\partial B_{\lambda}^{(k)} \cap \, \Omega} (\partial_n \widetilde{\Lambda}_k) v.
    \end{equation}
    It remains to understand the limit as $\lambda \to 0$ of
    the right-hand side of~\eqref{eq:duality_pairing_in_the_proof}.
    Writing the normal derivative more explicitly,
    and recalling that $\widetilde \Lambda_k$ coincides with $\widehat \Lambda_k + L$ around $x^{(k)}$,
    we deduce that
    \begin{equation}\label{eq:integral laplacian gamma tilde}
        \int_{\partial B_{\lambda}^{(k)} \cap \, \Omega} \partial_n \widetilde{\Lambda}_k v
        = - \int_{\partial B_{\lambda}^{(k)} \cap \, \Omega} \frac{x - x^{(k)}}{|x - x^{(k)}|} \cdot \nabla \widehat \Lambda_k(x) v(x) \, \sigma(\d x).
    \end{equation}
    Similarly to the proof of \cref{lem:variable change}, 
    without loss of generality, we can consider that $\xk = 0$ and that the outward normal vector to $\partial \Omega$ at $x^{(k)}$ is $e_d = (0, \dotsc, 0, 1)$.
    Using~\cref{lem:inverse psi} 
    and reasoning as in~\cref{lem:behavior_near_0},
    we obtain that, in the limit~$x \to 0$,
    \begin{align*}
        \frac{x}{|x|} \cdot \nabla \widehat\Lambda_k(x)
        & =
        \frac{x}{|x|} \cdot \Bigl(\left(\nabla \Psi_k\right)^{\t}\!(x) \nabla \Lambda \circ \Psi_k(x)\Bigr) \\
        &= 
        \frac{x}{|x|} \cdot \biggl( 
            \left(\nabla \Psi_k\right)^{\t}\!(x) \Bigl(\nabla \Lambda \circ \Psi_k(x)  - \nabla \Lambda (x) \Bigr)
            + \bigl( \left(\nabla \Psi_k\right)^{\t}\!(x) - \I_d \bigr)  \nabla \Lambda (x) 
            + \nabla \Lambda(x) 
        \biggr) \, \\
        &= 
        \frac{x}{|x|} \cdot \biggl( 
            \bigl( \I_d + \mathrm{O}(|x|)\bigr) \Bigl(\nabla \Lambda \circ \Psi_k(x)  - \nabla \Lambda (x) \Bigr) + \mathrm O(|x|)  \frac{x}{|x|^{d}} + \max\{d-2, 1 \}\frac{x}{|x|^{d}}
        \biggr) \, ,
    \end{align*}
    where we used the explicit expression of $\nabla \Lambda$ in the last equation.
    To bound the first term, 
    we note that
    \begin{equation}
        \label{eq:bound gradient gamma tilde}
        \begin{aligned}
        \left\lvert \nabla \Lambda \circ \Psi_k(x)  - \nabla \Lambda (x)  \right\rvert
        &= \max\{d-2, 1 \}
        \left\lvert \frac{\Psi_k(x)}{\left\lvert \Psi_k(x) \right\rvert^d}  
        - \frac{x}{\left\lvert x \right\rvert^d}  \right\rvert \\
        &= \max\{d-2, 1 \}
        \left\lvert \Psi_k(x) \left( \frac{1}{\left\lvert \Psi_k(x) \right\rvert^d} - \frac{1}{|x|^d} \right)
        + \frac{\Psi_k(x) - x}{\left\lvert x \right\rvert^d}  \right\rvert
        = \mathrm O\bigl(|x|^{2-d}\bigr).
    \end{aligned}\end{equation}
    Thus, we finally conclude that
    \begin{equation}\label{eq:limit}\begin{aligned}
        \int_{\partial B_{\lambda}^{(k)} \cap \, \Omega} \frac{x}{|x|} \cdot \nabla \widehat \Lambda_k (x)v(x)\, \sigma( \d x)
        &= \max\{d-2, 1 \} \int_{\partial B_{\lambda}^{(k)} \cap \, \Omega} v(x) \frac{1 + \mathrm O(\lambda) }{\lambda^{d-1}}\, \sigma( \d x)
        \\
        &\xrightarrow[\lambda \to 0]{}
        \frac{\max\{d-2, 1 \}}{2} \omega_d \, v(0).
    \end{aligned}\end{equation}
    The limit is obtained by noticing that
    \begin{align} 
        \notag
        \int_{\partial B_{\lambda}^{(k)} \cap \, \Omega} v(x) \frac{1}{|x|^{d-1}}\, \sigma( \d x)
        &= \frac{1}{\lambda^{d-1}} \int_{\partial B_{\lambda}^{(k)}} v(x) \1_{\{x_d \leq h_k(x_1, \dotsc, x_{d-1})\}}\, \sigma( \d x)\\
        \notag
        &= \int_{\partial B_1} v(\lambda x) \1_{\{\lambda x_d \leq h_k(\lambda x_1, \dotsc, \lambda x_{d-1})\}}\, \sigma( \d x)
    \end{align}
    where we used \eqref{eq:local description omega} which follows from the local description of $\Omega$ around $0$
    fixed by the choice of coordinates. 
    In view of~\eqref{eq:expansion h}
    it holds that
    \[
        \left\lvert \frac{h_k(\lambda x_1, \dotsc, \lambda x_{d-1})}{\lambda} \right\rvert \xrightarrow[\lambda \to 0]{}   0.
    \]
    And thus, by the dominated convergence theorem, we obtain
    \begin{equation*}
        \int_{\partial B_{\lambda}^{(k)} \cap \, \Omega} v(x) \frac{1}{|x|^{d-1}}\, \sigma( \d x) \xrightarrow[\lambda \to 0]{} v(0) \int_{\partial B_1} \1_{\{x_d \leq 0\}}\, \sigma( \d x),
    \end{equation*}
    
    The result follows by combining~\eqref{eq:duality_pairing_in_the_proof}, \eqref{eq:integral laplacian gamma tilde}, \eqref{eq:limit} 
    and the expression of $C_d$ from \cref{thm:lam0}.
\end{proof}

We show in \cref{thm:fk} that the solution of \eqref{eq:fkepd distributional} can be decomposed as~$\widetilde \Lambda_k + S_k$,
with~$\widetilde \Lambda_k$ the most singular term and $S_k$ a function gathering subsingular terms.
To this end, we study the subsingular terms in the next subsection.

\subsection{\texorpdfstring{The subsingular terms $S_k$}{The subsingular terms S k}}
\label{sec:subsingular_terms}
Consider the following PDE for~$S_k$:
\begin{equation}\label{eq:subsingular_terms}
    \left\{
    \begin{aligned}
        -\Delta S_k & = \Delta \widetilde{\Lambda}_k + C_d ~~  && \text{ in } \Omega, \\
        \partial_n S_k & = 0 ~~  && \text{ on } \partial \Omega, \\
    \end{aligned}
    \right.
\end{equation}
where~$C_d$ is the constant given by~\eqref{eq:def_Cd}, introduced in~\cref{thm:lam0}.
We emphasize that the constant~$C_d$
has been chosen so that the compatibility condition \eqref{eq:compat_w1p}
 for the elliptic problem~\eqref{eq:subsingular_terms} is satisfied.
Let us state the main result of this section rigorously.

\begin{prop}\label{thm:subsingular_terms}
    Consider the function $\widetilde{\Lambda}_k$ defined in \eqref{eq:fundamental solution everywhere}
    and the constant $C_d$ defined in \eqref{eq:def_Cd}.
    Then, 
    for any $p \in (1, \frac{d}{d-1})$, 
    there exists a unique mean-zero weak $W^{1, p}(\Omega)$ solution $S_k$ 
    of~\eqref{eq:subsingular_terms}.

    Furthermore, it holds that $S_k \in C^\infty(\overline{\Omega}\backslash \{x^{(k)}\})$,  
    and there exists a constant $K_d > 0$ such that
    \begin{align}\label{eq:scaling subsingular terms}
        \forall x \in \Omega, \qquad
        \left| S_k(x)\right| &\leq  \left\{
            \begin{aligned}
            & K_2 \, &&\text{for } d = 2,\\
            & K_3 \left(1 + \left| \log|x - x^{(k)}| \right|\right)\, &&\text{for } d = 3,\\
            & K_d\, |x - x^{(k)}|^{3-d}\, &&\text{for } d \geq 4.
            \end{aligned}
        \right. 
    \end{align}
\end{prop}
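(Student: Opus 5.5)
Existence and uniqueness come first, from \cref{prop:sol_a_la_aramaki} with $g = \Delta \widetilde{\Lambda}_k + C_d$ and $h = 0$. By \cref{cor:tilde gamma}, $g \in L^p(\Omega)$ for every $p \in (1, \frac{d}{d-1})$. By \cref{lem:compatibility constant}, $\int_\Omega g = -|\Omega| C_d + C_d|\Omega| = 0$, so the compatibility condition \eqref{eq:compat_w1p} holds. We therefore get a solution in $W^{1,p}(\Omega)$, unique up to a constant, and we fix the constant by imposing zero mean. Uniqueness does not depend on $p$, because for $p_1<p_2$ we have $W^{1,p_2}\subset W^{1,p_1}$.

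Smoothness away from $x^{(k)}$ is a local regularity argument. On $\overline{\Omega}\setminus \overline{B(x^{(k)},\rho)}$ the right-hand side $g$ is $C^\infty$ up to the boundary, since $\widetilde{\Lambda}_k$ is smooth there by \cref{cor:tilde gamma}. The Neumann datum is zero and $\partial\Omega$ is smooth. Standard interior and boundary elliptic regularity for the Neumann problem, applied with cut-offs and bootstrapping from $W^{1,p}$ through $W^{2,p}$ and Sobolev embeddings, then gives $S_k\in C^\infty(\overline{\Omega}\setminus\{x^{(k)}\})$.

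The pointwise bound \eqref{eq:scaling subsingular terms} is the main step, and I would obtain it from a Neumann Green's function representation. Let $G(x,y)$ be the Neumann function of $\Omega$, normalized so that $-\Delta_y G(x,\cdot) = \delta_x - 1/|\Omega|$, $\partial_{n_y}G = 0$ and $\int G(x,y)\,\d y=0$. It satisfies the classical bound $|G(x,y)|\le C\,\Gamma_d(|x-y|)$ for $x,y\in\overline\Omega$, where $\Gamma_d(r) = 1+|\log r|$ for $d=2$ and $\Gamma_d(r) = r^{2-d}$ for $d\ge3$. This bound is the one external input; near the boundary it follows from the reflection construction in the flattened coordinates of \cref{lem:variable change}. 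Because $\int g=0$ and $S_k$ has zero mean, we can write $S_k(x) = \int_\Omega G(x,y)\, g(y)\,\d y$. This identity is first checked against test functions through the weak formulation, and then holds pointwise by the continuity just established.

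By \eqref{eq:scaling laplacian tilde gamma}, $|g(y)|\le C(1+|y-x^{(k)}|^{1-d})$. Hence
\[
    |S_k(x)| \le C\int_{\Omega} \Gamma_d(|x-y|)\bigl(1+|y-x^{(k)}|^{1-d}\bigr)\,\d y .
\]
The remaining task is the classical Riesz-potential composition estimate $\int_{B} |x-y|^{2-d}|y-x^{(k)}|^{1-d}\,\d y$ over a bounded set $B$. The exponents add to $3-2d$, and $3-2d+d=3-d$. This gives $C|x-x^{(k)}|^{3-d}$ when $d\ge4$, $C(1+|\log|x-x^{(k)}||)$ when $d=3$ (the borderline case), and a bounded quantity when $d=2$, where the kernel $1+|\log|x-y||$ against the integrable weight $|y-x^{(k)}|^{-1}$ stays bounded. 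To prove the estimate I would split $\Omega$ into three regions: $|y-x^{(k)}|<|x-x^{(k)}|/2$, $|y-x|<|x-x^{(k)}|/2$, and the remainder, where $|x-y|$ and $|y-x^{(k)}|$ are comparable. The first two regions each give $|x-x^{(k)}|^{3-d}$. The remainder is $\int_{|y-x^{(k)}|\gtrsim |x-x^{(k)}|} |y-x^{(k)}|^{3-2d}\,\d y$, which produces both the $d\ge4$ power and the $d=3$ logarithm. I expect the main obstacle to be justifying the uniform Green's function bound up to the boundary, including near $x^{(k)}$.
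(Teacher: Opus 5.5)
Your proposal is correct and follows essentially the same route as the paper: existence and uniqueness from \cref{prop:sol_a_la_aramaki} with compatibility from \cref{lem:compatibility constant}, then the Neumann Green function representation with the pointwise bounds on $G$ and a splitting estimate for $\int_\Omega |x-z|^{2-d}|z-x^{(k)}|^{1-d}\,\mathrm{d}z$ (your three regions do the job of the two regions in \cref{lem:integral estimate}). The differences are minor: you identify $S_k$ with $\int_\Omega G(x,z)\,g(z)\,\mathrm{d}z$ by a duality argument, which uses the symmetry of $G$ and Fubini, whereas the paper regularizes the source into $g_{k,\delta}$ and passes to the limit using the $W^{1,p}$ stability estimate and pointwise convergence; you also spell out the local elliptic regularity behind smoothness away from $x^{(k)}$, which the paper leaves implicit.
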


Before turning to the proof of \cref{thm:subsingular_terms},
we need a technical lemma. 

\begin{lem}\label{lem:integral estimate}
    Let $d \geq 3$. 
    Then there exists~$K'_{d} > 0$ such that 
    \begin{equation}
        \label{eq:integral_estimate}
        \forall x \in \Omega, \qquad
        \int_{\Omega} |x - z|^{2-d}|z - x^{(k)}|^{1-d}\, \d z 
        \leq K'_{d} \, \ell_k(x) \, ,
    \end{equation}
    where
    \[
        \ell_k(x) =
        \begin{cases}
            1 + \left| \log |x - x^{(k)}| \right|\, & \text{ for } d = 3,\\
            \left| x - x^{(k)} \right|^{3 - d}\, & \text{ for } d \geq 4.\\
        \end{cases}
    \]
    Moreover, for $d=2$, the function
    \[
        x \mapsto \int_\Omega \left| \log|x-z| \right| |z - x^{(k)}|^{-1}\, \d z
    \]
    is in $L^\infty(\overline \Omega)$.
\end{lem}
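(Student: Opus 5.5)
This is a classical Riesz-potential convolution estimate, and I would prove it by splitting the domain of integration into regions where one of the two singular factors is comparable to a constant.

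Set $\rho = |x - x^{(k)}|$. Since $\Omega$ is bounded, $\Omega \subset B(x^{(k)}, D)$ with $D = \operatorname{diam}(\Omega)$, and I can enlarge the domain of integration to $B(x^{(k)}, D)$; the integrand is nonnegative, so this only increases the left-hand side. The case $\rho = 0$ is excluded or trivial, since $\ell_k(x^{(k)}) = +\infty$. For $\rho > 0$, I split $B(x^{(k)}, D)$ into three regions:
\[
A_1 = \{|z - x^{(k)}| < \rho/2\},\quad A_2 = \{|z - x| < \rho/2\},\quad A_3 = \text{the rest}.
\]

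On $A_1$, the triangle inequality gives $|x - z| \geq \rho/2$. Hence the contribution is at most
\[
(\rho/2)^{2-d} \int_{B(0,\rho/2)} |w|^{1-d}\,\mathrm{d}w = c\,\rho^{2-d}\cdot\rho = c\,\rho^{3-d}.
\]

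On $A_2$, we have $|z - x^{(k)}| \geq \rho/2$. The contribution is therefore at most
\[
(\rho/2)^{1-d}\int_{B(0,\rho/2)}|w|^{2-d}\,\mathrm{d}w = c\,\rho^{1-d}\rho^2 = c\,\rho^{3-d}.
\]

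On $A_3$, both $|z - x^{(k)}| \geq \rho/2$ and $|z - x| \geq \rho/2$ hold. Since $|z - x| \leq |z - x^{(k)}| + \rho \leq 3|z - x^{(k)}|$, and likewise $|z - x^{(k)}| \leq 3|z - x|$, the two distances are comparable. The integrand is thus bounded by $C|z - x^{(k)}|^{3-2d}$. Integrating in polar coordinates about $x^{(k)}$ over $\rho/2 \leq r \leq D$ gives
\[
C\int_{\rho/2}^{D} r^{3-2d}\, r^{d-1}\,\mathrm{d}r = C\int_{\rho/2}^{D} r^{2-d}\,\mathrm{d}r.
\]
For $d = 3$ this equals $C\log(2D/\rho) \leq C'(1 + |\log\rho|)$. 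For $d \geq 4$ it is bounded by $C\rho^{3-d}$.

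Combining the three regions, the $A_1$ and $A_2$ terms give $\rho^{3-d}$, which is $\lesssim \ell_k(x)$ in both cases. For $d = 3$ those terms are $O(1)$, so the total bound is $K'_d\,\ell_k(x)$, as claimed.

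For $d = 2$, I would use Hölder's inequality. Take $p \in (1,2)$; then $|z - x^{(k)}|^{-1} \in L^p(\Omega)$ with a norm independent of $x$. Also $\log|x - \cdot\,| \in L^{p'}(\Omega)$ for every $p' < \infty$, and its norm is uniformly bounded in $x \in \overline{\Omega}$, because
\[
\int_{\Omega}\bigl|\log|x - z|\bigr|^{p'}\,\mathrm{d}z \leq \int_{B(0,D)}\bigl|\log|w|\bigr|^{p'}\,\mathrm{d}w < \infty.
\]
This yields the $L^\infty$ bound directly.

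I expect no real obstacle here. The only delicate point is the comparability argument in the outer region $A_3$ together with the bookkeeping of the logarithm for $d = 3$.
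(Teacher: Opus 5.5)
Your proof is correct, but it is organized differently from the paper's. For $d \geq 3$ the paper uses only two regions. On the complement of $B(x^{(k)}, 2\rho)$ it uses $|x - z| \geq |z - x^{(k)}|/2$, which is the same comparability you use on $A_3$. On $B(x^{(k)}, 2\rho)$ it rescales $z - x^{(k)} = \rho t$ to get $\rho^{3-d} \int_{B(0,2)} |e_x - t|^{2-d} |t|^{1-d} \, \mathrm{d}t$, where $e_x = (x - x^{(k)})/\rho$. It then simply states that this integral is finite and independent of $|x|$; uniformity in the direction $e_x$ follows from rotation invariance.

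The paper's route is shorter. Your three-region split makes all constants explicit. In fact, your estimates on $A_1$, $A_2$ and on the part of $A_3$ inside $B(x^{(k)}, 2\rho)$ are exactly what is needed to justify the paper's finiteness claim at unit scale.

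For $d = 2$ the paper only sketches that the same splitting gives a bound of the form $C_a + C_b \rho (1 + |\log \rho|)$. Your H\"older argument pairs $|z - x^{(k)}|^{-1} \in L^p$ for $p < 2$ with $\log|x - \cdot\,| \in L^{p'}$, bounded uniformly in $x \in \overline{\Omega}$. It is cleaner and gives the uniform bound on $\overline{\Omega}$ directly, with no splitting and no $\rho\,|\log \rho|$ term to track.

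This trick only works for $d = 2$. For $d \geq 3$, no pair of conjugate exponents makes both $|x - z|^{2-d}$ and $|z - x^{(k)}|^{1-d}$ integrable. This matches the actual blow-up of the integral as $x \to x^{(k)}$, which is why a splitting is needed there.
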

\begin{proof}
    Up to a translation, we can assume that $x^{(k)} = 0$.
    Fix $x \in \Omega$.
    The integral in~\eqref{eq:integral_estimate} is finite since~$x \neq 0$. 
    Since~$\Omega$ is a bounded domain,
    there exists~$\zeta \geq 1$ such that $\Omega \subset B(0, \zeta)$.
    We have
    \begin{equation*}
        \int_{\Omega} |x - z|^{2-d}|z|^{1-d}\, \d z 
         \leq \int_{B(0, \zeta)} |x - z|^{2-d}|z|^{1-d}\, \d z = I_a + I_b,
    \end{equation*}
    where
    \begin{equation*}
        I_a = \int_{B(0,2|x|)} |x - z|^{2-d} |z|^{1-d} \, \d z
        \quad \text{ and } \quad I_b = {\int_{B(0, \zeta) \setminus B(0,2|x|)} |x - z|^{2-d} |z|^{1-d} \, \d z}.
    \end{equation*}
    The change of variable $z = |x| t$ in $I_a$ leads to
    \[
        I_a = |x|^{3-d}\int_{B(0, 2)} \left|e_x- t\right|^{2-d}|t|^{1-d}\, \d t,
    \]
    with $e_x = x/|x|$. 
    The integral in this equation is finite and independent of $|x|$. 
    For $I_b$,
    since $|x - z| \geq |z| - |x| \geq |z|/2$ for any~$z$ in the domain of integration,
    we have, in the limit $x \to 0$,
    \begin{align*}
        I_b &\leq
          2^{d-2}\int_{B(0, \zeta) \setminus B(0, 2|x|)} |z|^{3-2d}\, \d z
            = 2^{d-2} \omega_d \int_{2|x|}^{\zeta} r^{2-d} \, \d r \\
            &\quad = \left\{
            \begin{aligned} 
                & \mathrm O (\left| \log |x| \right|) & \text{ for } d = 3,\\
                & \mathrm O (|x|^{3-d}) & \text{ for } d \geq 4
            \end{aligned}
        \right.
    \end{align*}
    where $\omega_d$ is the surface area of the unit sphere in $\real^d$, introduced in \cref{thm:lam0}.
    The claimed estimate follows by summing up the two estimates on the integrals $I_a$ and $I_b$. 

    For $d=2$, 
    the same approach based on splitting the integration domain can be used,
    which results in the existence of positive constants~${C_a, C_b}$  such that
    \[
        \int_\Omega \left| \log|x-z| \right| |z - x^{(k)}|^{-1}\, \d z 
        \leq C_a + C_b \bigl\lvert x-x^{(k)}\bigr\rvert \left( 1+ \left| \log |x-x^{(k)}| \right| \right),
    \]
    which is indeed uniformly bounded in $x \in \overline{\Omega}$.
\end{proof}

We are now in position to prove \cref{thm:subsingular_terms}.
\begin{proof}[Proof of \cref{thm:subsingular_terms}]
    The existence and uniqueness of the function $S_k$ solution to \eqref{eq:subsingular_terms} is a consequence of \cref{prop:sol_a_la_aramaki}. 
    Indeed, 
    the estimate \eqref{eq:scaling laplacian tilde gamma} ensures that $\Delta \widetilde{\Lambda}_k + C_d \in L^{p}(\Omega)$ for $1 < p < \frac{d}{d-1}$. 
    The compatibility equation \eqref{eq:compat_w1p} is satisfied by definition of $C_d$ and \cref{lem:compatibility constant} since
    \[
        \int_{\Omega} \Delta \widetilde{\Lambda}_k + \int_\Omega C_d = 0.
    \]

    We next prove the bound \eqref{eq:scaling subsingular terms}.
    The proof is performed by constructing an explicit solution to the equation~\eqref{eq:subsingular_terms}
    using the Neumann's Green function.

    Denote by $G \colon \Omega \times \Omega \to \real$ the Green function of the Laplacian in $\Omega$ 
    with homogeneous Neumann boundary condition, i.e., the function such that the classical 
    $C^2(\Omega) \cap C^0(\overline{\Omega})$ solution of 
    \begin{equation}\label{eq:green function smooth case}
        \left\{
        \begin{aligned}
            -\Delta u & = g ~~  && \text{ in } \Omega, \\
            \partial_n u & = 0 ~~  && \text{ on } \partial \Omega, 
        \end{aligned}
        \right.
    \end{equation}
    for any $g \in C^\infty(\overline{\Omega})$ with $\int_\Omega g = 0$, is given by
    \begin{equation}\label{eq:representation formula}
        u(x) - \frac{1}{|\Omega|}\int_{\Omega} u(z) \, \d z = \int_{\Omega} G(x, z)\, g(z) \, \d z.
    \end{equation}
    It is well-known that such a function $G$ exists and is unique (see, e.g., \cite[Section 6.7]{Gilbrag}
    or \cite[Section 3, Theorem 3.1]{GreenFunctionDavid}).
    Furthermore, \cite[Section 4, see in particular (4.14)]{GreenFunctionDavid} provides a pointwise estimate on~$G$ in the case $d \geq 3$:
    there exists a constant~$G_d > 0$ such that for all $x \neq z$ in $\Omega$,
    \begin{equation}\label{eq:green function estimate}
        \left| G(x, z) \right| \leq G_d |x - z|^{2-d}.
    \end{equation}
    In the dimension $d=2$ case, \cite[Theorem 5.14]{green2d} gives
    \begin{equation}\label{eq:green function estimate d=2}
        \left| G(x, z) \right| \leq G_2 \left( 1 + \left| \log |x - z| \right| \right).
    \end{equation}
    Consider now, for any $x \in \Omega$,
    \begin{equation}\label{eq:representation formula subsingular terms}
        \widehat{S}_k(x) = \int_{\Omega} G(x, z)\, g_k(z) \, \d z,
    \end{equation}
    with 
    \begin{equation}\label{eq:def_gk}
        g_k = \Delta \widetilde{\Lambda}_k + C_d.
    \end{equation}
    The goal of the proof is to establish that $\widehat{S}_k$ is well-defined, to derive pointwise bounds for $\widehat{S}_k$, 
    and finally show that $\widehat{S}_k = S_k$, 
    where $S_k$ is the unique solution to \eqref{eq:subsingular_terms} introduced at the beginning of the proof.

    \paragraph{Well-definedness and pointwise bounds.}
    Fix $x \in \Omega$. The bound $|g_k(z)| \leq C |z - x^{(k)}|^{1-d}$ for $z \in \Omega$ with some $C>0$ positive,from \eqref{eq:scaling laplacian tilde gamma},
    combined with \eqref{eq:green function estimate} and inserted into \eqref{eq:representation formula subsingular terms},
    gives, for a constant $K_d' > 0$,
    \begin{equation}\label{eq:bound s hat}
        \left| \widehat{S}_k(x) \right| \leq K_d' \int_{\Omega} \bigl\lvert x - z\bigr\rvert ^{2-d} \bigl\lvert z - x^{(k)} \bigr\rvert^{1-d} \, \d z, 
    \end{equation}
    in dimension $d \geq 3$.
    Similarly, in dimension $d=2$, one has
    \begin{equation}\label{eq:bound s hat 2}
        \left| \widehat{S}_k(x) \right| \leq K_d' \left(1 + \int_{\Omega} \Bigl\lvert  \log\bigl\lvert x - z\bigr\rvert \Bigr\rvert \bigl\lvert z - x^{(k)} \bigr\rvert^{1-d} \, \d z \right).
    \end{equation}
    As $x \neq x^{(k)}$, 
    and the integrands in \eqref{eq:bound s hat} and \eqref{eq:bound s hat 2} are both integrable locally around $x$ and $x^{(k)}$, 
    the function~$\widehat{S}_k$ is well defined. 
    Applying \cref{lem:integral estimate} to the right-hand side of \eqref{eq:bound s hat} and \eqref{eq:bound s hat 2}
    we obtain, for all $x \in \Omega$, the bound~\eqref{eq:scaling subsingular terms} for the function $\widehat{S}_k$.

    \paragraph{Proof that $\widehat{S}_k = S_k$.}
    \Cref{eq:representation formula} is known as the Green representation formula 
    of the Poisson problem \eqref{eq:green function smooth case}
    (see for instance \cite[Equation (3.4)]{GreenFunctionDavid} or \cite[Theorem 12, Chapter 2]{Evans10}).
    We provide a self-contained proof that this formula also provides the weak solution
    in the sense of \eqref{eq:weak formulation} of \eqref{eq:green function smooth case} with~$g_k \not \in C^\infty(\overline{\Omega})$, given in \eqref{eq:def_gk}, as the right-hand side.

    Define the function $\widehat{S}_{k, \delta}$ by,
    \begin{equation}\label{eq:def s hat n}
        \forall x \in \Omega, \qquad \widehat{S}_{k, \delta}(x) = \int_{\Omega} G(x, z) g_{k, \delta}(z) \, \d z,
    \end{equation}
    with 
    \begin{equation} 
        \forall z \in \Omega, \qquad g_{k, \delta}(z) = \rho_\delta(z) g_k(z) - \frac{1}{|\Omega|} \int_\Omega \rho_\delta(t)  g_k(t) \, \d t
    \end{equation}
    where $\rho_\delta\colon \overline{\Omega} \to [0, 1]$ is a smooth function such that  
    \begin{equation}\label{eq:def rho delta}
        \rho_\delta = 0 \text{ on } B\left(x^{(k)}, \frac{\delta}{2}\right) 
        \qquad \text{ and } \qquad 
        \rho_\delta = 1 \text{ on } \overline{\Omega} \setminus B\left(x^{(k)}, \delta\right).
    \end{equation}
    Notice that $g_{k, \delta}$ is a regularization of $g_k = \Delta \widetilde{\Lambda}_k + C_d$ and that $\int_\Omega g_{k, \delta} = 0$ for any $\delta > 0$.
    It is clear that, by construction, $g_{k, \delta}$
    converges to $g_k$ in $L^q$ and uniformly on compact sets for any $q \in [1, \frac{d}{d-1})$ as $\delta \to 0$.
    Furthermore,
    notice that 
    $g_{k, \delta}$ is smooth on $\overline{\Omega}$, 
    as $\Delta \widetilde{\Lambda}_k$ is smooth on $\overline{\Omega} \setminus \{x^{(k)}\}$ from \cref{cor:tilde gamma},
    and~$\rho_\delta$ is smooth and vanishes in a neighborhood of~$x^{(k)}$.
    Since we are now in the classical smooth setting (see \eqref{eq:green function smooth case} and \eqref{eq:representation formula}),
    the function $\widehat{S}_{k, \delta}$ is the zero-mean solution of 
    \[
        \left\{
            \begin{aligned}
            -\Delta \widehat{S}_{k, \delta} & = g_{k, \delta} ~~  && \text{ in } \Omega, \\
            \partial_n \widehat{S}_{k, \delta} & = 0 ~~  && \text{ on } \partial \Omega, \\
    \end{aligned}
        \right.
    \]
    in the classical sense and thus in the sense of \eqref{eq:weak formulation}. 
    The stability estimate from \cite[Proposition 2.3]{aramaki2018existence} yields a constant $C > 0$ such that, 
    for $p \in (1, \frac{d}{d-1})$,
    \begin{equation}
        \left\| S_k - \widehat{S}_{k, \delta} \right\|_{W^{1, p}(\Omega)} 
        \leq C \left\| g_k - g_{k, \delta} \right\|_{L^p(\Omega)} \xrightarrow[\delta \to 0]{} 0.
    \end{equation}
    Hence $\widehat{S}_{k, \delta}$ converges to $S_k$ in $W^{1, p}(\Omega)$.
    We now show that $\widehat{S}_{k, \delta}$ converges to $\widehat{S}_k$ pointwise in $\Omega$,
    so that~$\widehat{S}_k = S_k$ will follow.
    Fix $x \in \Omega$ and write 
    \begin{equation}\label{eq:integral difference}
        \left| \widehat{S}_{k, \delta}(x) - \widehat{S}_k(x) \right|
        \leq \int_{\Omega} |G(x, z)| \left| g_{k, \delta}(z) - g_k(z) \right| \, \d z.
    \end{equation}
    Fix a small neighborhood of $x$ of radius $\zeta > 0$ with $B(x, \zeta) \subset \Omega$.
    There exists $\delta_0 > 0$ such that for all~$\delta < \delta_0$,
    it holds $\rho_\delta g_k = g_k$ on~$B(x, \zeta)$.
    Hence,
    \[  
        \lim_{\delta \to 0} g_{k, \delta} (z) - g_k(z) = 
        \lim_{\delta \to 0} - \frac{1}{|\Omega|} \int_\Omega \rho_\delta g_k
        = 0 \quad \text{ uniformly in } z \in B(x, \zeta).
    \]
    Thus the integral over $B(x, \zeta)$ satisfies 
    \begin{equation}\label{eq:integral convergence in zeta}
        \int_{B(x, \zeta)} |G(x, z)| \left| g_{k, \delta}(z) - g_k(z) \right| \, \d z
        \leq \left\| G(x, \cdot )\right\|_{L^1(B(x, \zeta))} \left\| g_{k, \delta} - g_k \right\|_{L^\infty(B(x, \zeta))}.
    \end{equation}
    Since $G(x, \cdot)$ is bounded on $\Omega \setminus B(x, \zeta)$,
    the integral over $\Omega \setminus B(x, \zeta)$ satisfies
    \begin{equation}\label{eq:integral convergence out of zeta}
        \int_{\Omega \setminus B(x, \zeta)} |G(x, z)| \left| g_{k, \delta}(z) - g_k(z) \right| \, \d z
        \leq \left\| G(x, \cdot) \right\|_{L^\infty(\Omega \setminus B(x, \zeta))} \left\| g_{k, \delta} - g_k \right\|_{L^1(\Omega \setminus B(x, \zeta))}.
    \end{equation}
    Both the right-hand side of~\eqref{eq:integral convergence in zeta} and the right-hand side of~\eqref{eq:integral convergence out of zeta}
    tend to $0$ as $\delta \to 0$.
    Combined with~\eqref{eq:integral difference}, 
    this shows that $\widehat{S}_{k, \delta}$ converges pointwise in $\Omega$ to $\widehat{S}_k$.
    Together with the $W^{1,p}(\Omega)$ convergence of $\widehat{S}_{k, \delta}$ to $S_k$, we conclude that $\widehat{S}_k = S_k$ almost everywhere,
    and the estimate \eqref{eq:scaling subsingular terms} on $\widehat{S}_k$ completes the proof.
\end{proof}

\subsection{\texorpdfstring{The explicit solution to the point source problem \eqref{eq:fkepd distributional}}{The explicit solution to the point source problem}}
\label{sec:actual construction fk}
        We now gather the previous results to obtain an explicit expression for the solution of \eqref{eq:fkepd distributional}.
\begin{thm}\label{thm:fk}
    Consider the function $f_k$ introduced in \cref{def:fk}. Then,
    $f_k \in {C^{\infty}(\overline{\Omega}\setminus\{ x^{(k)}\})}$ 
    and 
    \begin{equation}\label{eq: def fk}
        f_k =  \widetilde{\Lambda}_k + S_k - \frac{1}{|\Omega|} \int_\Omega \widetilde{\Lambda}_k.
    \end{equation}
    As $x \to x^{(k)}$, the function $f_k$ satisfies:
    \begin{equation}
        \label{eq:estimate_fk}
        f_k(x)  = \Lambda(x - x^{(k)}) + \left \{ \begin{aligned}
            & \mathrm{O}(1)\, && \text{ for } d = 2,\\
            & \mathrm{O}\bigl(\log |x - x^{(k)}|\bigr)\, && \text{ for } d = 3,\\
            &  \mathrm{O}\bigl(|x - x^{(k)}|^{3-d}\bigr)\, && \text{ for } d \geq 4.
        \end{aligned} \right.
    \end{equation}
\end{thm}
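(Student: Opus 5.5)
The plan is to verify that the candidate $F_k := \widetilde{\Lambda}_k + S_k - \frac{1}{|\Omega|}\int_\Omega \widetilde{\Lambda}_k$ is a zero-mean weak solution of \eqref{eq:fkepd distributional} in $W^{1,p}(\Omega)$ for $p \in (1, \frac{d}{d-1})$. The uniqueness part of \cref{prop:sol_a_la_aramaki} then gives $F_k = f_k$, and the local expansion follows from the bounds already established on $\widetilde{\Lambda}_k$ and $S_k$.

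\textbf{Step 1: $F_k$ has the right regularity and mean.} By \eqref{eq:regularity laplacian tilde gamma}, $\widetilde{\Lambda}_k \in W^{1,p}(\Omega)$. By \cref{thm:subsingular_terms}, $S_k \in W^{1,p}(\Omega)$ and has zero mean. Hence $F_k \in W^{1,p}(\Omega)$. Since $\widetilde{\Lambda}_k \in L^1(\Omega)$, the subtracted constant is finite, and it is chosen exactly so that $\int_\Omega F_k = 0$.

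\textbf{Step 2: $F_k$ satisfies the weak formulation \eqref{eq:weak formulation}.} Fix $v \in C^1(\overline{\Omega})$. The constant does not contribute, so we treat the two terms separately.
\begin{itemize}
\item For $S_k$, the weak formulation of \eqref{eq:subsingular_terms} gives $\int_\Omega \nabla S_k\cdot\nabla v = \int_\Omega (\Delta\widetilde{\Lambda}_k + C_d) v$.
\item For $\widetilde{\Lambda}_k$, the definition \eqref{eq:normal_} of the normal derivative together with \cref{lem:compatibility constant} gives $\int_\Omega \nabla\widetilde{\Lambda}_k\cdot\nabla v = -\int_\Omega (\Delta\widetilde{\Lambda}_k) v - |\Omega| C_d\, v(x^{(k)})$. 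Here $v|_{\partial\Omega}$ is $C^1$ and $v$ itself serves as the extension.
\end{itemize}
Summing, the $\Delta\widetilde{\Lambda}_k$ terms cancel, leaving
\[
\int_\Omega \nabla F_k\cdot\nabla v = \int_\Omega C_d\, v - C_d|\Omega|\, v(x^{(k)}).
\]
This is \eqref{eq:weak formulation} with $g = C_d$ and $h = -C_d|\Omega|\delta_{x^{(k)}}$. Uniqueness up to constants, plus the zero-mean normalization, then yields $f_k = F_k$, which is \eqref{eq: def fk}.

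\textbf{Step 3: smoothness.} Smoothness on $\overline{\Omega}\setminus\{x^{(k)}\}$ follows directly: $\widetilde{\Lambda}_k$ is smooth there by \cref{cor:tilde gamma}, and $S_k$ is smooth there by \cref{thm:subsingular_terms}.

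\textbf{Step 4: the local expansion \eqref{eq:estimate_fk}.} Near $x^{(k)}$ we have $\widetilde{\Lambda}_k = \Lambda\circ\Psi_k + L$. The terms $S_k$ and the constant are controlled by \eqref{eq:scaling subsingular terms}: they are $\mathrm{O}(1)$ for $d=2$, $\mathrm{O}(\log|x-x^{(k)}|)$ for $d=3$, and $\mathrm{O}(|x-x^{(k)}|^{3-d})$ for $d\ge4$. It therefore remains to compare $\Lambda(\Psi_k(x))$ with $\Lambda(x-x^{(k)})$.

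By \cref{lem:inverse psi}, $\Psi_k(x) = (x-x^{(k)})(1+\mathrm{O}(|x-x^{(k)}|))$ in norm; more precisely $|\Psi_k(x)| = |x-x^{(k)}|\,(1+\mathrm{O}(|x-x^{(k)}|))$.
\begin{itemize}
\item For $d=2$, this gives $\log|\Psi_k(x)| - \log|x-x^{(k)}| = \mathrm{O}(|x-x^{(k)}|)$.
\item For $d\ge3$, it gives $|\Psi_k(x)|^{2-d} - |x-x^{(k)}|^{2-d} = \mathrm{O}(|x-x^{(k)}|^{3-d})$.
\end{itemize}
This error is $\mathrm{O}(1)$ when $d=3$ and is absorbed into the stated remainder in every dimension.

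The only delicate point is Step 2, specifically justifying that the pairing of \cref{lem:compatibility constant}, which is stated for test functions in $C^1(\partial\Omega)$, is consistent with \eqref{eq:weak formulation} for arbitrary $v\in C^1(\overline\Omega)$. This holds because the pairing \eqref{eq:normal_} does not depend on the choice of extension. The remaining steps are bookkeeping.
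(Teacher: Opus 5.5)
Your proposal is correct and follows essentially the same route as the paper. You verify the weak formulation by adding the identity from \cref{lem:compatibility constant} (read through the pairing \eqref{eq:normal_}) to the weak formulation of $S_k$, conclude $f_k = \widetilde{\Lambda}_k + S_k - \frac{1}{|\Omega|}\int_\Omega \widetilde{\Lambda}_k$ from the zero mean and the uniqueness in \cref{prop:sol_a_la_aramaki}, and obtain \eqref{eq:estimate_fk} from \eqref{eq:scaling subsingular terms} together with $|\Psi_k(x)| = |x-x^{(k)}|\bigl(1+\mathrm{O}(|x-x^{(k)}|)\bigr)$ from \cref{lem:inverse psi}. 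The point you flag as delicate is harmless, since the proof of \cref{lem:compatibility constant} already works with an arbitrary $C^1(\overline{\Omega})$ extension and the pairing does not depend on the extension.
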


\begin{proof}
    The proof consists in showing that $\widetilde{\Lambda}_k + S_k - \frac{1}{|\Omega|} \int_\Omega \widetilde{\Lambda}_k$ 
    is the unique solution to \eqref{eq:fkepd distributional} granted by~\cref{prop:sol_a_la_aramaki}.
    To this end, consider a function~$v \in C^1(\overline{\Omega})$.
    
    From \cref{lem:compatibility constant} and the definition of the duality pairing in \eqref{eq:normal_}, 
    one has that 
    \begin{equation}\label{eq:weak Lambda_k}
        \int_\Omega \nabla \widetilde{\Lambda}_k \cdot \nabla v
        + \int_\Omega (\Delta \widetilde{\Lambda}_k) \, v
        = - |\Omega| C_d v(x^{(k)}).
    \end{equation}
    Additionally, 
    since $S_k$ has been defined as the solution of \eqref{eq:subsingular_terms} in the sense of \eqref{eq:weak formulation}
    in \cref{thm:subsingular_terms}, 
    the following holds:
    \begin{equation}\label{eq:weak Sk}
        \int_\Omega \nabla S_k \cdot \nabla v
        = \int_{\Omega} \left( \Delta \widetilde{\Lambda}_k + C_d \right) v.
    \end{equation}
    By summing \eqref{eq:weak Lambda_k} and \eqref{eq:weak Sk}
    and subtracting the constant term $\frac{1}{|\Omega|} \int_\Omega \widetilde{\Lambda}_k$ in the gradient,
    one obtains
    \begin{equation}\label{eq:weak Lambda_k + Sk}
        \int_\Omega \nabla \left(\widetilde{\Lambda}_k + S_k - \frac{1}{|\Omega|} \int_\Omega \widetilde{\Lambda}_k\right) \cdot \nabla v
        = C_d \left( \int_\Omega v \right) - |\Omega| C_d v(x^{(k)}).
    \end{equation}
    This is exactly the weak formulation \eqref{eq:weak formulation} of \eqref{eq:fkepd distributional}.
    Since $S_k$ has a zero mean on $\Omega$ by \cref{thm:subsingular_terms},
    so does $\widetilde{\Lambda}_k + S_k - \frac{1}{|\Omega|} \int_\Omega \widetilde{\Lambda}_k$.
    This function also belongs to $W^{1, p}(\Omega)$ for $p \in (1, \frac{d}{d-1})$ by \cref{thm:subsingular_terms} and \cref{cor:tilde gamma}.
    Finally,
    by the uniqueness of the solution to \eqref{eq:fkepd distributional} provided by \cref{prop:sol_a_la_aramaki},
    we conclude that 
    \[
        f_k = \widetilde{\Lambda}_k + S_k - \frac{1}{|\Omega|} \int_\Omega \widetilde{\Lambda}_k.
    \]
    It remains to obtain the estimates \eqref{eq:estimate_fk}. These are a consequence of \cref{thm:subsingular_terms}, 
    the definition of $\widetilde{\Lambda}_k$ from \eqref{eq:fundamental solution everywhere}
    as well as the estimates on $\Psi_k$ from \cref{lem:inverse psi} and the definition of $\Lambda$ given in~\eqref{eq:fundamental_solution}.
    Indeed, 
    in view of \cref{lem:inverse psi}, for $d \geq 3$, 
    it holds as~$x \to x^{(k)}$ that
    \[
        \bigl| \widetilde \Lambda_k(x) - \Lambda\bigl(x - x^{(k)}\bigr) \bigr| 
        = \Bigl| - \left| \Psi_k(x)\right|^{2-d} + L + \bigl\lvert x - x^{(k)}\bigr\rvert^{2-d} \Bigr| 
        = \mathrm{O}\left( \bigl\lvert x - x^{(k)}\bigr\rvert^{3-d}\right),
    \]
    thanks to $\left| \Psi_k(x) - (x - x^{(k)})\right| = \mathrm{O}(|x - x^{(k)}|^2)$. 
    For $d=2$, the claimed estimate follows from a similar argument.
\end{proof}

In addition to the estimates \eqref{eq:estimate_fk} on $f_k$ from \cref{thm:fk},
we can also obtain $L^2(\Omega_{\varepsilon})$ estimates on~$f_k$, 
where $\Omega_\varepsilon$ is the domain with $N$ holes introduced in \eqref{eq:omega varepsilon}.
\begin{lem}\label{lem:L2 estimate fk}
    In the limit $\varepsilon \to 0$, the following estimates on the $L^2$ norm of $f_k$ hold:
    \begin{equation}\label{eq:estimate fk_L2_lemma}
        \|f_k\|_{L^2(\Omega_\varepsilon)}^2 = \left \{ \begin{aligned}
            & \mathrm{O}(1)\, && \text{ for } d \leq 3,\\
            & \mathrm{O}(|\log \re{(k)}|)\, && \text{ for } d = 4,\\
            & \mathrm{O}\left(\left(\re{(k)}\right)^{4-d}\right)\, && \text{ for } d \geq 5.
        \end{aligned} \right.
    \end{equation}
\end{lem}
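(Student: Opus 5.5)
The plan is to split $\Omega_\varepsilon$ into a region near $x^{(k)}$, where the singular behaviour of $f_k$ dominates, and its complement, where $f_k$ is harmless. Fix $R>0$ small enough that the local expansion \eqref{eq:estimate_fk} of \cref{thm:fk} holds on $\Omega \cap B(x^{(k)},R)$. That is, there is a constant $C$ with
\[
|f_k(x)| \le C\bigl(1+|\Lambda(x-x^{(k)})|\bigr) \qquad \text{for } x\in\Omega\cap B(x^{(k)},R),
\]
since each correction term in \eqref{eq:estimate_fk} is dominated by $|\Lambda|$ plus a constant. On the complement $\overline{\Omega}\setminus B(x^{(k)},R)$, the function $f_k$ is continuous, because $f_k\in C^\infty(\overline\Omega\setminus\{x^{(k)}\})$ by \cref{thm:fk}. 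This set is compact, so $f_k$ is bounded there, and the contribution of this region to $\|f_k\|^2_{L^2(\Omega_\varepsilon)}$ is $\mathrm{O}(1)$, uniformly in $\varepsilon$.

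For the near region, the key point is that $\Omega_\varepsilon$ excludes the ball $B(x^{(k)},\re{(k)})$. Hence
\[
\int_{\Omega_\varepsilon\cap B(x^{(k)},R)} f_k^2 \le 2C^2\Bigl(|B(x^{(k)},R)| + \int_{\{\re{(k)}\le |y|\le R\}} \Lambda(y)^2\,\d y\Bigr).
\]
We extend to the full annulus, forgetting that $\Omega$ is only a half-space-like region near $x^{(k)}$. Passing to polar coordinates, the last integral equals $\omega_d\int_{\re{(k)}}^R \Lambda(r)^2 r^{d-1}\,\d r$.

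The three cases then follow from this radial integral.
\begin{itemize}
\item For $d=2$, the integrand $r\log^2 r$ is integrable at $0$, so the integral is $\mathrm{O}(1)$.
\item For $d\ge3$, the integrand is $r^{4-2d}r^{d-1}=r^{3-d}$. This is integrable at $0$ for $d=3$, giving $\mathrm{O}(1)$. For $d=4$ it is $r^{-1}$, giving $\mathrm{O}(|\log \re{(k)}|)$. For $d\ge5$ the integral is bounded by $(\re{(k)})^{4-d}/(d-4)$.
\end{itemize}
Summing the near and far contributions gives \eqref{eq:estimate fk_L2_lemma}, since each of these bounds dominates the $\mathrm{O}(1)$ terms as $\re{(k)}\to0$.

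The only mildly delicate point is ensuring that the constants in \eqref{eq:estimate_fk} are uniform on a fixed neighbourhood $B(x^{(k)},R)$, rather than merely asymptotic statements. This follows directly from the proof of \cref{thm:fk}. There, $\widetilde\Lambda_k - \Lambda(\cdot-x^{(k)})$ is controlled pointwise through \cref{lem:inverse psi}, and $S_k$ satisfies the global pointwise bound \eqref{eq:scaling subsingular terms} on all of $\Omega$. One could alternatively bound $f_k$ globally on $\Omega$ by $C(1+|\Lambda(x-x^{(k)})|)$ directly from these two ingredients, which avoids the near/far split altogether.
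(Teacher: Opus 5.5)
Your proof is correct and follows essentially the same route as the paper. The paper also splits at a fixed radius, bounds $f_k$ away from $x^{(k)}$ using its smoothness from \cref{thm:fk}, uses the pointwise expansion of \cref{thm:fk} on the small ball, extends the integral to the full annulus $\{r_\varepsilon^{(k)} \le |x-x^{(k)}| \le R\}$, and finishes with the same radial computation. Your bound $C(1+|\Lambda|)$ differs from the paper's $|f_k|^2 \le C'|\Lambda|^2$ only cosmetically; the latter implicitly uses $R<1$ when $d=2$.
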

\begin{proof}
    Consider a fixed $\zeta > 0$, 
    small enough so that the estimates \eqref{eq:estimate_fk} on $f_k$ from \cref{thm:fk} hold for all $x \in B(x^{(k)}, \zeta)$, 
    and $\varepsilon > 0$ such that $r_\varepsilon^{(k)} < \zeta$.
    We decompose the squared $L^2(\Omega_\varepsilon)$ norm of $f_k$ as 
    \begin{equation}\label{eq:estimate fk_L2}
        \|f_k\|_{L^2(\Omega_\varepsilon)}^2 = 
        \|f_k\|_{L^2(\Omega_\varepsilon \setminus B(x^{(k)}, \zeta))}^2
        + \|f_k\|_{L^2(\Omega_\varepsilon \cap B(x^{(k)}, \zeta))}^2.
    \end{equation}
    Notice first that 
    $\Omega_\varepsilon \setminus B(x^{(k)}, \zeta) \subset \Omega \setminus B(x^{(k)}, \zeta)$
    and $f_k$ is smooth on $\overline{\Omega} \setminus B(x^{(k)}, \zeta)$ from \cref{thm:fk}, 
    hence there exists a constant $C>0$, independent of $\varepsilon$, such that
    \begin{equation}\label{eq:estimate fk_L2 far contribution}
        \|f_k\|_{L^2(\Omega_\varepsilon \setminus B(x^{(k)}, \zeta))} 
        \leq \|f_k\|_{L^2(\Omega \setminus B(x^{(k)}, \zeta))} 
        \leq C.
    \end{equation}
    The second contribution requires using the pointwise estimates on $f_k$ from \cref{thm:fk}.
    By these estimates,
    there exists a constant $C' > 0$ independent of $x$ such that, 
    for any $\varepsilon > 0$,
    \begin{equation} \label{eq:estimate fk_L2 close contribution}\begin{aligned}
        \|f_k\|_{L^2(\Omega_\varepsilon \cap B(x^{(k)}, \zeta))}^2
        & \leq C' \int_{\Omega_\varepsilon \cap B(x^{(k)}, \zeta)} \left| \Lambda \bigl(x-x^{(k)} \bigr) \right|^2 \, \d x\\
         & \leq C' \int_{B(x^{(k)}, \zeta) \setminus B(x^{(k)}, r_\varepsilon^{(k)})} \left| \Lambda \bigl(x-x^{(k)} \bigr) \right|^2 \, \d x.
    \end{aligned}\end{equation}
    On the annulus $B(x^{(k)}, \zeta) \setminus B(x^{(k)}, r_\varepsilon^{(k)})$, 
    we can integrate $\Lambda$ in polar coordinates.
    In dimension $d \geq 3$:
    \begin{equation}\label{eq:estimate fk_L2 close contribution final}
        \int_{B(x^{(k)}, \zeta) \setminus B(x^{(k)}, r_\varepsilon^{(k)})} \left| \Lambda \bigl(x-x^{(k)} \bigr) \right|^2 \, \d x 
        = \omega_d \int_{r_\varepsilon^{(k)}}^\zeta \rho^{2(2-d)} \rho^{d-1}\, \d \rho  \\
        = \left \{ \begin{aligned}            
            & \mathrm{O}(1)\, && \text{ for } d = 3,\\
            & \mathrm{O}(|\log \re{(k)}|)\, && \text{ for } d = 4,\\
            & \mathrm{O}\left(\left(\re{(k)}\right)^{4-d}\right),\, && \text{ for } d \geq 5,
        \end{aligned}
        \right.
    \end{equation}
    where the constant $\omega_d$ is the surface area of the unit sphere in dimension $d$ already
    introduced in \cref{thm:lam0}.
    Besides, in dimension $d=2$, 
    \begin{equation}\label{eq:estimate fk_L2 close contribution final d=2}
        \int_{B(x^{(k)}, \zeta) \setminus B(x^{(k)}, r_\varepsilon^{(k)})} \left| \Lambda \bigl(x-x^{(k)} \bigr) \right|^2 \, \d x 
        = \omega_2 \int_{r_\varepsilon^{(k)}}^\zeta |\log \rho|^2 \rho\, \d \rho  = \mathrm{O}(1).
    \end{equation}
    Plugging the estimates \eqref{eq:estimate fk_L2 far contribution},
    \eqref{eq:estimate fk_L2 close contribution final}
    and \eqref{eq:estimate fk_L2 close contribution final d=2} into 
    \eqref{eq:estimate fk_L2} leads to the result. 
\end{proof}

\section{Quasimode analysis and proof of the main results}
\label{sec:quasi-mode}
Let us now consider the quasi-mode $ \varphi_\varepsilon$ defined in \eqref{eq:def phi},
with parameters~$\Kek$ from \eqref{eq:def K} and the functions~$f_k$ solving~\eqref{eq:fkepd distributional}.
The key to prove our main results
is to compare the quasi-mode $\varphi_\varepsilon$ 
to the quasi-stationary distribution $\nu_\varepsilon$ (see \cref{thm:QSD}).
Before doing this, 
we state in \cref{sec:prop quasimode} all the properties on $\varphi_\varepsilon$ needed to this end.
We next move to the proof of the main results, 
starting with~\cref{thm:lam0} in~\cref{sec:mean exit} and then \cref{thm:exit hole distribution} in \cref{sec:exit hole}.

\subsection{\texorpdfstring{Results on the quasi-mode $\varphi_\varepsilon$}{Results on the quasi-mode phi epsilon}}
\label{sec:prop quasimode}
Let us start by estimating the $L^\infty$ norm of the quasi-mode $\varphi_\varepsilon$ on the Dirichlet boundary  $\Gamma_{\mathcal{D}}^{\varepsilon}$.

\begin{lem}\label{lem:quasi-mode}
    Let the parameters $\Kek$ be given by \eqref{eq:def K} for $k \in \{1, \ldots, N \}$.
    Then, in the limit $\varepsilon \to 0$, 
    the quasi-mode $\varphi_\varepsilon$ defined in \eqref{eq:def phi} satisfies:
    \[
        \lVert \varphi_\varepsilon \rVert_{L^{\infty}(\Gamma_{\mathcal{D}}^{\varepsilon})}
        = \mathrm{O}\left( \mathcal{E}_d(\Keo)\right) 
    \]
    where we recall that $\Keo = \sum_{k=1}^{N} \Kek$ and $\mathcal{E}_d$ is defined in \eqref{eq:def_Ed}.
\end{lem}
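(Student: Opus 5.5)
Fix $k$ and a point $x \in \Gamma_k^\varepsilon$, so that $|x - x^{(k)}| = \re{(k)}$. We write
\[
    \varphi_\varepsilon(x) = \Bigl(1 + \Kek f_k(x)\Bigr) + \sum_{j \neq k} K_\varepsilon^{(j)} f_j(x)
\]
and bound the two groups of terms separately. The argument has three steps, after which we take the maximum over the finitely many $k$. Since $\re{(k)} \to 0$ and the points $x^{(j)}$ are distinct, for $\varepsilon$ small enough $\Gamma_k^\varepsilon$ stays at distance at least some $\zeta>0$ from every $x^{(j)}$ with $j\neq k$.

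\emph{Cross terms.} By \cref{thm:fk}, each $f_j$ is smooth on $\overline{\Omega}\setminus\{x^{(j)}\}$, hence bounded on $\overline{\Omega}\setminus B(x^{(j)},\zeta)$. Consequently $|K_\varepsilon^{(j)} f_j(x)| \le C K_\varepsilon^{(j)} \le C\Keo$ for $j\neq k$. It remains to check that $\Keo = \mathrm{O}(\mathcal{E}_d(\Keo))$ in every dimension. This holds for $d=2$ trivially, for $d=3$ because $|\log \Keo|\ge 1$ for $\Keo$ small, and for $d\ge4$ because $\Keo \le \Keo^{1/(d-2)}$ when $\Keo\le 1$.

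\emph{Self term.} Here we use the expansion \eqref{eq:estimate_fk}, which holds uniformly on $\Omega\cap B(x^{(k)},\zeta)$. With $|x - x^{(k)}| = \re{(k)}$ we have $\Lambda(x-x^{(k)}) = \log \re{(k)}$ for $d=2$ and $-(\re{(k)})^{2-d}$ for $d\ge 3$. By the choice \eqref{eq:def K}, $\Kek\,\Lambda(x-x^{(k)}) = -1$ exactly; this is why the constant $1$ cancels. What remains is $\Kek$ times the remainder in \eqref{eq:estimate_fk}:
\begin{itemize}
    \item for $d=2$: $\mathrm{O}(\Kek)$;
    \item for $d=3$: $\mathrm{O}(\re{(k)}|\log \re{(k)}|) = \mathrm{O}(\Kek|\log\Kek|)$;
    \item for $d\ge4$: $\mathrm{O}\bigl((\re{(k)})^{d-2}(\re{(k)})^{3-d}\bigr) = \mathrm{O}(\re{(k)}) = \mathrm{O}\bigl((\Kek)^{1/(d-2)}\bigr)$.
\end{itemize}

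\emph{Passing from $\Kek$ to $\Keo$.} Since $\Kek \le \Keo$ and the functions $t\mapsto t$, $t\mapsto t|\log t|$ (for $t<e^{-1}$) and $t\mapsto t^{1/(d-2)}$ are nondecreasing near $0$, each bound above is $\mathrm{O}(\mathcal{E}_d(\Keo))$.

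The only delicate point is the uniformity of the $\mathrm{O}$ in \eqref{eq:estimate_fk}, that is, that it holds with a fixed constant on a fixed ball around $x^{(k)}$ intersected with $\Omega$. This uniformity follows from the explicit bounds in \cref{thm:subsingular_terms} and \cref{lem:inverse psi} used to prove \cref{thm:fk}. Combining the three steps gives the claimed bound on $\lVert \varphi_\varepsilon \rVert_{L^{\infty}(\Gamma_{\mathcal{D}}^{\varepsilon})}$.
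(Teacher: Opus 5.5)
Your proposal is correct and follows essentially the same route as the paper. You split $\varphi_\varepsilon$ on $\Gamma_k^\varepsilon$ into the self term, where \eqref{eq:def K} gives the exact cancellation $\Kek\,\Lambda(x-x^{(k)})=-1$ and the remainder in \eqref{eq:estimate_fk} produces the error, and the cross terms, which are $\mathrm{O}(\Keo)$ by smoothness of $f_j$ away from $x^{(j)}$; the only differences are cosmetic: you take the maximum over $k$ with monotonicity where the paper sums over $k$ and uses Jensen's inequality for $d\geq 4$, and you explicitly check $\Keo=\mathrm{O}(\mathcal{E}_d(\Keo))$, which the paper leaves implicit.
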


\begin{proof}
    For $x \in \Gamma_\mathcal{D}^{\varepsilon}$, 
    there exists $k \in \{ 1, \ldots, N \}$ such that $x \in \Gamma^{\varepsilon}_k$.
    The quasi-mode~$\varphi_\varepsilon$ can then be expressed as:
    \begin{equation}\label{eq:quasi-mode on the boundary}
         \varphi_\varepsilon(x) = 1 + \Kek f_k(x) +
        \sum_{j=1,\, j \neq k}^{N}\Ke^{(j)} f_j(x).
    \end{equation}
    From the disjointness of the holes
    and the regularity of $f_j$ given by~\cref{thm:fk} (namely $f_j \in C^\infty(\overline{\Omega} \setminus \{x^{(j)}\})$),
    the function $f_j$ is bounded uniformly in $\varepsilon$ in the limit $\varepsilon \to 0$ on $\Gamma_k^\varepsilon$ for~$j \neq k$.
    Hence, in the limit $\varepsilon \to 0$, 
    \begin{equation}\label{eq:estimate_other_fk}
        \left\| \sum_{j=1,\, j \neq k}^{N}\Ke^{(j)} f_j \right\|_{L^\infty(\Gamma_k^\varepsilon)} = \mathrm{O}(\Keo).
    \end{equation}
    Now, notice that the expression of $\Kek$ in \eqref{eq:def K}
    is simply $\Kek = - \left( \Lambda(x - \xk)\right)^{-1}$,
    which is indeed independent of $x \in \Gamma_k^{\varepsilon}$ as $\Lambda$ is a radial function.
    We can then use the estimate \eqref{eq:estimate_fk} on $f_k$ from~{\cref{thm:fk}} to obtain
    uniformly in $x \in \Gamma_k^\varepsilon$, when $\varepsilon \to 0$:
    \begin{equation}\label{eq:Kek fk estimate}       
        \begin{aligned}
        \Kek f_k(x)
        & = - 1
        + \left\{
            \begin{aligned}
                & \mathrm{O}\left(\left|\log \re{(k)}\right|^{-1}\right),\, && \text{ for } d = 2,\\
                & \mathrm{O}\left(\re{(k)}\,\left|\log \re{(k)}\right|\right),\, && \text{ for } d = 3,\\
                &  \mathrm{O}\left(\re{(k)}\right),\, && \text{ for } d \geq 4.
            \end{aligned}
        \right.\\
        \end{aligned}
    \end{equation}
    Combining the two bounds \eqref{eq:estimate_other_fk} and \eqref{eq:Kek fk estimate} in \eqref{eq:quasi-mode on the boundary},
    we find that, in the limit $\varepsilon \to 0$,
    \begin{equation}\label{eq:quasi-mode on the boundary estimate unsumed}
        \lVert \varphi_\varepsilon \rVert_{L^{\infty}(\Gamma_{k}^{\varepsilon})}
        = \mathrm{O}\left( \Keo \right) + \left\{
            \begin{aligned}
                & \mathrm{O}\bigl(\Kek\bigr)\, && \text{ for } d = 2,\\
                & \mathrm{O}\Bigl(\Kek \left| \log\bigl(\Kek\bigr)\right|\Bigr)\, && \text{ for } d = 3,\\
                & \mathrm{O}\Bigl(\left(\Kek\right)^{\frac{1}{d-2}}\Bigr)\, && \text{ for } d \geq 4.
            \end{aligned}
        \right.
    \end{equation}
    Repeating this for each $k \in \{1, \ldots, N\}$ shows that 
    $\lVert \varphi_\varepsilon \rVert_{L^{\infty}(\Gamma_{\mathcal{D}}^{\varepsilon})}$ can be bounded by the sum over $k$ 
    of the right-hand side of \eqref{eq:quasi-mode on the boundary estimate unsumed}. 
    Finally, Jensen's inequality implies
    \[
        \sum_{k=1}^N \left(\Kek\right)^{\frac{1}{d-2}}
        \leq N^{1-\frac{1}{d-2}}\left(\sum_{k=1}^N \Kek\right)^{\frac{1}{d-2}} = \mathrm{O} \left( \Keo^{\frac{1}{d-2}}\right),
    \]
    which leads to the desired estimate, in the limit $\varepsilon \to 0$ in dimension $d \geq 4$.
    In dimension $d=3$,
    the result follows from the bound
    \begin{equation}\label{eq:log fake jensen}
        \sum_{k=1}^N \Kek \left| \log\bigl(\Kek\bigr)\right|
        = \mathrm{O}\left( \Keo |\log \Keo|\right),
    \end{equation}
    where we used that the function $x \mapsto x |\log x|$ is increasing on $(0, e^{-1})$, 
    and $\Kek \leq \Keo$.
\end{proof}

Let us now state the properties of the quasi-mode $\varphi_\varepsilon$ 
that we need in the proof of our main results.
\begin{prop}\label{prop:properties quasi-mode}
    The quasi-mode $\varphi_\varepsilon$ defined in \eqref{eq:def phi} satisfies:
    % \begin{enumerate}[labelwidth=6em, leftmargin=!, align=left]
    %     \renewcommand{\theenumi}{Regularity}
    %     \item[\textbf{(Regularity.)\enspace}] \label{prop:quasi-mode smooth} $\varphi_\varepsilon$ is smooth on $\overline{\Omega_\varepsilon}$, in particular $\varphi_\varepsilon \in H^1(\Omega_\varepsilon)$;
    %     \renewcommand{\theenumi}{Laplacian}
    %     \item[\textbf{(Laplacian.)\enspace}] \label{prop:laplacian quasi-mode} $-\Delta \varphi_\varepsilon = C_d \Keo$ where $C_d$ is given in \eqref{eq:def_Cd} and $\Keo$ by \eqref{eq:def Keo};
    %     \renewcommand{\theenumi}{Neumann}
    %     \item[\textbf{(Neumann.)\enspace}] \label{prop:neumann quasi-mode} $\partial_n \varphi_\varepsilon = 0$ on $\Gamma_\mathcal{N}^\varepsilon$;
    %     \renewcommand{\theenumi}{Dirichlet}
    %     \item[\textbf{(Dirichlet.)\enspace}] \label{prop:Linf quasi-mode} $\|\varphi_\varepsilon\|_{L^\infty(\Gamma_{\mathcal{D}}^{\varepsilon})} = \mathrm{O}(\mathcal{E}_d(\Keo))$, where $\mathcal{E}_d$ is defined in \eqref{eq:def_Ed};
    %     \renewcommand{\theenumi}{Normalization}
    %     \item[\textbf{(Normalization.)\enspace}] \label{prop:L2 quasi-mode} $\|\varphi_\varepsilon - 1\|_{L^2(\Omega_\varepsilon)} =  \mathrm{O}(\mathcal{E}_d(\Keo))$.
    % \end{enumerate}
    \begin{enumerate}[label=\textbf{(\alph*)}]
        \item (Regularity) \label{prop:quasi-mode smooth} $\varphi_\varepsilon$ is smooth on $\overline{\Omega_\varepsilon}$, in particular $\varphi_\varepsilon \in H^1(\Omega_\varepsilon)$;
        \item (Laplacian) \label{prop:laplacian quasi-mode} $-\Delta \varphi_\varepsilon = C_d \Keo$ where $C_d$ is given in \eqref{eq:def_Cd} and $\Keo$ by \eqref{eq:def Keo};
        \item (Neumann) \label{prop:neumann quasi-mode} $\partial_n \varphi_\varepsilon = 0$ on $\Gamma_\mathcal{N}^\varepsilon$;
        \item (Dirichlet) \label{prop:Linf quasi-mode} $\|\varphi_\varepsilon\|_{L^\infty(\Gamma_{\mathcal{D}}^{\varepsilon})} = \mathrm{O}(\mathcal{E}_d(\Keo))$, where $\mathcal{E}_d$ is defined in \eqref{eq:def_Ed};
        \item (Normalization) \label{prop:L2 quasi-mode} $\|\varphi_\varepsilon - 1\|_{L^2(\Omega_\varepsilon)} =  \mathrm{O}(\mathcal{E}_d(\Keo))$.
    \end{enumerate}
\end{prop}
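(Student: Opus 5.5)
The five properties can be checked one at a time. Properties (a)–(c) follow from the regularity and decomposition of the $f_k$ in \cref{thm:fk}. Property (d) is \cref{lem:quasi-mode}. Property (e) combines the $L^2$ bounds of \cref{lem:L2 estimate fk} with the explicit form of $\Kek$ in \eqref{eq:def K}.

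For \ref{prop:quasi-mode smooth}, the plan is as follows. By \cref{thm:fk}, each $f_k$ belongs to $C^\infty(\overline{\Omega}\setminus\{x^{(k)}\})$. Since $\overline{\Omega_\varepsilon} \subset \overline{\Omega}\setminus\{x^{(1)},\dots,x^{(N)}\}$, the finite linear combination $\varphi_\varepsilon = 1 + \sum_k \Kek f_k$ is the restriction to $\overline{\Omega_\varepsilon}$ of a function smooth on a neighbourhood of it in $\overline\Omega$. This holds even near the corners where $\partial B(x^{(k)},r_\varepsilon^{(k)})$ meets $\partial\Omega$. In particular $\varphi_\varepsilon \in H^1(\Omega_\varepsilon)$, since $\Omega_\varepsilon$ is bounded.

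For \ref{prop:laplacian quasi-mode}, I would take a test function $v \in C_c^\infty(\Omega\setminus\{x^{(k)}\})$ in the weak formulation \eqref{eq:weak formulation} of \eqref{eq:fkepd distributional}. The Dirac term vanishes, and since $f_k$ is smooth there, integrating by parts gives $-\Delta f_k = C_d$ pointwise on $\Omega\setminus\{x^{(k)}\}$. Summing, $-\Delta\varphi_\varepsilon = C_d\sum_k \Kek = C_d \Keo$ on $\Omega_\varepsilon$.

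For \ref{prop:neumann quasi-mode}, I would use test functions $v \in C^1(\overline\Omega)$ vanishing near $x^{(k)}$. Subtracting the interior identity just obtained leaves $\int_{\partial\Omega} (\partial_n f_k)\, v = 0$, with $\partial_n f_k$ now a classical, smooth normal derivative on $\partial\Omega\setminus\{x^{(k)}\}$. Hence $\partial_n f_k = 0$ pointwise there. Alternatively, one can invoke \eqref{eq:normal derivative tilde gamma} together with the same argument applied to $S_k$. Since $\Gamma_{\mathcal N}^\varepsilon \subset \partial\Omega\setminus\{x^{(1)},\dots,x^{(N)}\}$, we get $\partial_n \varphi_\varepsilon = 0$ on $\Gamma_{\mathcal N}^\varepsilon$. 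The only point needing care is this passage from the weak Neumann condition to the pointwise one away from the singularity. Smoothness up to the boundary from \cref{thm:fk} makes it routine.

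Property \ref{prop:Linf quasi-mode} is exactly \cref{lem:quasi-mode}.

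For \ref{prop:L2 quasi-mode}, the triangle inequality gives $\|\varphi_\varepsilon - 1\|_{L^2(\Omega_\varepsilon)} \le \sum_k \Kek \|f_k\|_{L^2(\Omega_\varepsilon)}$. I would bound each term using \cref{lem:L2 estimate fk}, dimension by dimension:
\begin{itemize}
\item For $d \le 3$, each term is $\mathrm{O}(\Kek) = \mathrm{O}(\Keo)$, which is $\mathrm{O}(\mathcal E_d(\Keo))$. For $d=3$ this uses $\Keo \le \Keo|\log\Keo|$ for small $\Keo$.
\item For $d=4$, each term is $\mathrm{O}\bigl((r_\varepsilon^{(k)})^2 |\log r_\varepsilon^{(k)}|^{1/2}\bigr) = \mathrm{O}(r_\varepsilon^{(k)}) = \mathrm{O}\bigl((\Kek)^{1/2}\bigr)$.
\item For $d \ge 5$, each term is $\mathrm{O}\bigl((r_\varepsilon^{(k)})^{d-2}(r_\varepsilon^{(k)})^{(4-d)/2}\bigr) = \mathrm{O}\bigl((r_\varepsilon^{(k)})^{d/2}\bigr) = \mathrm{O}(r_\varepsilon^{(k)}) = \mathrm{O}\bigl((\Kek)^{1/(d-2)}\bigr)$.
\end{itemize}
Summing over $k$ with the same Jensen-type bound $\sum_k (\Kek)^{1/(d-2)} = \mathrm{O}\bigl(\Keo^{1/(d-2)}\bigr)$ used in the proof of \cref{lem:quasi-mode} gives $\mathrm{O}(\mathcal E_d(\Keo))$ in all dimensions.

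I do not expect a real obstacle. The most delicate step is \ref{prop:neumann quasi-mode}, namely identifying the weak Neumann data of $f_k$ with a classical zero normal derivative on $\Gamma_{\mathcal N}^\varepsilon$. This rests on the smoothness statement of \cref{thm:fk} up to $\partial\Omega\setminus\{x^{(k)}\}$.
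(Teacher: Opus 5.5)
Your proposal is correct and follows the paper's proof essentially step for step: (a)--(c) from the regularity and weak formulation of $f_k$, (d) directly from the $L^\infty$ lemma on $\Gamma_{\mathcal D}^\varepsilon$, and (e) from the triangle inequality, the $L^2$ bounds on $f_k$, and the Jensen-type summation. The differences are minor. You spell out the passage from the weak Neumann problem to the pointwise identities $-\Delta f_k = C_d$ and $\partial_n f_k = 0$ away from $x^{(k)}$, and in (e) you compare powers of $r_\varepsilon^{(k)}$ before summing (e.g.\ $(r_\varepsilon^{(k)})^{d/2} \le r_\varepsilon^{(k)}$) rather than after, which is equivalent.
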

Let us emphasize that the bound stated in \cref{prop:L2 quasi-mode} 
is actually not sharp,
but 
it is sufficient for our purpose.
\begin{proof}
    Let us prove each point in order. \\
    \cref{prop:quasi-mode smooth} is clear from the definition of $\varphi_\varepsilon$ 
    and the regularity of $f_k$ from \cref{thm:fk}. \\
    \cref{prop:laplacian quasi-mode} is a consequence of \cref{def:fk}, 
    as
    \[
        -\Delta \varphi_\varepsilon = \sum_{k=1}^{N} \Kek (-\Delta f_k) =  \sum_{k=1}^{N} \Kek C_d.
    \]
    \cref{prop:neumann quasi-mode} is also a direct consequence of \cref{def:fk}, 
    as $\xk \not \in \Gamma_\mathcal{N}^\varepsilon$ for all $k \in \{1, \ldots, N\}$.\\
    \cref{prop:Linf quasi-mode} is the result stated in \cref{lem:quasi-mode}.\\
    Finally, \cref{prop:L2 quasi-mode} is a consequence of \cref{lem:L2 estimate fk}.
    Indeed, 
    from \eqref{eq:estimate fk_L2_lemma},
    one has that 
    \begin{equation}\label{eq:L2 quasi-mode estimate proof}
        \|\varphi_\varepsilon - 1\|_{L^2(\Omega_\varepsilon)} 
        = \left\| \sum_{k=1}^{N} \Kek f_k \right\|_{L^2(\Omega_\varepsilon)} 
        \leq \sum_{k=1}^{N} \Kek \|f_k\|_{L^2(\Omega_\varepsilon)} 
        = \left\{ \begin{aligned}
            & \mathrm{O}(\Keo)\, && \text{ for } d \leq 3,\\
            & \mathrm{O}\left(\sum_{k=1}^N \Kek \sqrt{\left|\log \Kek\right|}\right)\, && \text{ for } d = 4,\\
            & \mathrm{O}\left(\sum_{k=1}^N \left(\Kek\right)^\frac{d}{2(d-2)}\right)\, && \text{ for } d \geq 5.
        \end{aligned}
        \right.
    \end{equation}
    To conclude, it remains to show that the bounds in \eqref{eq:L2 quasi-mode estimate proof} 
    are of order $\mathcal{E}_d(\Keo)$, 
    which is clear for $d \leq 3$. 
    For $d \geq 5$, one has that $\frac{d}{2(d-2)} \in (\frac{1}{2}, \frac{5}{6}]$, 
    thus, by Jensen's inequality,
    \begin{equation}\label{eq:jensen bound d5}
        \sum_{k=1}^N \left(\Kek\right)^\frac{d}{2(d-2)} 
        = \mathrm{O}\left( \Keo^{\frac{d}{2(d-2)}}\right) 
        = \mathrm{O}\left( \Keo^{\frac{1}{d-2}}\right).
    \end{equation}
    For the case $d = 4$, using a similar argument as in \eqref{eq:log fake jensen},
    \begin{equation}\label{eq:jensen bound d4}
        \sum_{k=1}^N \Kek \sqrt{\left|\log \Kek\right|} 
        = \mathrm{O}\left( \Keo \sqrt{\left|\log \Keo\right|}\right) 
        = \mathrm{O}\left( \Keo^{\frac{1}{2}}\right),
    \end{equation}
    which allows us to conclude the proof.
\end{proof}

Now that we have built the quasi-mode $\varphi_\varepsilon$,
the last step before proving our main results is
to compute two scalar products that will come into play in the computations.
For the remainder of the paper, 
we will use $(\cdot , \, \cdot )_\mathrm{D}$ to denote the $L^2(\mathrm{D})$ scalar product 
on a domain $\mathrm{D}$.
\begin{lem}\label{prop: phi scalar u}
    There exists a constant $C > 0$ such that, for $\varepsilon$ sufficiently small,
    the quasi-mode $\varphi_\varepsilon$ (see~\eqref{eq:def phi}) and the quasi-stationary distribution $\nu_\varepsilon$ 
    (introduced in \cref{thm:QSD}) satisfy
    \begin{equation}\label{eq:scalar product u phi}
        \Bigl| \bigl( \varphi_\varepsilon , \nu_\varepsilon \bigr)_{\Omega_\varepsilon} - 1 \Bigr| \leq 
        C \mathcal{E}_d(\Keo),
    \end{equation}
    and
    $$ \Bigl| \left\langle \partial_n \nu_\varepsilon , \varphi_\varepsilon\right\rangle_{H^{-\frac{1}{2}}(\partial \Omega_\varepsilon), H^{\frac{1}{2}}(\partial \Omega_\varepsilon)} \Bigr|
    \leq C \lambda_\varepsilon^0 \,\mathcal{E}_d(\Keo).$$
\end{lem}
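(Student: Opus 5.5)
The plan is to derive both estimates from the Green identity between $\nu_\varepsilon$ and $\varphi_\varepsilon$ on $\Omega_\varepsilon$, combined with the properties of $\varphi_\varepsilon$ listed in \cref{prop:properties quasi-mode}. I would prove the second estimate first, since it requires no information on the size of $\nu_\varepsilon$.

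\textbf{Second estimate.} By \cref{cor:QSD}, $-\partial_n \nu_\varepsilon$ is a nonnegative Radon measure supported on $\overline{\Gamma_\mathcal{D}^\varepsilon}$ with total mass $\lambda_\varepsilon^0$. By \cref{prop:quasi-mode smooth} of \cref{prop:properties quasi-mode}, $\varphi_\varepsilon$ is smooth, hence continuous, on $\overline{\Omega_\varepsilon}$. The $H^{-1/2}$--$H^{1/2}$ pairing $\langle \partial_n \nu_\varepsilon, \varphi_\varepsilon\rangle$ is then the integral of $\varphi_\varepsilon$ against this measure; this identification needs a short check via \eqref{eq:normal_derivative_weak_h12}. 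The Neumann condition \eqref{eq:meaning dn nu zero} kills the contribution of $\Gamma_\mathcal{N}^\varepsilon$, so only $\Gamma_\mathcal{D}^\varepsilon$ contributes. Consequently
\[
\bigl|\langle \partial_n \nu_\varepsilon, \varphi_\varepsilon\rangle\bigr| \le \|\varphi_\varepsilon\|_{L^\infty(\Gamma_\mathcal{D}^\varepsilon)}\,\lambda_\varepsilon^0 \le C\lambda_\varepsilon^0\,\mathcal{E}_d(\Keo),
\]
where the last step is \cref{prop:Linf quasi-mode}.

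\textbf{First estimate.} Since $\int_{\Omega_\varepsilon}\nu_\varepsilon = 1$, we have $(\varphi_\varepsilon,\nu_\varepsilon)_{\Omega_\varepsilon} - 1 = (\varphi_\varepsilon - 1, \nu_\varepsilon)_{\Omega_\varepsilon}$. Cauchy--Schwarz and \cref{prop:L2 quasi-mode} give
\[
\bigl|(\varphi_\varepsilon - 1, \nu_\varepsilon)_{\Omega_\varepsilon}\bigr| \le C\,\mathcal{E}_d(\Keo)\,\|\nu_\varepsilon\|_{L^2(\Omega_\varepsilon)}.
\]
So it suffices to show that $\|\nu_\varepsilon\|_{L^2(\Omega_\varepsilon)}$ is bounded uniformly in $\varepsilon$. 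Because $\|\nu_\varepsilon\|_{L^2}^2 \le \|\nu_\varepsilon\|_{L^\infty}\|\nu_\varepsilon\|_{L^1}$, a uniform $L^\infty$ bound is enough.

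\textbf{Uniform $L^\infty$ bound on $\nu_\varepsilon$.} I would argue in two steps.
\begin{itemize}
\item First, $\lambda_\varepsilon^0$ is bounded uniformly in $\varepsilon$. One could use a Rayleigh quotient with a test function equal to $1$ away from the holes, or simply the monotonicity $\lambda_\varepsilon^0 \le \lambda^0_{\varepsilon_0}$ for $\varepsilon<\varepsilon_0$ when the radii decrease. Alternatively, note that the identity below combined with \cref{cor:QSD} gives the bound a posteriori.
\item Second, $\nu_\varepsilon = e^{\lambda_\varepsilon^0 t}P_t^{\mathrm{kill}}\nu_\varepsilon$ for the killed semigroup, and the killed semigroup is dominated pointwise by the reflected (Neumann) semigroup on $\Omega_\varepsilon$. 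This yields
\[
\|\nu_\varepsilon\|_{L^\infty} \le e^{\lambda_\varepsilon^0}\,\sup_{x,y} p^{\mathrm{refl},\varepsilon}_1(x,y)\,\|\nu_\varepsilon\|_{L^1}.
\]
\end{itemize}

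\textbf{Main obstacle.} The hard step is a heat kernel bound $\sup_{x,y} p^{\mathrm{refl},\varepsilon}_1(x,y) \le C$ uniform in $\varepsilon$. I would obtain it from a Nash (or Sobolev) inequality on $H^1(\Omega_\varepsilon)$ with constants independent of $\varepsilon$. This in turn follows from uniformly bounded $H^1$ extension operators $H^1(\Omega_\varepsilon)\to H^1(\mathbb{R}^d)$. The domains $\Omega_\varepsilon$ are uniformly Lipschitz near the holes: a small spherical cap cut from a smooth boundary, after rescaling by $\re{(k)}$, looks like a half-space minus a half-ball. Constructing that uniform extension, for instance by reflection across $\Gamma_k^\varepsilon$ in rescaled coordinates, is where I expect the real technical work to lie.

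\textbf{Green identity (consistency check).} For $\nu_\varepsilon \in \mathrm{D}(\mathcal{L}_\varepsilon)$ and smooth $\varphi_\varepsilon$, \eqref{eq:normal_derivative_weak_h12} applied twice gives
\[
\int_{\Omega_\varepsilon} \bigl(\varphi_\varepsilon\Delta\nu_\varepsilon - \nu_\varepsilon\Delta\varphi_\varepsilon\bigr) = \langle \partial_n\nu_\varepsilon,\varphi_\varepsilon\rangle - \int_{\partial\Omega_\varepsilon}\nu_\varepsilon\,\partial_n\varphi_\varepsilon.
\]
The last term vanishes: $\nu_\varepsilon=0$ on $\Gamma_\mathcal{D}^\varepsilon$, and $\partial_n\varphi_\varepsilon=0$ on $\Gamma_\mathcal{N}^\varepsilon$ by \cref{prop:neumann quasi-mode}. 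Using $-\Delta\nu_\varepsilon=\lambda_\varepsilon^0\nu_\varepsilon$ and \cref{prop:laplacian quasi-mode}, this becomes
\[
C_d\Keo - \lambda_\varepsilon^0(\varphi_\varepsilon,\nu_\varepsilon)_{\Omega_\varepsilon} = \langle\partial_n\nu_\varepsilon,\varphi_\varepsilon\rangle.
\]
This identity is not needed for the lemma itself. It is the relation the two bounds are designed to feed into, and it also gives the a priori bound on $\lambda_\varepsilon^0$ used in the first bullet above.
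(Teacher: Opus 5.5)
Your proof of the second estimate, and your reduction of the first one to a bound on $\|\nu_\varepsilon\|_{L^2(\Omega_\varepsilon)}$ uniform in $\varepsilon$ (via $\int_{\Omega_\varepsilon}\nu_\varepsilon=1$, Cauchy--Schwarz and \cref{prop:L2 quasi-mode}), are exactly the paper's argument. The only divergence is how that $L^2$ bound is obtained, and that is where your proposal leaves its key step open. You pass through a uniform $L^\infty$ bound. That requires a uniform-in-$\varepsilon$ bound on the reflected heat kernel of $\Omega_\varepsilon$, which in turn requires a uniform Nash/Sobolev inequality, which you would derive from uniform extension operators that you only announce. This chain can be completed (uniform Sobolev constants are available under a uniform cone condition, and Sobolev implies ultracontractivity), but it is far more than is needed.

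The paper uses only the uniform Poincar\'e--Wirtinger inequality on $\Omega_\varepsilon$ (uniform interior cone condition, \cite[Theorem 1.2]{Ruiz}). Combined with $\|\nabla\nu_\varepsilon\|_{L^2}^2=\lambda_\varepsilon^0\|\nu_\varepsilon\|_{L^2}^2$ and $\int_{\Omega_\varepsilon}\nu_\varepsilon=1$, it gives
\[
\|\nu_\varepsilon\|_{L^2(\Omega_\varepsilon)}^2-\frac{1}{|\Omega_\varepsilon|}
=\Bigl\|\nu_\varepsilon-\frac{1}{|\Omega_\varepsilon|}\Bigr\|_{L^2(\Omega_\varepsilon)}^2
\le C_{\mathrm{P}}\,\lambda_\varepsilon^0\,\|\nu_\varepsilon\|_{L^2(\Omega_\varepsilon)}^2 .
\]
Hence $\|\nu_\varepsilon\|_{L^2(\Omega_\varepsilon)}^2\le 2/|\Omega_\varepsilon|$ once $C_{\mathrm{P}}\lambda_\varepsilon^0\le 1/2$, which holds for small $\varepsilon$ by \cref{lem:eigenvalues}.

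The trade-off is as follows. The paper needs $\lambda_\varepsilon^0\to0$ rather than merely bounded, but only a spectral-gap-type inequality. Your route needs a stronger functional inequality, but it would give $\sup_\varepsilon\|\nu_\varepsilon\|_{L^\infty}<\infty$. That would even yield $|(\varphi_\varepsilon-1,\nu_\varepsilon)_{\Omega_\varepsilon}|\le\|\nu_\varepsilon\|_{L^\infty}\|\varphi_\varepsilon-1\|_{L^1(\Omega_\varepsilon)}=\mathrm{O}(\Keo)$, since $f_k\in L^1(\Omega)$. This is a sharper first estimate, though it does not improve the main theorems, because the second estimate still carries $\mathcal{E}_d(\Keo)$.

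One correction concerns how you bound $\lambda_\varepsilon^0$. Your ``a posteriori'' bound from the Green identity is circular. Extracting a bound on $\lambda_\varepsilon^0$ from $C_d\Keo-\lambda_\varepsilon^0(\varphi_\varepsilon,\nu_\varepsilon)_{\Omega_\varepsilon}=\langle\partial_n\nu_\varepsilon,\varphi_\varepsilon\rangle$ requires a lower bound on $(\varphi_\varepsilon,\nu_\varepsilon)_{\Omega_\varepsilon}$, i.e.\ the first estimate, and in your scheme that estimate already uses the bound on $\lambda_\varepsilon^0$ through the factor $\mathrm{e}^{\lambda_\varepsilon^0}$. Your monotonicity argument also presumes that the radii decrease with $\varepsilon$, which is not assumed. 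The right input is the Rayleigh-quotient bound $\lambda_\varepsilon^0\le C\Keo$ of \cref{lem:eigenvalues}, which is your first option.
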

\begin{proof}
    Let us first derive an upper bound for the $L^2$ norm of $\nu_\varepsilon$.
    Since $\Omega_\varepsilon$ is uniformly bounded and satisfies a uniform interior cone condition, 
    the Poincaré--Wirtinger inequality holds in $\Omega_\varepsilon$ 
    for some constant $C>0$ independent of $\varepsilon$ (see \cite[Theorem 1.2]{Ruiz}).
    Applying this inequality to $\nu_\varepsilon$ gives
    \begin{align}\label{pw}
        \left\|\nu_\varepsilon-\frac{1}{|\Omega_\varepsilon|}\left( \nu_\varepsilon  , 1\right)\right\|_{L^2(\Omega_\varepsilon)}^2
        \leq C \|\nabla \nu_\varepsilon\|_{L^2(\Omega_\varepsilon)}^2 = C \lambda_\varepsilon^0 \| \nu_\varepsilon\|_{L^2(\Omega_\varepsilon)}^2,
    \end{align}
    where the last equality is a consequence of the fact that $\nu_\varepsilon$ is an eigenfunction of the Laplacian with eigenvalue $\lambda_\varepsilon^0$.
    Since $\left\lVert \nu_\varepsilon \right\rVert_{L^1(\Omega_\varepsilon)} = 1$, the left-hand side satisfies
    \begin{align}\label{lhs}
        \left\|\nu_\varepsilon-\frac{1}{|\Omega_\varepsilon|}\left( \nu_\varepsilon  , 1\right)\right\|_{L^2(\Omega_\varepsilon)}^2 = \|\nu_\varepsilon\|_{L^2(\Omega_\varepsilon)}^2-\frac{1}{|\Omega_\varepsilon|} .
    \end{align}
    Combining \eqref{pw} and 
    \eqref{lhs},
    we obtain
    \begin{align}\label{mino}
        \|\nu_\varepsilon\|_{L^2(\Omega_\varepsilon)}^2 (1 - C \lambda_\varepsilon^0) 
        \leq  |\Omega_\varepsilon|^{-1}.
    \end{align}
    Since 
    \[
        |\Omega| = |\Omega_\varepsilon| + \mathrm{O}\Biggl( \sum_{k=1}^N \bigl(\re{(k)}\bigr)^d\Biggr),
    \]
    and since $1 - C \lambda_\varepsilon^0 \geq \frac{1}{2}$ for $\varepsilon$ sufficiently small given that
    $\lambda_\varepsilon^0 \to 0$ when $\varepsilon \to 0$ by~\cref{lem:eigenvalues},
    the inequality \eqref{mino} leads to 
    \begin{equation} \label{eq:bound u_0}
        \|\nu_\varepsilon\|_{L^2(\Omega_\varepsilon)} \leq C',
    \end{equation}
    for some constant $C'>0$, independent of $\varepsilon$.

    We can then prove \eqref{eq:scalar product u phi} using the Cauchy--Schwarz inequality, 
    together with the bound on $\nu_\varepsilon$ obtained in \eqref{eq:bound u_0}
    and \cref{prop:L2 quasi-mode} from \cref{prop:properties quasi-mode}:
    $$ 
        \left| \bigl( \varphi_\varepsilon , \nu_\varepsilon \bigr)_{\Omega_\varepsilon} - 1\right|
        = \left| \bigl( \varphi_\varepsilon - 1, \nu_\varepsilon \bigr)_{\Omega_\varepsilon}\right|
        \leq \left\| \varphi_\varepsilon  - 1\right\|_{L^2(\Omega_\varepsilon)}\left\| \nu_\varepsilon \right\|_{L^2(\Omega_\varepsilon)}
        \leq C \mathcal{E}_d(\Keo).
    $$

    For the second statement, 
    let us first notice that $\bigl\langle \partial_n \nu_\varepsilon , \varphi_\varepsilon\bigr\rangle_{H^{-\frac{1}{2}}(\partial \Omega_\varepsilon), H^{\frac{1}{2}}(\partial \Omega_\varepsilon)} $ is well-defined by \cref{prop:quasi-mode smooth} of \cref{prop:properties quasi-mode} and~\cref{thm:QSD}. 
    Furthermore, since $\partial_n \nu_\varepsilon$ is a Radon measure on $\partial \Omega_\varepsilon$ (see \cref{cor:QSD})
    \[
        \bigl\langle \partial_n \nu_\varepsilon , \varphi_\varepsilon\bigr\rangle_{H^{-\frac{1}{2}}(\partial \Omega_\varepsilon), H^{\frac{1}{2}}(\partial \Omega_\varepsilon)} 
        = \int_{\partial \Omega_\varepsilon} \varphi_\varepsilon \, \d \left( \partial_n \nu_\varepsilon\right)
        = \int_{\Gamma_\mathcal{D}^\varepsilon} \varphi_\varepsilon \, \d \left( \partial_n \nu_\varepsilon\right),
    \]
    where we used that $\partial_n \nu_\varepsilon$ is supported on $\overline{\Gamma_\mathcal{D}^\varepsilon}$.
     Then,
    $$
    \Bigl| \bigl\langle \partial_n \nu_\varepsilon , \varphi_\varepsilon\bigr\rangle_{H^{-\frac{1}{2}}(\partial \Omega_\varepsilon), H^{\frac{1}{2}}(\partial \Omega_\varepsilon)} \Bigr|
    \leq \|\varphi_\varepsilon\|_{L^\infty(\Gamma_\mathcal{D}^\varepsilon)} \left| \partial_n \nu_\varepsilon (\Gamma_\mathcal{D}^\varepsilon)\right|.
    $$
    The norm $\|\varphi_\varepsilon\|_{L^\infty(\Gamma_\mathcal{D}^\varepsilon)}$ 
    is controlled by \cref{prop:Linf quasi-mode} of \cref{prop:properties quasi-mode}
    and, from \cref{cor:QSD}, $\left| \partial_n \nu_\varepsilon (\Gamma_\mathcal{D}^\varepsilon)\right| = \lambda_\varepsilon^0$. 
    This concludes the proof.
\end{proof}

\subsection{Mean exit time}
\label{sec:mean exit}
    We can now proceed with the proofs of the main results. 
    Let us start with the estimate on the first eigenvalue stated in \cref{thm:lam0}

    \begin{proof}[Proof of \cref{thm:lam0}]
        Notice that the first point of \cref{prop: phi scalar u} ensures that $\bigl( \varphi_\varepsilon, \, \nu_\varepsilon \bigr)_{\Omega_\varepsilon} \neq 0$ 
        for $\varepsilon$ small enough.
        Thus, 
        \begin{align}\label{lam01}
            \lambda_\varepsilon^0
            = \frac{\lambda_\varepsilon^0 \bigl( \varphi_\varepsilon, \nu_\varepsilon \bigr)_{\Omega_\varepsilon}}{ \bigl( \varphi_\varepsilon, \nu_\varepsilon \bigr)_{\Omega_\varepsilon}}
            = \frac{ \bigl( \varphi_\varepsilon, -\Delta \nu_\varepsilon \bigr)_{\Omega_\varepsilon}}{ \bigl( \varphi_\varepsilon, \nu_\varepsilon \bigr)_{\Omega_\varepsilon}}.
        \end{align}
        Now, by the Green formula, allowed by \cref{prop:quasi-mode smooth} of \cref{prop:properties quasi-mode},
        the fact $\nu_\varepsilon \in \mathcal{D}(\mathcal{L}_\varepsilon)$ and \eqref{eq:normal_derivative_weak_h12}, we have
        \begin{align*}
            \bigl( \varphi_\varepsilon, -\Delta \nu_\varepsilon \bigr)_{\Omega_\varepsilon} 
            &= \bigl( -\Delta \varphi_\varepsilon, \nu_\varepsilon \bigr)_{\Omega_\varepsilon} - \left\langle \partial_n \nu_\varepsilon , \varphi_\varepsilon\right\rangle_{H^{-\frac{1}{2}}(\partial \Omega_\varepsilon), H^{\frac{1}{2}}(\partial \Omega_\varepsilon)}
             + \bigl( \partial_n \varphi_\varepsilon,  \nu_\varepsilon \bigr)_{\partial \Omega_\varepsilon}  \\
            &=C_d \Keo \bigl( 1, \nu_\varepsilon \bigr)_{\Omega_\varepsilon} 
            - \left\langle \partial_n \nu_\varepsilon , \varphi_\varepsilon\right\rangle_{H^{-\frac{1}{2}}(\partial \Omega_\varepsilon), H^{\frac{1}{2}}(\partial \Omega_\varepsilon)} + 0 \, ,
        \end{align*}
        where we used \cref{prop:laplacian quasi-mode} of \cref{prop:properties quasi-mode} in the second line,
        together with the fact that
        the last term is null since $\partial_n \varphi_\varepsilon$ is $0$ on $\Gamma_\mathcal{N}^\varepsilon$
        while~$\nu_\varepsilon$ is $0$ on $\Gamma_\mathcal{D}^\varepsilon$. 
        This shows that \eqref{lam01} can be rewritten as
        \begin{equation} \label{lam02}
        \lambda_\varepsilon^0
        = \frac{C_d \Keo}{\bigl( \varphi_\varepsilon, \nu_\varepsilon \bigr)_{\Omega_\varepsilon}} - \frac{\left\langle \partial_n \nu_\varepsilon , \varphi_\varepsilon\right\rangle_{H^{-\frac{1}{2}}(\partial \Omega_\varepsilon), H^{\frac{1}{2}}(\partial \Omega_\varepsilon)}}{\bigl( \varphi_\varepsilon, \nu_\varepsilon \bigr)_{\Omega_\varepsilon}}.
        \end{equation}
        Using both results of \cref{prop: phi scalar u},
        one gets that, for $\varepsilon$ sufficiently small,
        \begin{align*}
            \lambda_\varepsilon^0 
            & = \frac{C_d \Keo + \lambda_\varepsilon^0 \, \mathrm{O}(\mathcal{E}_d(\Keo))}{1 + \mathrm{O}(\mathcal{E}_d(\Keo))}.
        \end{align*}
        Finally, 
        isolating $\lambda_\varepsilon^0$ in the previous equation leads to
        \begin{equation*}
            \lambda_\varepsilon^0 = C_d \Keo \left( 1 + \mathrm{O}(\mathcal{E}_d(\Keo))\right)^{-1} = C_d \Keo + \mathrm{O}(\Keo \mathcal{E}_d(\Keo)),
        \end{equation*}
        which is the claimed result.
    \end{proof}

\subsection{Exit hole distribution}
\label{sec:exit hole}
Let us now move to the proof of \cref{thm:exit hole distribution}.
The general strategy is similar to the one used in the proof of \cref{thm:lam0}.
However,
we use a slightly different quasi-mode,
corresponding to the setting where one hole has been removed. 
By comparing this modified quasi-mode with the quasi-stationary distribution $\nu_\varepsilon$, 
one can extract the probability to exit through the removed hole.
Naturally, in this subsection, one assumes $N \geq 2$.

\begin{proof}[Proof of \cref{thm:exit hole distribution}]
From \cref{cor:QSD},
one has that, for any $k_0 \in \{ 1, \ldots, N \}$,
$$\mathbb{P}_{\nu_\varepsilon} \Big( X_{\tau_\varepsilon} \in \Gamma_{k_0}^\varepsilon \Big) 
= - \frac{ 1}{\lambda_\varepsilon^0} \int_{\Gamma_{k_0}^\varepsilon} \mathrm{d}\bigl( \partial_n \nu_\varepsilon \bigr).$$
From the expression of $\lambda_\varepsilon^0$ from \cref{thm:lam0},
one only needs to show that
\begin{align}\label{amq}
    \int_{\Gamma_{k_0}^\varepsilon} \mathrm{d}\bigl( \partial_n \nu_\varepsilon \bigr) = \int_{\Gamma_{\mathcal{D}}^\varepsilon} \mathbbm{1}_{\Gamma_{k_0}^\varepsilon} \mathrm{d}\bigl( \partial_n \nu_\varepsilon \bigr) = - C_d K_\varepsilon^{(k_0)} + \mathrm{O}\left( \Keo \mathcal{E}_d(\Keo)\right).
\end{align}
Consider now the quasi-mode $\hat{\varphi}_{-k_0}^\varepsilon$ defined as
\begin{align}\label{hatphi}
 \hat{\varphi}_{-k_0}^\varepsilon (x) = 1+ \sum_{k\in \llbracket 1,N \rrbracket \setminus \{k_0\}} \Kek
 f_k(x).
\end{align}
This is the quasi-mode associated with the narrow escape problem where the hole centered at $x^{(k_0)}$ has been removed.
Naturally, \cref{prop:properties quasi-mode} still holds true for this quasi-mode.
In particular, 
\begin{equation}\label{eq:hatphidirichlet1}
    \|\hat{\varphi}_{-k_0}^\varepsilon \|_{L^\infty(\Gamma_{\mathcal{D}}^\varepsilon \setminus \Gamma_{k_0}^\varepsilon)}= 
    \mathrm{O}{\left(\mathcal{E}_d\bigl(\overline{K}_\varepsilon^{(-k_0)}\bigr)\right)}, 
\end{equation}
where $\overline{K}_\varepsilon^{(-k_0)} = \sum_{k\in \llbracket 1,N \rrbracket \setminus \{k_0\}} \Kek$.
Moreover, 
the quasi-mode $\hat{\varphi}_{-k_0}^\varepsilon$ is close to $1$ on $\Gamma_{k_0}^\varepsilon$,
as, 
\begin{equation}\label{eq:hatphidirichlet2}
    \| \hat{\varphi}_{-k_0}^\varepsilon - 1\|_{L^\infty(\Gamma_{k_0}^\varepsilon)}
    \leq \sum_{k\in \llbracket 1,N \rrbracket \setminus \{k_0\}} \Kek \|f_k \|_{L^\infty(\Gamma_{k_0}^\varepsilon)}
    = \mathrm{O}{\left( \overline{K}_\varepsilon^{(-k_0)}\right)} = \mathrm{O}{\left( \mathcal{E}_d(\overline{K}_\varepsilon^{(-k_0)}) \right)},
\end{equation}
in the limit $\varepsilon \to 0$.
Here, we used that $f_k$ is smooth on $\overline{\Omega} \setminus \{x^{(k)}\}$.

The idea of the proof is to decompose the integral in \eqref{amq} into two parts,
\begin{equation}\label{eq:decomposition integral}
\begin{aligned}
    \int_{\Gamma_{\mathcal{D}}^\varepsilon} \mathbbm{1}_{\Gamma_{k_0}^\varepsilon} \mathrm{d}\bigl( \partial_n \nu_\varepsilon \bigr) 
    = \int_{\Gamma_{\mathcal{D}}^\varepsilon} \hat{\varphi}_{-k_0}^\varepsilon \mathrm{d}\bigl( \partial_n \nu_\varepsilon \bigr) 
    + \int_{\Gamma_{\mathcal{D}}^\varepsilon} \left(\mathbbm{1}_{\Gamma_{k_0}^\varepsilon} - \hat{\varphi}_{-k_0}^\varepsilon\right)
    \mathrm{d}\bigl( \partial_n \nu_\varepsilon \bigr).
\end{aligned}
\end{equation}
In view of \eqref{eq:hatphidirichlet1} and \eqref{eq:hatphidirichlet2},
the second integral in \eqref{eq:decomposition integral} can be bounded as follows:
\begin{equation}\label{phi+rho}
    \begin{aligned}
    \Bigl| \int_{\Gamma_{\mathcal{D}}^\varepsilon} 
    \left(\mathbbm{1}_{\Gamma_{k_0}^\varepsilon} - \hat{\varphi}_{-k_0}^\varepsilon\right)
    \mathrm{d}\bigl( \partial_n \nu_\varepsilon \bigr)  \Bigr|
    & \leq \left| \partial_n \nu_\varepsilon(\Gamma_{\mathcal{D}}^\varepsilon) \right|
    \big\| \mathbbm{1}_{\Gamma^{\varepsilon}_{k_0}} - \hat{\varphi}_{-k_0}^\varepsilon \big\|_{L^\infty(\Gamma_{\mathcal{D}}^\varepsilon)}\\ 
    &  = \left| \partial_n \nu_\varepsilon(\Gamma_{\mathcal{D}}^\varepsilon) \right|\, \mathrm{O}{\Bigl( \mathcal{E}_d\bigl(\overline{K}_\varepsilon^{(-k_0)}\bigr) \Bigr)}\\
    &  = \lambda_\varepsilon^0 \, \mathrm{O}{\left( \mathcal{E}_d\bigl(\overline{K}_\varepsilon^{(-k_0)}\bigr) \right)}.
    \end{aligned}
\end{equation}
We used in the last equality the value of $\partial_n \nu_\varepsilon(\Gamma_{\mathcal{D}}^\varepsilon)$ 
obtained from \cref{cor:QSD}.
It remains to estimate the duality product between $\partial_n \nu_\varepsilon$ and $\hat{\varphi}_{-k_0}^\varepsilon$.
Using similar techniques as in the proof of \cref{thm:lam0}:
 \begin{equation}\label{eq:computation duality product}
        \begin{aligned}
            \int_{\Gamma_{\mathcal{D}}^\varepsilon} \hat{\varphi}_{-k_0}^\varepsilon \mathrm{d}\bigl( \partial_n \nu_\varepsilon \bigr) 
            & = \bigl( \Delta \nu_\varepsilon , \hat{\varphi}_{-k_0}^\varepsilon \bigr)_{\Omega_\varepsilon}
            - \bigl(  \nu_\varepsilon ,  \Delta \hat{\varphi}_{-k_0}^\varepsilon \bigr)_{\Omega_\varepsilon}   \\
            &= - \lambda_\varepsilon^0 
            \bigl(  \nu_\varepsilon ,  \hat{\varphi}_{-k_0}^\varepsilon \bigr)_{\Omega_\varepsilon}  
            +C_d \overline{K}_\varepsilon^{(-k_0)} \bigl( \nu_\varepsilon, 1\bigr)_{\Omega_\varepsilon} \\
            &= - C_d K_\varepsilon^{(k_0)} - \lambda_\varepsilon^0 \bigl(  \nu_\varepsilon ,  \hat{\varphi}_{-k_0}^\varepsilon - 1 \bigr)_{\Omega_\varepsilon} + \left( C_d \Keo - \lambda_\varepsilon^0 \right). 
            \\
        \end{aligned}
    \end{equation}
    The first line relies on the fact that the support of $\partial_n \nu_\varepsilon$ is included in $\overline{\Gamma_\mathcal{D}^\varepsilon}$,
    while the second line is obtained by using the eigenvalue equation \eqref{eq:eigen} for $\nu_\varepsilon$ and for $\hat{\varphi}_{-k_0}^\varepsilon$ (\cref{prop:laplacian quasi-mode} of \cref{prop:properties quasi-mode}). 
    In a similar fashion as in the proof of \cref{prop: phi scalar u}, 
    the scalar product $\bigl(  \nu_\varepsilon ,  \hat{\varphi}_{-k_0}^\varepsilon \bigr)_{\Omega_\varepsilon}$ can be estimated by using the Cauchy--Schwarz inequality.
    Indeed, there exists a constant $C>0$ such that
    \[
        \left|\bigl(  \nu_\varepsilon ,  \hat{\varphi}_{-k_0}^\varepsilon - 1\bigr)_{\Omega_\varepsilon}  \right|
        \leq \left\| \nu_\varepsilon \right\|_{L^2(\Omega_\varepsilon)} \left\| \hat{\varphi}_{-k_0}^\varepsilon - 1 \right\|_{L^2(\Omega_\varepsilon)}
        \leq C \mathcal{E}_d(\overline{K}_\varepsilon^{(-k_0)}),
    \]
    where we used the bound on $\nu_\varepsilon$ from \eqref{eq:bound u_0} and the $L^2$ bound on $\hat{\varphi}_{-k_0}^\varepsilon - 1$ from \cref{prop:L2 quasi-mode} of \cref{prop:properties quasi-mode}.
    Plugging this estimate and the estimate of $\lambda_\varepsilon^0$ from \cref{thm:lam0} into \eqref{eq:computation duality product},
    in addition to \eqref{phi+rho} and \eqref{eq:decomposition integral},
    leads to the desired result~\eqref{amq}.
\end{proof}

\section{Numerical results}
\label{sec:numerics}
The goal of this section is to illustrate the main results of this work, 
namely \cref{thm:lam0,thm:exit hole distribution}.
We first start by discussing the numerical methods we use in \cref{sec:numerical methods}.
Then, we present the numerical results on the influence of the initial distribution in \cref{sec:initial distribution}, on the influence of the dimension on the mean exit time in \cref{sec:mean exit time},
on the influence of the shape of the domain in \cref{sec:various domains} and finally an illustration of \cref{rq:exit hole distribution} 
on the exit hole distribution in \cref{sec:exit hole dist}.

\subsection{Numerical methods} 
\label{sec:numerical methods}
Traditionally, 
eigenvalue problems have been investigated numerically using the Finite Element Method (FEM)~\cite{Tony2024}.
In \cref{sec:various domains}, 
we use the implementation of the FEM provided by the Gridap library~\cite{Badia2020} 
to compute the two smallest eigenvalues $\lambda_\varepsilon^0$ and $\lambda_\varepsilon^1$ 
and the associated eigenfunctions~$u_\varepsilon^0$ and~$u_\varepsilon^1$.
The domain is meshed adaptively near the holes using the Gmsh library~\cite{Geuzaine}.

\paragraph{Monte Carlo methods.}
FEM implementations are typically restricted to dimensions $d \leq 3$,
whereas our results hold for larger dimensions.
Monte Carlo methods are a natural alternative to FEM in higher dimensions, 
and have already been applied to the narrow escape problem in \cite{Caginalp} (see \cite{Burdzy} for a more general reference on reflected Brownian motion).
However, a naive implementation is not efficient enough to reach the asymptotic regime of the narrow escape problem.
To illustrate this, consider the Euler--Maruyama scheme with a time step $h$ and a number of steps $N_\mathrm{step}$.
The reflected Brownian motion is simulated by rejecting all proposed steps that would lead the particle outside of the domain $\Omega_\varepsilon$ by $\Gamma_{\mathcal{N}}^\varepsilon$.
A rough estimate for the average number of steps required to simulate one trajectory until exit is, for~$d \geq 3$,
\[
    N_\mathrm{step} = \frac{\left(\lambda_\varepsilon^0\right)^{-1}}{h} \propto r_\varepsilon^{-d},
\]
using $h \propto r_\varepsilon^2$ as a reasonable upper bound for the time step, 
so the sizes of the holes are comparable to the average step size.
Experimentally, we found that $r_\varepsilon = 10^{-2}$ was the maximum hole size we could consider
to have a chance to observe the asymptotic regime for both the exit time and the exit hole distribution in dimension greater than $3$.
But already in dimension $d=3$, this leads to Monte Carlo simulations requiring billions of steps,
rendering them too slow to compute statistics.

In \cref{rema:Yaglom}, 
we discussed that the process $(X_t)_{t \geq 0}$ conditioned on the event $\{\tau_\varepsilon > t\}$ converges in law to the quasi-stationary distribution $\nu_\varepsilon$ as $t \to \infty$.
In practice,
after a short mixing time,
simulated particles will be distributed according to $\nu_\varepsilon$, which is almost uniform in the domain $\Omega_\varepsilon$.
Hence,
most of the computation time is spent far away from the holes, in the bulk of the domain,
where a very small time step is not required.
The algorithm we propose relies on the previous observations and on the Walk on Spheres (WoS) method \cite[Definition 2.2]{Muller1956}.
A detailed description in pseudocode of the algorithm we used can be found in \cref{alg:wos}.
We next discuss more precisely the various parts of the numerical method. 

\begin{algorithm}
    \caption{Sampling mean exit time and exit hole with Walk on Spheres (WoS) and Euler--Maruyama}
    \label{alg:wos}
    \begin{algorithmic}[1]
    \Require Domain $\Omega_\varepsilon$, time step $h$, initial position $x_0$
    
    \State Initialize $x \gets x_0$, $t \gets 0$
    \While{$x \in \Omega_\varepsilon$}
        \State Compute distance $\mathrm{d}_\text{boundary}$ to the boundary of $\Omega_\varepsilon$
        \If{$\mathrm{d}_\text{boundary} > 20\sqrt{h}$ } \Comment{WoS step}
            \State Sample a random direction $u \sim \partial B(0,1)$
            \State $x \gets x + \mathrm{d}_\text{boundary} \, u$
            \State $\Delta t \gets \mathrm{d}_\text{boundary}^{\, 2}/ (2d)$
        \Else \Comment{Euler--Maruyama step}
            \State Sample a random vector $Z \sim \mathcal{N}(0,\I_d)$
            \If {$x + \sqrt{2h} \, Z$ is inside $\overline{\Omega}$}
                \State $x \gets x + \sqrt{2h} \, Z$
            \EndIf
            \State $\Delta t \gets h$
        \EndIf
        \State $t \gets t + \Delta t$
    \EndWhile
    \State \Return $(t, \, \underset{k \in \{1, \ldots, \, N \}}{\mathrm{argmin}}|x - x^{(k)}|)$
    \end{algorithmic}
\end{algorithm}

\paragraph{The Walk on Spheres method.} 
\cref{alg:wos} uses the WoS method when the particle is 
far from the holes and the Neumann reflective parts of the boundary,
in order to accelerate the simulation.
When the particle is close to $\partial \Omega_\varepsilon$, 
we switch to the Euler--Maruyama method with a small time step.

To be more precise, 
at each step, 
we compute the distance $\mathrm{d}_\text{boundary}$ to the closest boundary (Neumann or Dirichlet).
If this distance is larger than $20 \sqrt{h}$, a large distance compared to the typical Euler--Maruyama step size,
we perform a \textit{large} WoS step of size~$\mathrm{d}_\text{boundary}$. 
The idea for WoS is the following:
we are sampling the exit event of a Brownian motion from a ball of radius $\mathrm{d}_\text{boundary}$,
starting from its center. 
Both the exit point and the exit time are random variables,
but they are independent and their distribution is known explicitly (see \cite[Section 3]{Muller1956}).
Hence, 
starting from a point $x_n$ at time $t_n$, 
we update the position and time as follows:

\begin{equation}\label{eq:wos step}\begin{aligned}
    x_{n+1} & =  x_n + \mathrm{d}_\text{boundary} \, u \quad \text{with} \quad u \sim \partial B(0,1), \\
    t_{n+1} & =  t_n + \frac{\mathrm{d}_\text{boundary}^2}{2d}.
\end{aligned}\end{equation}
Notice that in the second equation in \eqref{eq:wos step}, 
we increment the elapsed time by the mean exit time of the Brownian motion from the ball of radius $\mathrm{d}_\text{boundary}$.
We could also sample it from its distribution. 
However, by the linearity of the expectation, 
it is sufficient to add the mean exit times to obtain 
an estimate of the mean exit time $\mathbb{E}[\tau_\varepsilon]$.

Otherwise, 
if the process is too close to the boundary or a hole,
we perform a standard Euler--Maruyama step with time step $h$.
If an Euler--Maruyama step moves the particle outside the domain~$\Omega$, the step is rejected and $h$ is added to the time.

This method provides a way to implement an adaptive time step method for reflected Brownian motion.
For alternative options and error estimates see \cite{Hoel2023}.
Experimentally, we found the algorithm allows us to greatly reduce the number of steps required to simulate exit events compared to a naive Euler--Maruyama method,
by a factor of order $1000$ (see \cref{fig:wos vs em} for an illustration).

We used this approach to produce all the non-FEM numerical results of this section. 
Unless stated otherwise, all the results produced with \cref{alg:wos} have been obtained
with $h = 10^{-11}$ for the time step, averages over at least $10^3$ realizations and
a Dirac mass at the center of the domain as initial condition. 
The implementation of the algorithm has been done in Rust \cite{rust}, 
using the \texttt{nalgebra} crate \cite{nalgebra} and the parallelism provided by the \texttt{Rayon} crate \cite{rayon}.

\begin{figure}
    \centering
    \includegraphics[width=0.8\textwidth]{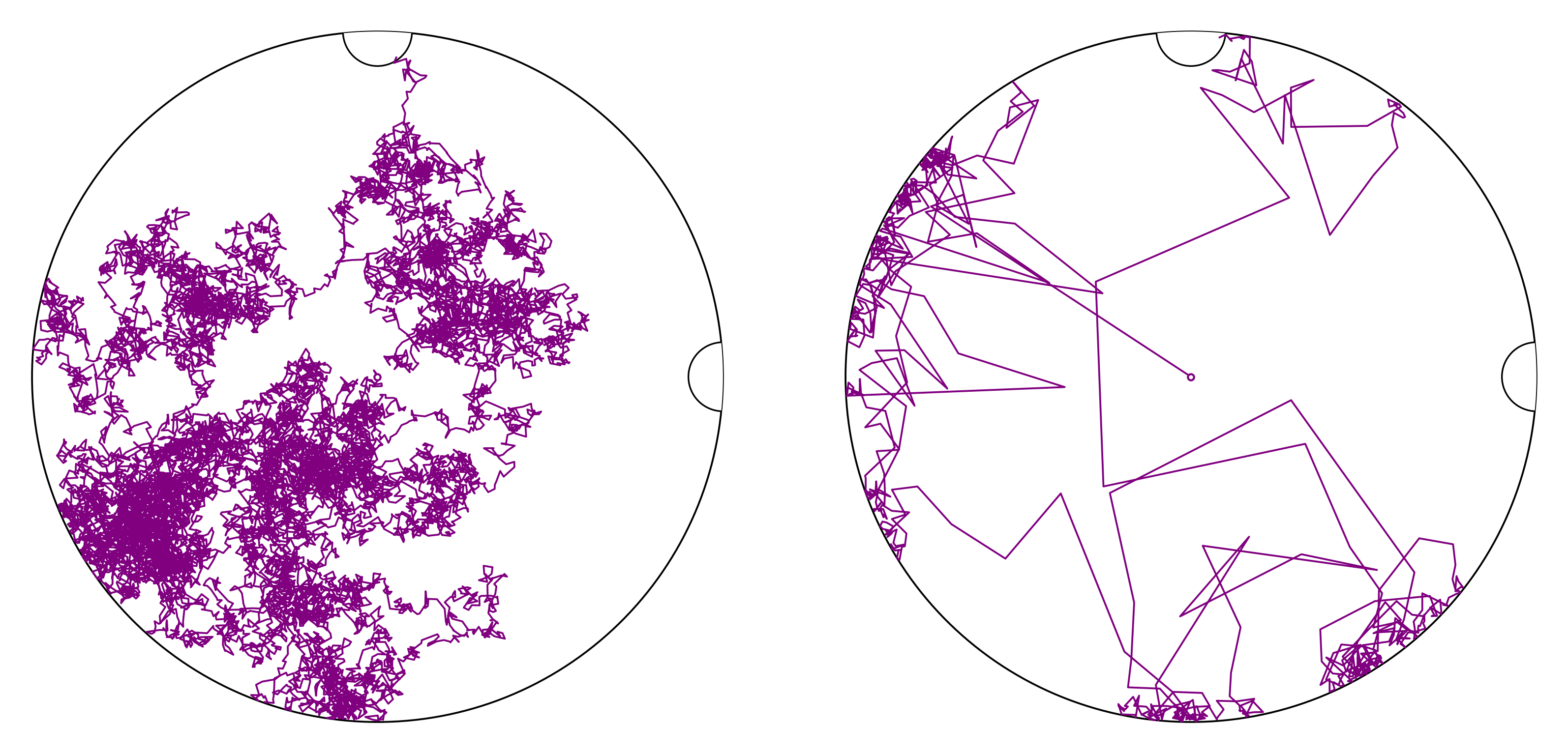}
    \caption{Trajectories from a naive Euler--Maruyama algorithm (left) and \cref{alg:wos} (right),
     for a two-dimensional disk with two holes of radius $r_\varepsilon = 10^{-1}$. 
     As an illustration, the time step $h$ is set to $10^{-4}$.}
    \label{fig:wos vs em}
\end{figure}

\subsection{Influence of the initial distribution}
\label{sec:initial distribution}
In this section, we illustrate the influence of the initial distribution on the exit time distribution. 
Indeed, both \cref{thm:lam0,thm:exit hole distribution} are stated for particles starting from the quasi-stationary distribution $\nu_\varepsilon$,
whereas, in practice, the initial distribution is a Dirac mass at a given point $x_0 \in \Omega_\varepsilon$ in \cref{alg:wos}.
As discussed in \cref{rema:Yaglom},
one nevertheless expects the law of the process to quickly relax to the quasi-stationary distribution $\nu_\varepsilon$ before exiting.

To illustrate this, we consider the unit disk in dimension $2$ with two holes of identical radius $r_\varepsilon$. 
We compare $\lambda_\varepsilon^0$, first computed through finite element methods, with the results obtained with \cref{alg:wos} for three different initial positions $x_0$:
the center of the domain, one position closer to the holes, and one farther.
\Cref{fig:initial distribution} shows that, when the particles are started from the center of the domain,
the results obtained with \cref{alg:wos} are in very good agreement with the eigenvalue computed with FEM,
despite the difference between the initial distributions.
For the two other initial positions, the curves are shifted upwards (start close to the holes)
or downwards (start far from the holes), 
but the behavior predicted by \cref{thm:lam0} is still recovered.
This is consistent with~\cite[Theorem 3.1]{AMMARI201266}, 
which states that the influence of the initial distribution on the mean exit time is an additive constant independent of $\varepsilon$.
Since $\lambda_\varepsilon^0$ is the reciprocal of the mean exit time, 
this additive constant only contributes to $\lambda_\varepsilon^0$ at a subleading order and vanishes in the limit~$\varepsilon \to 0$.
In the following, 
we always start the particles from the center of the domain.

\begin{figure}
    \centering
    \includegraphics[width=0.7\textwidth]{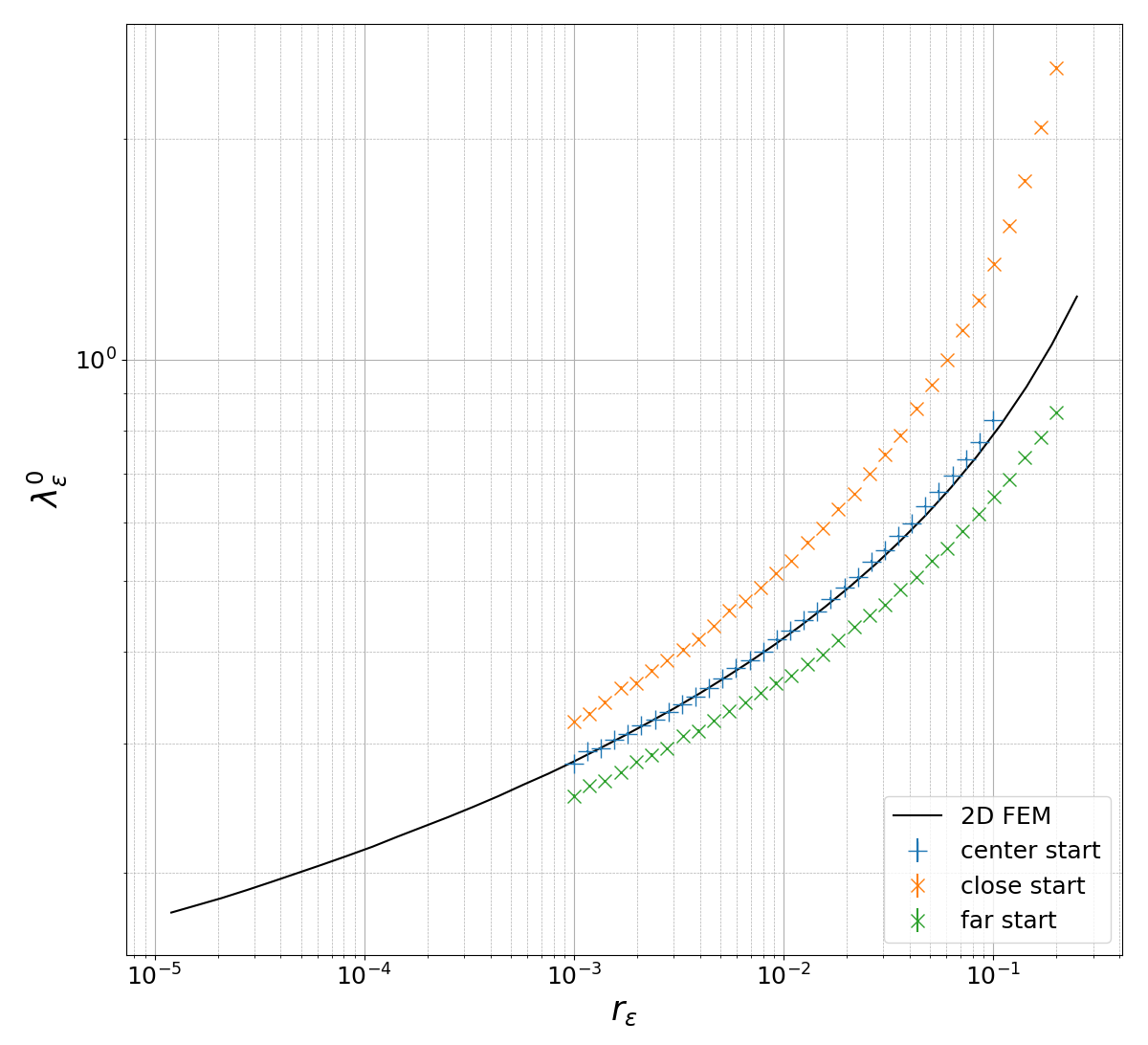}
    \caption{First eigenvalue $\lambda_\varepsilon^0$ as a function of $r_\varepsilon$ for the unit disk with two holes of identical radius $r_\varepsilon$. 
    The black line is the result of FEM simulations, which fits exactly the estimate given by \cref{thm:lam0}.
    The results obtained with \cref{alg:wos} are shown for three initial positions $x_0$: 
    the center of the domain, a position closer to the holes and a position farther from the holes.
    Starting from the center reproduces the FEM eigenvalue accurately.
    The two other initial conditions are shifted, but only through an additive constant on the mean exit time
    $1/\lambda_\varepsilon^0$, so that the relative deviation vanishes as $r_\varepsilon \to 0$.}
    \label{fig:initial distribution}
\end{figure}

\subsection{Mean exit time}
\label{sec:mean exit time}
We illustrate \cref{thm:lam0} in \cref{fig:mean exit time} in dimensions ranging from $d=2$ to $d=6$ using \cref{alg:wos}.
The chosen domain is the unit hyperball, 
with $N=d$ holes of equal radius $r_\varepsilon=\varepsilon$ placed in each of the cardinal directions $e_i = (0,\ldots,1,\ldots,0)^\top$.
The expected values for $C_d$ are respectively given by~$1, \,1.5, \,4, \,7.5, \,12$, from the smallest dimension $d=2$ to the largest dimension $d=6$.

\begin{figure}
    \centering
    \includegraphics[width=0.7\textwidth]{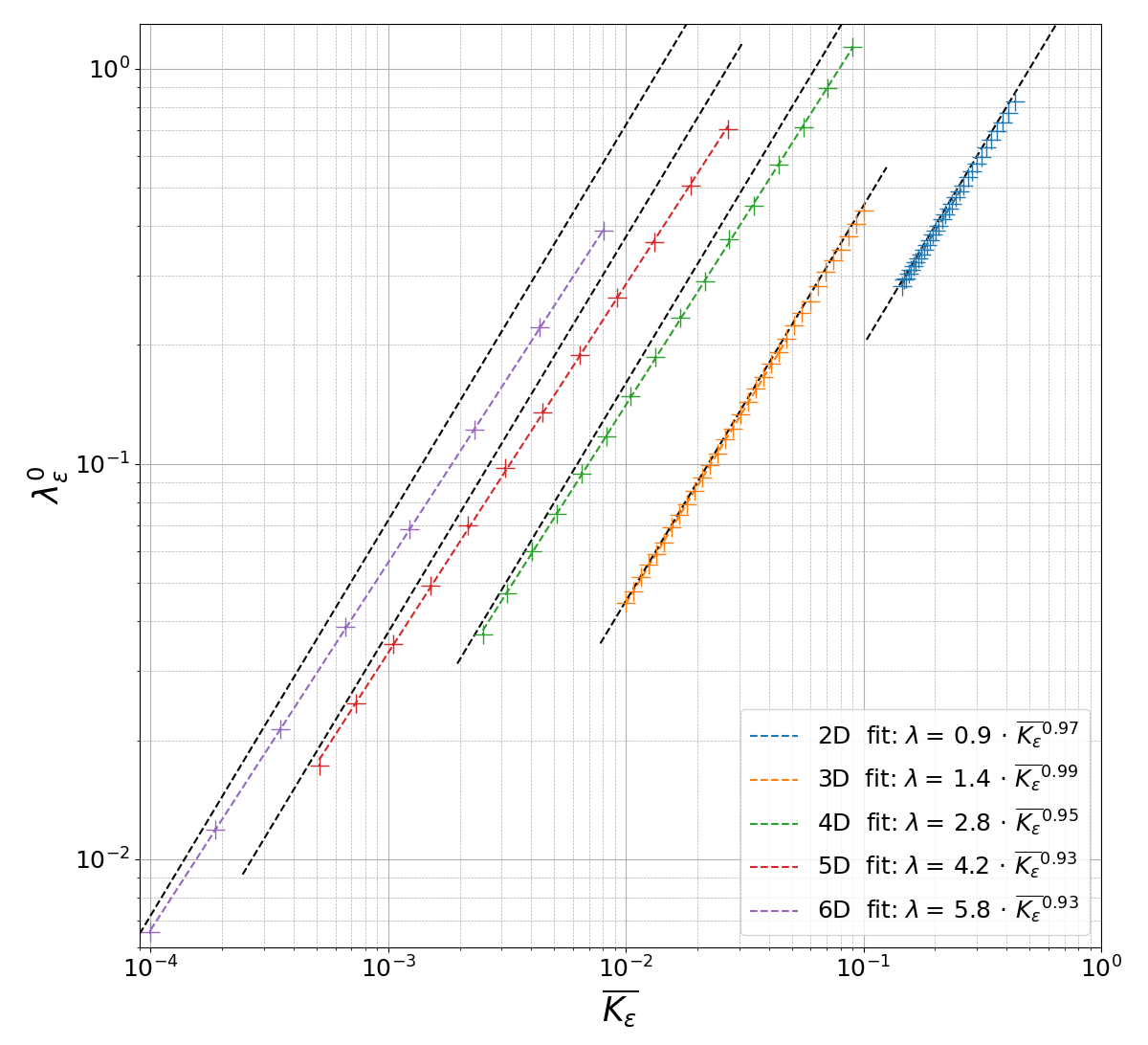}
    \caption{Estimation of the eigenvalue $\lambda_\varepsilon^0 = \left(\mathbb{E} [\tau_\varepsilon]\right)^{-1}$ as a function of $K_\varepsilon$ in dimensions $2$ to $6$.
        The black dashed lines are the theoretical estimates from \cref{thm:lam0}, 
    which predict $\lambda_\varepsilon^0$ to be asymptotically linear in $\Keo$
    with a known prefactor $C_d$.}
    \label{fig:mean exit time}
\end{figure}

In \cref{fig:mean exit time},
a fit $\lambda_\varepsilon^0 \sim C_d^\text{fit} \Keo^{\alpha^\text{fit}}$ is performed for each dimension.
In every dimension the fitted exponent~$\alpha^\text{fit}$ is very close to $1$,
which is in good agreement with the theoretical prediction of \cref{thm:lam0}.
For the constant $C_d^\text{fit}$, while the results for dimensions $2$ and $3$ are in good agreement with the theory, 
this is not the case in higher dimensions. 
This can be understood from the fact that the relative error term scales as $r_\varepsilon$ and not $\Keo$,
as explained in \cref{rema:mean exit time}, 
hence the asymptotic regime is not yet observed. 
For instance, the value $K_\varepsilon = 10^{-4}$ for the last point in dimension $6$ 
corresponds to only $r_\varepsilon = 10^{-1}$, 
which is presumably not small enough to observe the asymptotic regime. 

\subsection{Various domains in dimension 2}
\label{sec:various domains}
The results of \cref{thm:lam0} hold for any smooth domain.
We illustrate this in dimension $2$ for the domains shown in \cref{fig:different shapes draw}.
Notice that four of them are not smooth.
\begin{figure}
    \centering
    \includegraphics[width=0.7\textwidth]{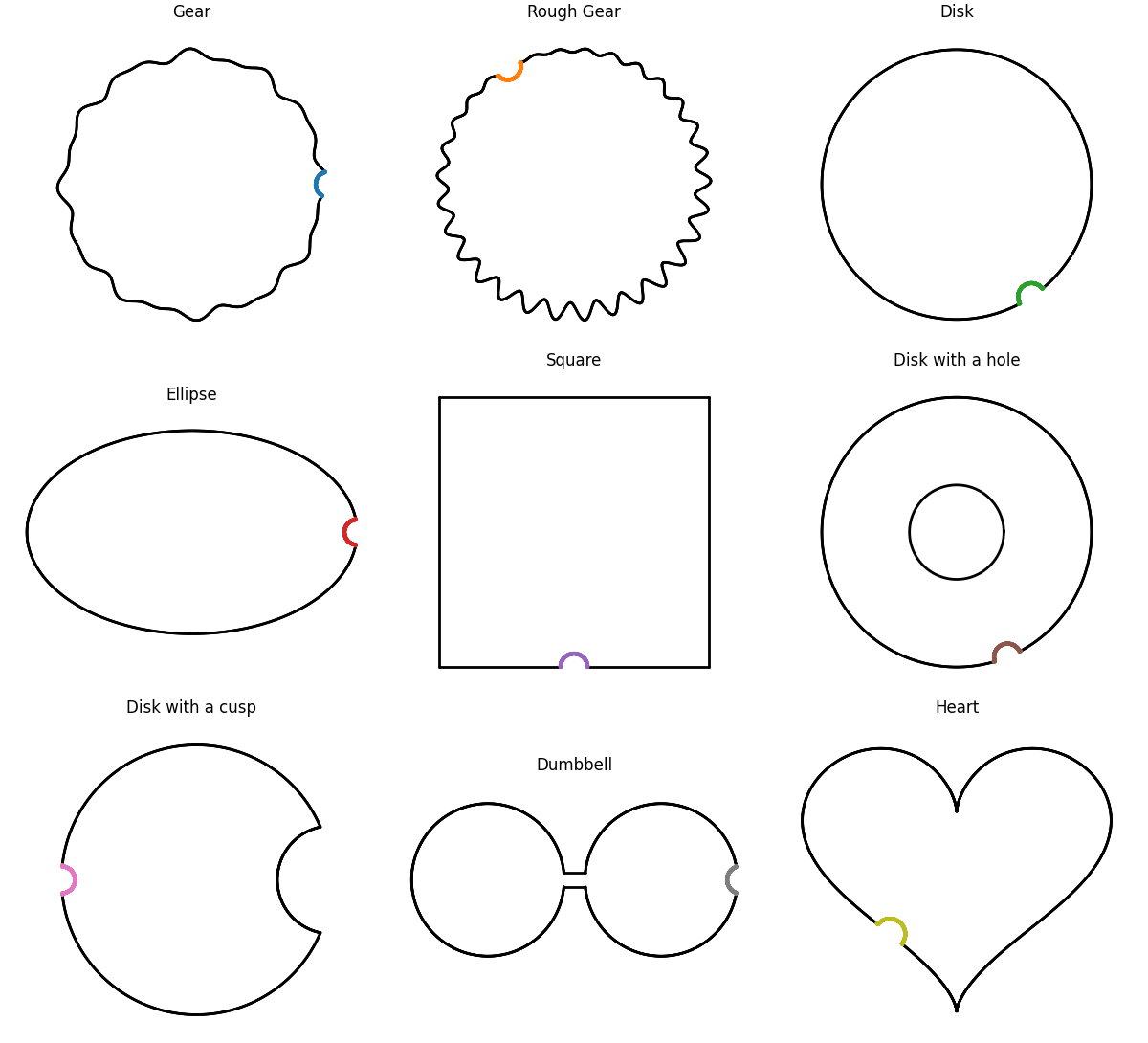}
    \caption{Various domains used to illustrate \cref{thm:lam0} in dimension $2$. 
    The red area represents the holes, of radius $0.1$ here.}
    \label{fig:different shapes draw}
\end{figure}

\begin{figure}
    \centering
    \includegraphics[width=0.9\textwidth]{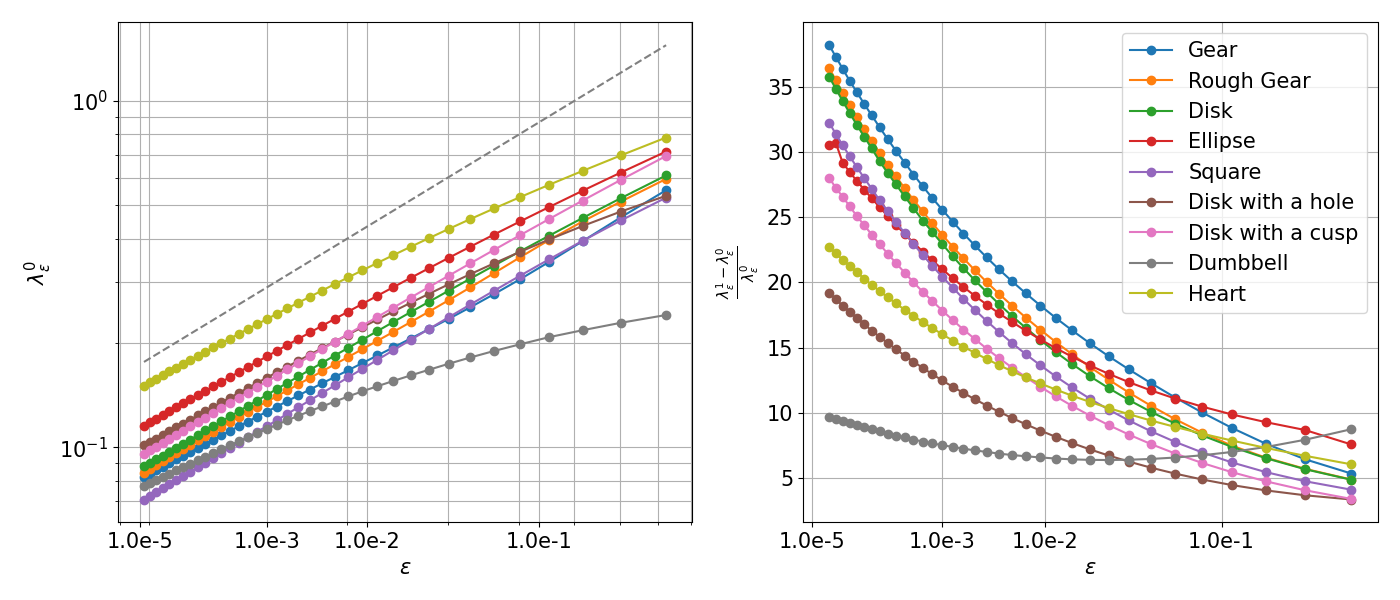}
    \caption{(Left) First eigenvalue $\lambda_\varepsilon^0$ in terms of $r_\varepsilon = \varepsilon$ for the various shapes. 
    The grey dashed line is the theoretical slope from \cref{thm:lam0}.
    (Right) First eigengap $\lambda_\varepsilon^1 - \lambda_\varepsilon^0$ in terms of $r_\varepsilon$.
    These results have been obtained with FEM in Gridap (see first paragraph of \cref{sec:numerical methods}).}
    \label{fig:different shapes}
\end{figure}
Visually, we see from \cref{fig:different shapes} that,
for all domains except the dumbbell,
the correct asymptotic scaling of the eigenvalue~$\lambda_{\varepsilon}^0$ is observed for small $r_\varepsilon^{(k)}$:
to see this,
compare the slope of the data with that of the grey dashed line in the left panel in~\cref{fig:different shapes}.
This does not hold for the dumbbell, likely because of the presence of a
narrow canal between the two disks, which induces metastability in the dynamics. 
The asymptotic regime is not observed but would likely be reached for smaller values of $r_\varepsilon$.
This feature is reflected in the fact that the ratio of the eigenvalues $\frac{\lambda_\varepsilon^1 - \lambda_\varepsilon^0}{\lambda_\varepsilon^0}$ is small compared to the values obtained for the other domains. 
We therefore make the experimental conjecture that, 
in order to observe the asymptotic scaling provided by our results, we need
\begin{equation*}
    \frac{\lambda_\varepsilon^1 - \lambda_\varepsilon^0}{\lambda_\varepsilon^0} \gg 1, \qquad
    r_\varepsilon \ll 1.
\end{equation*}
The second condition is natural given the scaling of the remainder terms in the theorem \cref{thm:lam0}. 
The first condition quantifies that the time scale to reach the QSD should be much smaller than the mean exit time.
Indeed, 
$\left(\lambda_\varepsilon^0\right)^{-1}$ is the mean exit time starting from the QSD, 
and $\left(\lambda_\varepsilon^1 - \lambda_\varepsilon^0\right)^{-1}$ is a rough estimate of the time scale required to reach the QSD,
starting from another distribution 
(see \eqref{eq:lambda0lambda1} in the proof of \cref{thm:QSD}). 
When these two time scales are of similar magnitude, this indicates the presence of internal metastabilities within the domain.

\subsection{The exit hole distribution}
\label{sec:exit hole dist}
In this subsection, we illustrate \cref{thm:exit hole distribution} and discuss the influence of the dimension on the results.
We consider the unit disk ($d=2$), ball ($d=3$), and hyperball ($d=4$) with two holes, one of radius $r_\varepsilon^{(1)} = 2 \varepsilon $ and the other one of radius $r_\varepsilon^{(2)} = \varepsilon$.
From~\cref{thm:exit hole distribution}
and \cref{rq:exit hole distribution}, 
we know that the probability to exit through the larger hole is $\frac{1}{2}$ at first order in dimension~$2$, 
$\frac{2}{3}$ in dimension $3$, and $\frac{4}{5}$ in dimension $4$.
In \cref{fig:small hole}, we report estimates obtained with \cref{alg:wos}. 
We see that the result is in good agreement with the theory in dimension $3$, 
and also in dimension~$4$ for a sufficiently small~$\varepsilon$. 
However, in dimension $2$, it is hard to see the asymptotic regime, 
as reaching $r_\varepsilon$ sufficiently small is computationally too expensive.
Indeed, the last point corresponds to~$r_\varepsilon = 10^{-5}$ ($K_\varepsilon \approx 0.1$), 
which is already of the same order as $\sqrt{h}$, and so the bias induced by the time step is not negligible.
Nevertheless, 
by fitting the estimate of the probability to leave through the larger hole with a function of the form $f(\varepsilon) = a + b \Keo$,
we find that $a \approx 0.5$, which is consistent with the theoretical estimates as $\varepsilon \to 0$.

\begin{figure}
    \centering
    \includegraphics[width=0.6\textwidth]{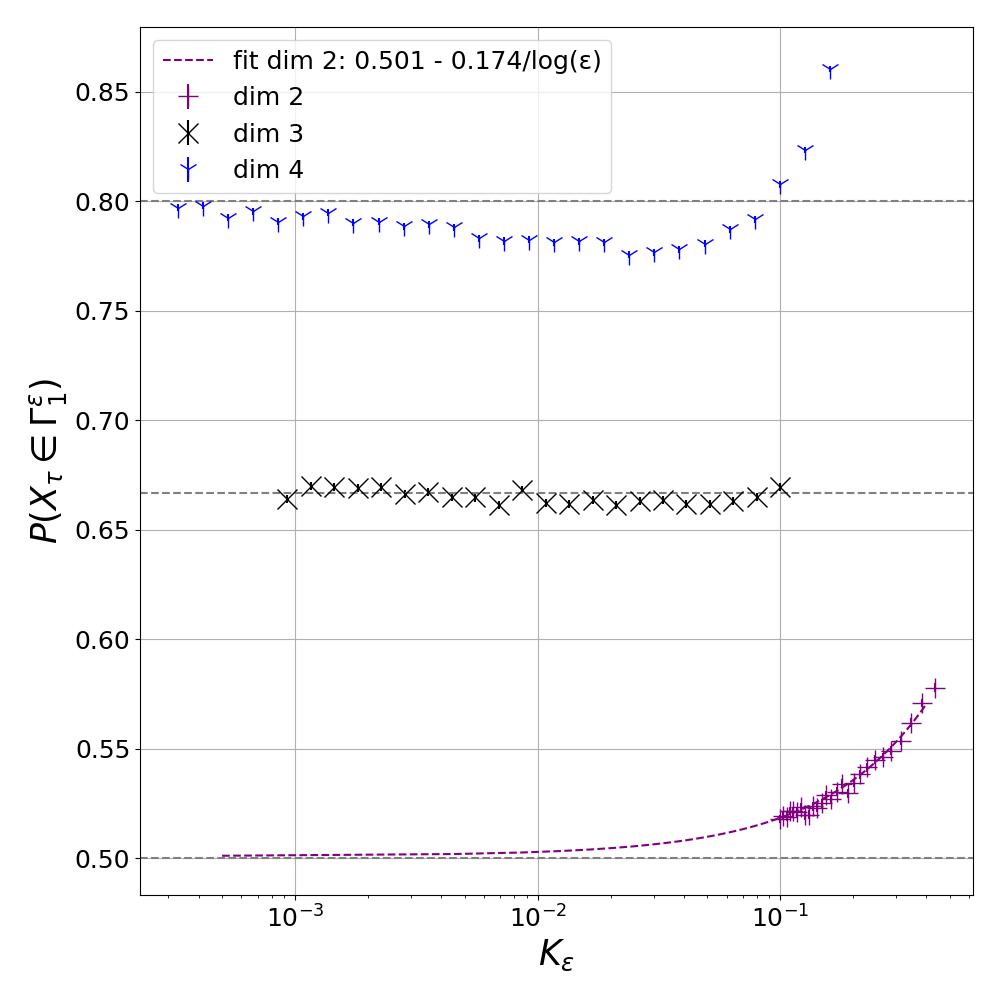}
    \caption{Probability to exit by the large hole in terms of $K_\varepsilon$. 
    The black dashed lines are the theoretical predictions of \cref{thm:exit hole distribution}.
    }
    \label{fig:small hole}
\end{figure}

\hop \hop 
\paragraph{Acknowledgements.}\\
We warmly thank Clément Guillot, Vincent Boulard, Lo\"is Delande and Boris Nectoux 
for the stimulating discussions. Part of this work is funded by the Agence
Nationale de la Recherche through the projects SINEQ (ANR-21-CE40-0006), IPSO (ANR-23-CE40-0027), and DySLoS (ANR-25-CE40-6875).
The authors have received funding from the European Research Council (ERC) 
under the European Union’s Horizon 2020 research and innovation programme (grant agreement No 810367), project EMC2.

\appendix

\section{A priori bounds on the smallest eigenvalues of the Laplacian with mixed boundary conditions}
\label{sec:app eigenvalues}
This section contains two lemmas presenting a priori bounds on the smallest eigenvalues of the Laplacian 
with mixed boundary conditions. 
These are both used in the main body of the paper, 
but also in~\cref{sec:app QSD}. 
It also includes a result in \cref{lem:sign eigenfunction} on the sign of their eigenfunctions. 
To this end, we denote 
\begin{equation}\label{eq:form domain}
    H^1_{0, \Gamma_\mathcal{D}^\varepsilon}(\Omega_\varepsilon) = \{ u \in H^1(\Omega_\varepsilon), \, u = 0 \text{ on } \Gamma_\mathcal{D}^\varepsilon\},
\end{equation}
where $u = 0$ on $\Gamma_\mathcal{D}^\varepsilon$ is understood in the sense of traces.
The associated quadratic form defined on~$H^1_{0, \Gamma_\mathcal{D}^\varepsilon}(\Omega_\varepsilon)$ is denoted by
\[
    q^\varepsilon(u) = \int_{\Omega_\varepsilon} \left|\nabla u \right|^2\, \mathrm{d} x.
\]
The operator $\mathcal{L}_\varepsilon$ introduced in \cref{sec:quasi_stationary} on the domain 
$\mathcal{D}(\mathcal{L}_\varepsilon)$ defined in \eqref{eq:domain L} is 
constructed rigorously as the Friedrichs extension of $q^\varepsilon$ 
(see \cite[Proposition 2.1]{Tony2024}).
\begin{lem}\label{lem:eigenvalues}
    Consider the operator $\mathcal{L}_\varepsilon$ and its smallest eigenvalue
    $\lambda_\varepsilon^0$.
    Then, there exists a constant~$C > 0$ independent of $\varepsilon$ such that,
    for~$\varepsilon$ sufficiently small,
    \[
        0 < \lambda_\varepsilon^0 < C \Keo,
    \]
    using the notation introduced in \eqref{eq:def Keo}.
\end{lem}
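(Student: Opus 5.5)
The plan is to use the min-max characterization of $\lambda_\varepsilon^0$ via the quadratic form $q^\varepsilon$ on $H^1_{0,\Gamma_\mathcal{D}^\varepsilon}(\Omega_\varepsilon)$, with explicit test functions localized near the holes.

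\emph{Positivity.} The bottom of the spectrum is an eigenvalue $\lambda_\varepsilon^0 = \inf q^\varepsilon(u)/\|u\|^2_{L^2(\Omega_\varepsilon)}$. The embedding $H^1(\Omega_\varepsilon)\hookrightarrow L^2(\Omega_\varepsilon)$ is compact because $\Omega_\varepsilon$ is a bounded Lipschitz domain for $\varepsilon$ small (the spheres meet $\partial\Omega$ transversally). Suppose $\lambda_\varepsilon^0=0$. Then the minimizer $u$ has $\nabla u=0$, so $u$ is constant on the connected set $\Omega_\varepsilon$. Since $u$ vanishes on $\Gamma_\mathcal{D}^\varepsilon$, which has positive surface measure, $u=0$, a contradiction. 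Hence $\lambda_\varepsilon^0>0$.

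\emph{Upper bound.} By min-max, it suffices to exhibit one $u\in H^1_{0,\Gamma_\mathcal{D}^\varepsilon}(\Omega_\varepsilon)$ with $\|u\|_{L^2}\ge c>0$ and $q^\varepsilon(u)\le C\Keo$. I would take a product of radial capacitary cutoffs,
\[
u=\prod_{k=1}^N \chi_k,\qquad \chi_k(x)=g_k\bigl(|x-x^{(k)}|\bigr).
\]
Fix $\rho>0$ smaller than half the minimal distance between the centers. Each profile $g_k$ is $0$ on $[0,r_\varepsilon^{(k)}]$ and $1$ on $[\rho,\infty)$, and in between it is the radial harmonic profile. For $d\ge3$,
\[
g_k(r)=\frac{(r_\varepsilon^{(k)})^{2-d}-r^{2-d}}{(r_\varepsilon^{(k)})^{2-d}-\rho^{2-d}}.
\]
For $d=2$,
\[
g_k(r)=\frac{\log(r/r_\varepsilon^{(k)})}{\log(\rho/r_\varepsilon^{(k)})}.
\]
This $u$ is Lipschitz, vanishes on each $\Gamma_k^\varepsilon$, and its supports of $\nabla\chi_k$ are disjoint, so $q^\varepsilon(u)=\sum_k\int_{\Omega_\varepsilon}|\nabla\chi_k|^2$. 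We bound each term by integrating over the full annulus $B(x^{(k)},\rho)\setminus B(x^{(k)},r_\varepsilon^{(k)})$ in $\real^d$, which is an upper bound since $\Omega_\varepsilon$ is contained in it locally. The radial computation gives the capacity. For $d\ge3$,
\[
\omega_d(d-2)\big/\bigl((r_\varepsilon^{(k)})^{2-d}-\rho^{2-d}\bigr)\le C\,(r_\varepsilon^{(k)})^{d-2}=C\Kek.
\]
For $d=2$ it is $2\pi/\log(\rho/r_\varepsilon^{(k)})\le C\Kek$ once $\varepsilon$ is small. Summing gives $q^\varepsilon(u)\le C\Keo$.

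For the denominator, $u=1$ on $\Omega\setminus\bigcup_k B(x^{(k)},\rho)$, so $\|u\|_{L^2}^2\ge|\Omega|-N\omega_d\rho^d/d>0$ after shrinking $\rho$. This yields $\lambda_\varepsilon^0\le C\Keo$, and increasing $C$ makes the inequality strict.

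\emph{Main difficulty.} The only delicate point is the positivity part's functional setting: existence of a minimizer requires compactness of the embedding and connectedness of $\Omega_\varepsilon$ for small $\varepsilon$. Both hold because the removed sets are small balls centered on the smooth boundary. I expect the upper bound itself to be routine.
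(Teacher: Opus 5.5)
Your proof is correct and follows essentially the paper's route. Positivity comes from the Poincaré-type argument for functions vanishing on $\Gamma_\mathcal{D}^\varepsilon$. The upper bound comes from the Rayleigh quotient of radial capacitary cutoffs around each hole: the paper's $1-\sum_k \eta_\varepsilon^{\alpha,k}(|x-x^{(k)}|)$ coincides with your product because the supports are disjoint, and you simply fix the harmonic profile (the optimal exponent $\alpha=d-2$ mentioned in the paper's subsequent remark) and bound the denominator below directly by $|\Omega|-N\omega_d\rho^d/d$ instead of using dominated convergence.
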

\begin{proof}
    From the min-max principle, 
    the smallest eigenvalue $\lambda_\varepsilon^0$ can be expressed as the infimum of the Rayleigh quotient
    \begin{equation}\label{eq:Rayleigh}
        \lambda_\varepsilon^0 = \inf_{u \in  H^1_{0, \Gamma_\mathcal{D}^\varepsilon}(\Omega_\varepsilon)  \setminus \{0\}} \frac{\displaystyle\int_{\Omega_\varepsilon} |\nabla u|^2}{\displaystyle\int_{\Omega_\varepsilon} |u|^2}.
    \end{equation}
    From Poincar\'e's inequality for functions in the form domain $H^1_{0, \Gamma_\mathcal{D}^\varepsilon}(\Omega_\varepsilon)$,
    one has $\lambda_\varepsilon^0 > 0$ for any $\varepsilon > 0$.
    To prove the upper bound on $\lambda_\varepsilon^0$, 
    let us exhibit a good test function in the form 
    domain~$H^1_{0, \Gamma_\mathcal{D}^\varepsilon}(\Omega_\varepsilon)$.
    As the holes are pairwise disjoint, there exists $R > 0$ such that $B(x^{(k_0)}, R) \cap B(x^{(k)}, R) = \emptyset$ for all $k \neq k_0$.
    Consider now, for $d \geq 3$, $\alpha > 0$ and any hole $k \in \{1, \ldots, N\}$,
    the function $\eta_\varepsilon^{\alpha, k} : \real_+ \to [0, \, 1]$ defined as follows:
    \begin{equation}
        \label{eq:def eta}
            \eta_\varepsilon^{\alpha, k}(r) = \left\{
                \begin{aligned}
                & 0 && \quad \text{ if } r \geq R,\\
                & 1 && \quad \text{ if } r \leq r_\varepsilon^{(k)},\\
                & \frac{r^{-\alpha} - R^{-\alpha}}{\left( r_\varepsilon^{(k)}\right)^{-\alpha} - R^{-\alpha}} && \quad \text{ if } r_\varepsilon^{(k)} < r < R.
        \end{aligned}
        \right.
    \end{equation}
    A test function in $H^1_{0, \Gamma_\mathcal{D}^\varepsilon}(\Omega_\varepsilon)$ 
    can be built from $\eta_\varepsilon^{\alpha, k}$ as follows:
    \begin{equation}
        \label{eq:quasi-mode form domain}
        \forall x \in \Omega_\varepsilon, \qquad \phi_\varepsilon(x) = 1 - \sum_{k=1}^N\eta_\varepsilon^{\alpha, k}(|x - x^{(k)}|).
    \end{equation}
    Let us now compute its Rayleigh quotient.
    For the $L^2$ norm, 
    notice first that, since $\Omega_\varepsilon \subset \Omega$,
    \[
        \int_{\Omega_\varepsilon} \left| \phi_\varepsilon \right|^2 \leq 
        \int_{\Omega} \left| \phi_\varepsilon \right|^2.
    \]
    It is clear that $\eta_\varepsilon^{\alpha, k} \to 0$ almost everywhere in $\real_+$ as $\varepsilon \to 0$.
    Furthermore, 
    $\eta_\varepsilon^{\alpha, k}$ is bounded by $1$ uniformly in $\varepsilon$.
    Thus, 
    by the dominated convergence theorem,
    \begin{equation}\label{eq:convergence eta}
        \| \eta_\varepsilon^{\alpha, k} \|_{L^2(\real_+)} \xrightarrow[\varepsilon \to 0]{} 0,
    \end{equation}
    hence $ \phi_\varepsilon \to 1$ in $L^2(\Omega)$ as $\varepsilon \to 0$. 

    Now to estimate the gradient of $\phi_\varepsilon$, 
    let us first compute, for any $\alpha > 0$,
    the derivative of $\eta_\varepsilon^{\alpha, k}$:
    \begin{equation}
        \forall r \in (r_\varepsilon^{(k)}, R) \qquad \left(\eta_\varepsilon^{\alpha, k}\right)'(r) = \frac{-\alpha r^{-\alpha-1}}{\left( r_\varepsilon^{(k)}\right)^{-\alpha} - R^{-\alpha}}.
    \end{equation}
    Notice that $\nabla \phi_\varepsilon$ is supported on 
    the union of the annuli~${A_k^\varepsilon = \{ x \in \Omega_\varepsilon \, |\, r_\varepsilon^{(k)} < |x - x^{(k)}| < R\}}$,
    and~$\| \nabla \phi_\varepsilon \|_{L^2(\Omega_\varepsilon)}$ can be computed as follows, for $\alpha$ such that $2 \alpha > d-2$:
    \begin{equation}\label{eq:gradient eta}
        \begin{aligned}
            \frac{1}{\omega_d}\| \nabla \phi_\varepsilon \|_{L^2(\Omega_\varepsilon)}^2
            \leq & \sum_{k=1}^N \int_{r_\varepsilon^{(k)}}^{R} \left| \left(\eta_\varepsilon^{\alpha, k}\right)'(r) \right|^2 r^{d-1} \, \mathrm{d} r\\
            = & \sum_{k=1}^N \frac{\alpha^2}{\left(\left( r_\varepsilon^{(k)}\right)^{-\alpha} - R^{-\alpha}\right)^2} \int_{r_\varepsilon^{(k)}}^{R} r^{-2\alpha - 2 + d - 1} \, \mathrm{d} r\\
            = & \sum_{k=1}^N \frac{\alpha^2}{\left(\left( r_\varepsilon^{(k)}\right)^{-\alpha} - R^{-\alpha}\right)^2} 
            \frac{R^{d -2 \alpha -2}-\left( r_\varepsilon^{(k)}\right)^{d -2 \alpha -2}}{d-2 \alpha -2} .
    \end{aligned}
    \end{equation}
    Thus at the leading order as $\varepsilon \to 0$,
    one has 
    \begin{equation}\label{eq:gradient eta 2}
        \| \nabla \phi_\varepsilon \|_{L^2(\Omega_\varepsilon)}^2
        \sim \omega_d \frac{\alpha^2}{2 \alpha - (d-2)} \, \sum_{k=1}^N\left( r_\varepsilon^{(k)}\right)^{d- 2}.
    \end{equation}
    To conclude, 
    gathering \eqref{eq:Rayleigh}, \eqref{eq:gradient eta} and \eqref{eq:gradient eta 2},
    one has that 
    \begin{equation} \label{eq:upper bound lambda equivalent}
        \lambda_\varepsilon^0 
        \leq \frac{\displaystyle\int_{\Omega_\varepsilon} |\nabla \phi_\varepsilon|^2\,\mathrm{d} x}{\displaystyle\int_{\Omega_\varepsilon} |\phi_\varepsilon|^2\,\mathrm{d} x} 
        = \frac{\displaystyle \frac{\alpha^2}{2 \alpha - (d-2)} \omega_d \sum_{k=1}^N \left( r_\varepsilon^{(k)}\right)^{d- 2} (1 + \mathrm{o}(1))}{|\Omega_\varepsilon| + \mathrm{o}(1)}
        \sim \frac{\alpha^2}{2 \alpha - (d-2)} \frac{\omega_d}{|\Omega|} \sum_{k=1}^N \left( r_\varepsilon^{(k)}\right)^{d- 2},
    \end{equation}
    with $\omega_d$ the surface area of the unit sphere in $\mathbb{R}^d$ introduced in \cref{thm:lam0}.
    This, in addition to \eqref{eq:def K}, concludes the proof in dimension $d \geq 3$.
    For the $2$ dimensional case, 
    a similar computation can be done with 
    \begin{equation*}
            \eta_\varepsilon^{k}(r) = \left\{
                \begin{aligned}
                & 0 && \quad \text{ if } r \geq R,\\
                & 1 && \quad \text{ if } r \leq r_\varepsilon^{(k)},\\
                & \frac{\log(r/R)}{\log(r_\varepsilon^{(k)}/R)} && \quad \text{ if } r_\varepsilon^{(k)} < r < R.
        \end{aligned}
        \right.
        \qedhere
    \end{equation*}
\end{proof}
\begin{rema}
It is natural to optimize the upper bound obtained in \eqref{eq:upper bound lambda equivalent} with respect to $\alpha$.
The optimal choice in dimension $d \geq 3$ is $\alpha = d-2$, which gives the following constant:
\[
    \lambda_\varepsilon^0 \leq (d-2) \frac{\omega_d}{|\Omega|} \sum_{k=1}^N \left( r_\varepsilon^{(k)}\right)^{d-2} (1 + o(1)).
\]
This upper bound is identical to the result obtained in \cref{thm:lam0} up to a factor $2$.
This factor can be removed by using a more refined computation of the Rayleigh quotient, 
using half of the annulus $A_k^\varepsilon$, 
in the same manner as in the proof of \cref{lem:compatibility constant}.
Yet notice that $\phi_\varepsilon$ is not a good candidate to be a quasi-mode as $\Delta \phi_\varepsilon \not \in L^2(\Omega_\varepsilon)$, 
and in fact is null except on $\Omega \cap \left( \bigcup_{k=1}^N \partial B(\xk, R) \right)$.
\end{rema}

The last two lemmas of this section state that the smallest eigenvalue $\lambda_\varepsilon^0$
is simple, with an associated eigenfunction of constant sign,
and that the second eigenvalue of $\mathcal{L}_\varepsilon$ is uniformly bounded from below
by a positive constant independent of $\varepsilon$.
The proof of the latter relies on the fact that the domain satisfies
the uniform interior cone condition,
hence the result is valid in the context of the main paper,
but also for other hole shapes, as in~\cref{sec:app QSD}.

\begin{lem}\label{lem:sign eigenfunction}
    Consider the operator $\mathcal{L}_\varepsilon$ introduced in \cref{sec:quasi_stationary}, and its smallest eigenvalue $\lambda_\varepsilon^0$.
    Then, $\lambda_\varepsilon^0$ is simple, and any eigenfunction $u_\varepsilon^0$ associated with $\lambda_\varepsilon^0$ has a constant sign in $\Omega_\varepsilon$.
\end{lem}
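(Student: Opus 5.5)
The proof follows the classical Courant argument for the ground state of a self-adjoint operator defined by a Dirichlet form: $|u|$ is again a minimizer, and a strong maximum principle or Harnack inequality forces minimizers to be strictly positive.

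First, $\mathcal{L}_\varepsilon$ has compact resolvent. The form domain $H^1_{0,\Gamma_\mathcal{D}^\varepsilon}(\Omega_\varepsilon)$ embeds compactly into $L^2(\Omega_\varepsilon)$ by Rellich's theorem, since $\Omega_\varepsilon$ is bounded and Lipschitz (it satisfies the interior cone condition). Hence $\lambda_\varepsilon^0$ is an isolated eigenvalue of finite multiplicity, and it is characterized by the Rayleigh quotient \eqref{eq:Rayleigh}.

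Now let $u$ be a real eigenfunction for $\lambda_\varepsilon^0$; this is no loss of generality, since real and imaginary parts are eigenfunctions too. Then $|u| \in H^1_{0,\Gamma_\mathcal{D}^\varepsilon}(\Omega_\varepsilon)$, with $\nabla |u| = \operatorname{sgn}(u)\nabla u$ almost everywhere by Stampacchia's lemma. Moreover $|u|$ still vanishes on $\Gamma_\mathcal{D}^\varepsilon$ in the trace sense, because the trace commutes with Lipschitz maps. Consequently $|u|$ has the same Rayleigh quotient as $u$, so it is also a minimizer. A minimizer of the Rayleigh quotient on a closed subspace is an eigenfunction, so $|u|$ is a nonnegative eigenfunction: it is a weak solution of $-\Delta |u| = \lambda_\varepsilon^0 |u|$ in $\Omega_\varepsilon$.

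By interior elliptic regularity $|u|$ is smooth in $\Omega_\varepsilon$ and superharmonic there, since $-\Delta|u| = \lambda_\varepsilon^0|u| \ge 0$. Because $\Omega_\varepsilon$ is connected, the strong minimum principle gives either $|u| \equiv 0$ or $|u| > 0$ everywhere in $\Omega_\varepsilon$. We therefore get $|u|>0$, so $u$ never vanishes in $\Omega_\varepsilon$. Being continuous on the connected set $\Omega_\varepsilon$, $u$ has constant sign.

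\textbf{Connectedness of $\Omega_\varepsilon$ is the main point to check.} If $\Omega$ is a connected domain and the holes are small balls centered on the boundary, removing them should not disconnect $\Omega$ for $\varepsilon$ small. I would justify this using the local graph description \eqref{eq:local description omega} near each $\xk$. In the flattened coordinates of \cref{lem:variable change}, the removed set is a small neighborhood of a boundary point. Its complement in a half-ball is connected, and it is path-connected to the rest of $\Omega$.

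Simplicity follows. Suppose $u_1,u_2$ were two linearly independent eigenfunctions. Take a combination $w = u_1 - c\,u_2$, with $c$ chosen so that $\int_{\Omega_\varepsilon} w = 0$. This is possible because $u_2$ has constant sign and so $\int u_2 \ne 0$. Then $w \neq 0$ is an eigenfunction, hence of constant sign and not identically zero, which contradicts $\int w = 0$. Thus the eigenspace is one-dimensional.
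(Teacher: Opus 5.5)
Your proof is correct and follows essentially the same route as the paper: $|u|$ is again a Rayleigh minimizer and hence an eigenfunction, it is strictly positive in the interior (you use the strong minimum principle for the superharmonic function $|u|$, the paper uses the interior Harnack inequality), and simplicity follows because constant-sign eigenfunctions cannot be orthogonal, of which your zero-mean combination $u_1 - c\,u_2$ is a variant. Your explicit check that $\Omega_\varepsilon$ is connected, which both arguments need, is a useful addition, since the paper leaves it implicit.
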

\begin{proof}
    This result is classical for elliptic operators, see for instance \cite[Theorem 8.38]{Gilbrag}.
    Let us recall the arguments. Consider an eigenfunction $u_\varepsilon^0 \in \mathcal{D}(\mathcal{L}_\varepsilon)$ 
    associated with $\lambda_\varepsilon^0$.
    Then, by the definition of the Rayleigh quotient, 
    \begin{equation}\label{eq:Raylight_equality}
        \lambda_\varepsilon^0 = \frac{\displaystyle\int_{\Omega_\varepsilon} |\nabla u_\varepsilon^0|^2}{\displaystyle\int_{\Omega_\varepsilon} |u_\varepsilon^0|^2}.
    \end{equation}
    However,
    notice that the function $|u_\varepsilon^0|$ also belongs to the form domain $H_{0, \Gamma_\mathcal{D}^\varepsilon}^1(\Omega_\varepsilon)$ and satisfies
    \[
        \frac{\displaystyle\int_{\Omega_\varepsilon} |\nabla |u_\varepsilon^0||^2}{\displaystyle\int_{\Omega_\varepsilon} |u_\varepsilon^0|^2} 
        = \frac{\displaystyle\int_{\Omega_\varepsilon} |\nabla u_\varepsilon^0|^2}{\displaystyle\int_{\Omega_\varepsilon} |u_\varepsilon^0|^2} = \lambda_\varepsilon^0.
    \]
    Hence it is also an eigenfunction associated with $\lambda_\varepsilon^0$.
    From the interior Harnack inequality applied to $|u_\varepsilon^0|$ (see \cite[Theorem 8.20]{Gilbrag}), 
    one has that $|u_\varepsilon^0|$ is positive in $\Omega_\varepsilon$. 
    This shows that $u_\varepsilon^0$ does not change sign in~$\Omega_\varepsilon$.
    Finally, 
    since eigenfunctions associated with $\lambda_\varepsilon^0$ do not change sign,
    there cannot exist two of them that are orthogonal in $L^2(\Omega_\varepsilon)$,
    hence $\lambda_\varepsilon^0$ is simple.
\end{proof}

\begin{lem}\label{lem:spectral gap}
    Consider the operator $\mathcal{L}_\varepsilon$ and its second eigenvalue $\lambda_\varepsilon^1$.
    Then there exists a constant $C > 0$ such that, for all $\varepsilon > 0$, 
    \[
        C < \lambda_\varepsilon^1.
    \]
\end{lem}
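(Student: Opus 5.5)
The plan is to use the min-max principle together with domain monotonicity in the Neumann direction. Adding Dirichlet conditions only raises eigenvalues. I would compare $\mathcal{L}_\varepsilon$ with the pure Neumann Laplacian on $\Omega_\varepsilon$, whose quadratic form is $q^\varepsilon$ on the larger space $H^1(\Omega_\varepsilon)$. Since $H^1_{0,\Gamma_\mathcal{D}^\varepsilon}(\Omega_\varepsilon) \subset H^1(\Omega_\varepsilon)$, the Courant--Fischer characterization
\[
    \lambda_\varepsilon^1 = \inf_{\substack{V \subset H^1_{0,\Gamma_\mathcal{D}^\varepsilon}(\Omega_\varepsilon)\\ \dim V = 2}} \; \sup_{u \in V\setminus\{0\}} \frac{q^\varepsilon(u)}{\|u\|_{L^2(\Omega_\varepsilon)}^2}
\]
gives $\lambda_\varepsilon^1 \geq \mu_\varepsilon^1$. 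Here $\mu_\varepsilon^1$ is the second Neumann eigenvalue of $-\Delta$ on $\Omega_\varepsilon$, and the first one is $\mu_\varepsilon^0 = 0$, with constant eigenfunctions. It therefore suffices to bound $\mu_\varepsilon^1$ from below uniformly in $\varepsilon$.

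To bound $\mu_\varepsilon^1$ from below, I would apply min-max again, this time with two-dimensional subspaces $V$. Any such $V$ contains a nonzero $u$ with $\int_{\Omega_\varepsilon} u = 0$. For this $u$, the uniform Poincar\'e--Wirtinger inequality already invoked in the proof of \cref{prop: phi scalar u}, which rests on \cite[Theorem 1.2]{Ruiz}, gives $\|u\|_{L^2(\Omega_\varepsilon)}^2 \leq C_{PW}\, q^\varepsilon(u)$ with $C_{PW}$ independent of $\varepsilon$. Hence the supremum over $V$ is at least $1/C_{PW}$, so $\mu_\varepsilon^1 \geq 1/C_{PW}$. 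Taking any $C < 1/C_{PW}$ then gives $C < \lambda_\varepsilon^1$ for all $\varepsilon$.

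The main obstacle is justifying that the Poincar\'e--Wirtinger constant is indeed uniform in $\varepsilon$. Two conditions must be checked.
\begin{itemize}
\item \textbf{Boundedness:} the domains $\Omega_\varepsilon$ must be uniformly bounded. This is immediate from $\Omega_\varepsilon \subset \Omega$.
\item \textbf{Uniform interior cone condition:} there must be a fixed angle and height such that every point of $\Omega_\varepsilon$ is the vertex of such a cone contained in $\Omega_\varepsilon$.
\end{itemize}

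To verify the cone condition, I would argue locally near each $x^{(k)}$ using the flattening map of \cref{lem:variable change}. Removing a small ball of radius $\re{(k)}$ centred on the smooth boundary leaves a region that, near $\Gamma_k^\varepsilon$, looks like the exterior of a half-ball. Cones pointing radially away from $x^{(k)}$, slightly tilted into $\Omega$, have uniform aperture and height. The sharpest corner is where $\partial B(x^{(k)},\re{(k)})$ meets $\partial\Omega$, and the angle there tends to $\pi/2$ as $\varepsilon \to 0$, so it stays uniformly away from $0$. Away from the holes, the cone condition is inherited from the smooth domain $\Omega$.

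If citing \cite{Ruiz} in this form is not fully convincing, I would fall back on a different argument. One can construct uniformly bounded extension operators from $H^1(\Omega_\varepsilon)$ to $H^1(\Omega)$ by reflecting across each sphere $\partial B(x^{(k)},\re{(k)})$, rescaled to unit size. The Poincar\'e--Wirtinger inequality on the fixed domain $\Omega$ then transfers to $\Omega_\varepsilon$, at the cost of a correction to the mean. That correction is controlled because $|\Omega\setminus\Omega_\varepsilon| \to 0$.
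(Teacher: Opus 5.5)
Your proposal is correct and follows the paper's proof essentially verbatim. The paper also compares $\lambda_\varepsilon^1$ with the second Neumann eigenvalue of $\Omega_\varepsilon$ through the inclusion $H^1_{0,\Gamma_\mathcal{D}^\varepsilon}(\Omega_\varepsilon) \subset H^1(\Omega_\varepsilon)$, and then invokes the uniform Poincar\'e--Wirtinger inequality of \cite[Theorem 1.2]{Ruiz} under the uniform interior cone condition; your explicit min-max step (every two-dimensional subspace contains a nonzero mean-zero function) and your sketch of the cone condition near the holes fill in details the paper leaves implicit.
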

\begin{proof}
    We follow a similar strategy as in the proof of Proposition 2.3 and Lemma 3.1 of \cite{Tony2024}. 
    From the inclusion 
    of the domain $H^1_{0, \Gamma_\mathcal{D}^\varepsilon}(\Omega_\varepsilon)$ 
    of the form $q^\varepsilon$ into the form domain of the Laplacian with full Neumann boundary conditions, 
    namely $H^1(\Omega_\varepsilon)$,
    one has that 
    \begin{equation}\label{eq:spectral gap 1}
        \lambda_\varepsilon^1 \geq \lambda_1^\mathrm{Neumann}(\Omega_\varepsilon).
    \end{equation}
    To continue, 
    notice that $\Omega_\varepsilon$ defined as \eqref{eq:omega varepsilon} satisfies the uniform interior cone condition,
    thus, the Poincaré-Wirtinger inequality holds in $\Omega_\varepsilon$ with a constant independent of $\varepsilon$ (see \cite[Theorem 1.2]{Ruiz}).
    This directly shows that there exists $C>0$ independent of $\varepsilon$ such that 
    \begin{equation}\label{eq:spectral gap 2}
        \lambda_1^\mathrm{Neumann}(\Omega_\varepsilon) \geq C > 0.
    \end{equation}
    Combining \eqref{eq:spectral gap 1} and \eqref{eq:spectral gap 2} yields the claimed result.
\end{proof}

\section{\texorpdfstring{Proofs of \cref{thm:QSD} and \cref{cor:QSD}}{Proofs of the quasi-stationary results}}
\label{sec:app QSD}
This section aims at proving \cref{thm:QSD} and \cref{cor:QSD} for $\varepsilon > 0$ fixed.
Hence, we drop the dependence on $\varepsilon$ in the notation: in this section
 we simply denote $\Omega = \Omega_\varepsilon$,
$\Gamma_\mathcal{D} = \Gamma_\mathcal{D}^\varepsilon$ and
$\Gamma_\mathcal{N} = \Gamma_\mathcal{N}^\varepsilon$.
The results and the proofs of this section do not require any particular
geometry of the holes as in~\eqref{eq:omega varepsilon}.
The main assumption is that $\partial \Omega$ is at least Lipschitz and
the only non-smooth part of
$\partial \Omega$ is at $\overline{\Gamma_\mathcal{D}} \cap \overline{\Gamma_\mathcal{N}}$.
More precisely, both $\Gamma_\mathcal{D}$ and $\Gamma_\mathcal{N}$ are smooth
and at each point of their boundary, they meet at an angle smaller than or equal to $\pi$
(see \cite[Definition 31]{TonyBoris2022} for a precise definition of the angle between two manifolds).
For instance the results of this section are valid in the context of the holes considered in~\cite{narrow2d},
where the holes are not spherical.
In the proofs below, we distinguish between two cases:
either the whole boundary~$\partial \Omega$ is smooth,
or the holes meet the outer boundary of the domain with an angle smaller than~$\pi$
(these are so-called \textit{creased} domains, see \cite[Section 2]{MitreaMitrea}).

In order to prove the existence and the uniqueness of the quasi-stationary distribution,
we first prove a Feynman--Kac formula for the following process:
\begin{equation}\label{eq:def process reflexion}
    \d X_t = \sqrt{2} \d W_t - \mathbbm{1}_{X_t \in \partial \Omega} n(X_t) \d L_t,
\end{equation}
where $W_t$ is the standard Brownian motion and $L$ is the local time on the boundary $\partial \Omega$.
We denote by~$\tau_A$ the following first entrance time in the set $A \subset \overline{\Omega}$:
\begin{equation} \label{eq:def stopping time}
    \tau_A = \inf \{ t \geq 0\, |\, X_t \in A\}.
\end{equation}
Let us first show that the law of the killed process
has a bounded density.
\begin{lem}\label{lem:proba}
    Let $\mu$ be a probability measure on $\Omega$
    and, for $t > 0$,
    define the measure $Q_t^\mu$ on $\overline{\Omega}$ by
    \[
        Q^\mu_t(A) = \mathbb{P}_\mu(X_t \in A, \, t < \tau_{\overline{\Gamma_\mathcal{D}}}),
        \qquad A \subset \overline{\Omega} \text{ measurable}.
    \]
    Then $Q_t^\mu$ is absolutely continuous with respect to the Lebesgue measure on $\overline{\Omega}$
    and its density $q_t^\mu$ belongs to $L^2(\Omega)$ for any $t > 0$.
\end{lem}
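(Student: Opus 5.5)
The plan is to dominate the killed law by the law of the reflected Brownian motion without killing, and then use Gaussian-type heat kernel bounds for the Neumann Laplacian. Since $\{t < \tau_{\overline{\Gamma_\mathcal{D}}}\} \subset \Omega$ (the full probability space), we have $Q_t^\mu(A) \leq \mathbb{P}_\mu(X_t \in A)$ for every measurable $A$. Here $X$ denotes the reflected process in $\Omega$ on the whole boundary $\partial\Omega$, including the would-be Dirichlet part. Hence it suffices to show that $\mathbb{P}_\mu(X_t \in \cdot)$ is absolutely continuous with a density in $L^2(\Omega)$. Absolute continuity of $Q_t^\mu$ then follows immediately, and its density satisfies $0 \leq q_t^\mu \leq p_t^\mu$ almost everywhere, where $p_t^\mu$ is the density of the unkilled process. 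So the $L^2$ bound transfers.

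For the unkilled reflected Brownian motion, the transition law has a density $p_t(x,y)$, the Neumann heat kernel of $\Omega$. I would invoke the standard fact (Fukushima's theory of Dirichlet forms and reflected Brownian motion on Lipschitz domains, e.g. Bass--Hsu) that the process defined by \eqref{eq:def process reflexion} is the Hunt process associated with the Neumann Dirichlet form $\int_\Omega |\nabla u|^2$ on $H^1(\Omega)$. That form satisfies a Sobolev/Nash inequality on the bounded Lipschitz domain $\Omega$, because $H^1(\Omega)$ extends to $H^1(\mathbb{R}^d)$. Nash's argument then gives the ultracontractivity bound
\[
\sup_{x,y \in \Omega} p_t(x,y) \leq C\bigl(1 + t^{-d/2}\bigr), \qquad t > 0.
\]
In our setting $\partial\Omega$ is Lipschitz: it is smooth except at creases, where the angle is at most $\pi$. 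So this applies with a constant possibly depending on $\varepsilon$, which is harmless here since $\varepsilon$ is fixed.

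With this bound, $p_t^\mu(y) = \int_\Omega p_t(x,y)\,\mu(\d x) \leq C(1+t^{-d/2})$ for every $y$, because $\mu$ is a probability measure. Hence $p_t^\mu \in L^\infty(\Omega) \subset L^2(\Omega)$, as $\Omega$ is bounded. Therefore $q_t^\mu \in L^\infty(\Omega) \subset L^2(\Omega)$, with $\|q_t^\mu\|_{L^2} \leq C(1+t^{-d/2})|\Omega|^{1/2}$.

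The main obstacle is the identification step: we must justify that the SDE solution with local time has the Neumann heat kernel as transition density. This is where the Lipschitz and crease assumption on $\partial\Omega$ enters, and we need uniqueness in law of reflected Brownian motion on such domains. I would cite the existence and uniqueness results for reflected Brownian motion on Lipschitz domains (Bass--Hsu, and the references \cite{Tanaka, RelefctedBrownian} for the smooth pieces). Should one prefer to avoid Dirichlet form theory, a fallback works for $d \geq 1$: compare with the free Gaussian kernel, using that the Neumann kernel on a uniform domain is bounded by $Ct^{-d/2}e^{-c|x-y|^2/t}$ for $t \leq 1$, and use the semigroup property for $t > 1$.
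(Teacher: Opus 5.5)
Your proposal is correct and essentially follows the paper's proof. You dominate $Q_t^\mu$ by the law of the unkilled reflected process, so that $q_t^\mu(y)\le\int p_t(x,y)\,\mu(\mathrm{d}x)\le\sup p_t$, and conclude from a uniform bound on the Neumann heat kernel; your fallback via Gaussian upper bounds is exactly the paper's route, which cites Bass--Hsu for $d\ge 3$ and Ramasubramanian for $d=2$. Your primary Nash/ultracontractivity argument treats $d=2$ uniformly, but on its own it gives the kernel bound only for almost every $(x,y)$, so for arbitrary $\mu$ (e.g.\ a Dirac mass) you still need the continuity of the heat kernel from Bass--Hsu, which your citation for the identification step does supply.
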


\begin{proof}
    For any measurable set $A \subset \overline{\Omega}$ and any $x \in \overline{\Omega}$,
    it holds that 
    \[
        \mathbb{P}_x(X_t \in A, \, t < \tau_{\overline{\Gamma_\mathcal{D}}})
        \leq \mathbb{P}_x(X_t \in A)
        = \int_A p_t(x, y) \, \d y,
    \]
    where $p_t(x, y)$ is the transition density of the process~\eqref{eq:def process reflexion}, 
    (see \cite[Lemma 4.3, Theorem 4.4 ]{BassHsu} for its existence and regularity).
    Integrating this inequality with respect to $\mu(\mathrm{d}x)$ leads to
    \[
        Q_t^\mu(A) \leq \int_A \left( \int_{\overline{\Omega}} p_t(x, y) \, \mu(\mathrm{d}x) \right) \d y,
    \]
    so that $Q_t^\mu(A) = 0$ whenever $A$ is Lebesgue negligible.
    Hence $Q_t^\mu$ admits a density $q_t^\mu$ with respect to the Lebesgue measure,
    and this density satisfies, for almost every $y \in \Omega$,
    \begin{equation}\label{eq:bound density}
        q_t^\mu(y) \leq \int_{\overline{\Omega}} p_t(x, y) \, \mu(\mathrm{d}x)
        \leq \sup_{x', y' \in \overline{\Omega}} p_t(x', y'),
    \end{equation}
    where we used that $\mu$ is a probability measure.
    The right-hand side of \eqref{eq:bound density} is bounded by $C t^{-\frac{d}{2}}$ for~$t \in (0, T]$,
    as a consequence of the usual Gaussian upper bound on the transition density of the process~\eqref{eq:def process reflexion}:
    \[
        p_t(x,y) \leq C\, t^{-d/2} \exp\!\left( - \frac{|x-y|^2}{Ct} \right),
        \qquad x,y \in \overline{\Omega}, \, t \in (0, T],
    \]
    see \cite[Theorem 3.1]{BassHsu} for $d \geq 3$,
    the two dimensional case following from \cite[Remark 2.4]{Ramasubramanian}.
    Inserting this bound into \eqref{eq:bound density} shows that $q_t^\mu \in L^2(\Omega)$ for any $t > 0$.
\end{proof}

Consider next the negative Laplacian operator, denoted by~$\mathcal{L}$,
with domain $\mathcal{D}(\mathcal{L})$ introduced in \eqref{eq:domain L}.
Since $\Omega$ is bounded with a Lipschitz boundary,
the operator $\mathcal{L}$ is self-adjoint with compact resolvent,
so there exists an orthonormal basis $(u_k)_{k \geq 0}$ in $L^2(\Omega)$ consisting of eigenvectors of $\mathcal{L}$,
associated to an increasing sequence of eigenvalues $(\lambda_k)_{k \geq 0}$.
For $k \geq 0$,
the eigencouple $(\lambda_k, u_k)$ satisfies
\begin{equation}
    \label{eq:elliptic eigen}
    \left\{
        \begin{aligned}
    - \Delta u_k & = \lambda_k u_k && \text{ in } \Omega, \\
    \partial_n u_k & = 0 && \text{ on } \Gamma_\mathcal{N}, \\
    u_k & = 0 && \text{ on } \Gamma_\mathcal{D}.
        \end{aligned}
    \right.
\end{equation}

First, we state a technical result on the regularity of the eigenfunctions $(u_k)_{k \geq 0}$.
\begin{lem}\label{lem:continuity}
    For any $k \geq 0$, the eigenfunction $u_k$ is continuous on $\overline{\Omega}$, 
    and smooth on $\overline{\Omega} \setminus \left( \overline{\Gamma_\mathcal{D}} \cap \overline{\Gamma_\mathcal{N}} \right)$.
    Furthermore, 
    there exist $C > 0$ and $m>0$, independent of $k$, such that
    \begin{equation}\label{eq:c0 bound}
        \lVert u_k \rVert_{C^0(\overline{\Omega})}
        \leq C \lambda_k \left( 1 + \lambda_k \right)^{m} \lVert u_k \rVert _{L^{2}(\Omega)}.
    \end{equation}
\end{lem}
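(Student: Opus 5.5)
We will prove \cref{lem:continuity} in three steps: interior and boundary regularity away from the interface, continuity up to the interface, and a quantitative $C^0$ bound obtained by iterating an $L^p$-improvement estimate. Throughout, $\varepsilon$ is fixed and $\Omega=\Omega_\varepsilon$.

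\textbf{Step 1 (regularity away from the interface).} Interior smoothness of $u_k$ follows from elliptic regularity, since $-\Delta u_k=\lambda_k u_k$; a standard bootstrap in $H^s_{\mathrm{loc}}$ gives $u_k\in C^\infty(\Omega)$. Near a point of $\Gamma_\mathcal{D}$ or of $\Gamma_\mathcal{N}$ not lying on $\overline{\Gamma_\mathcal{D}}\cap\overline{\Gamma_\mathcal{N}}$, the boundary is smooth and carries a single homogeneous boundary condition. Local boundary regularity for the Dirichlet or Neumann problem then gives smoothness up to the boundary there. For the Neumann part, one first checks that the weak condition \eqref{eq:meaning dn nu zero} coincides with the variational one coming from the Friedrichs extension. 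This yields smoothness on $\overline{\Omega}\setminus(\overline{\Gamma_\mathcal{D}}\cap\overline{\Gamma_\mathcal{N}})$.

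\textbf{Step 2 (continuity at the interface and $L^\infty$ bound).} Near the interface, $u_k$ is a weak $H^1$ solution of a mixed problem on a Lipschitz, creased domain. We would invoke De Giorgi--Nash--Moser theory for mixed boundary problems: after locally reflecting across the Neumann part, or directly through Stampacchia truncation with test functions $(u-\kappa)_+$, which stay in $H^1_{0,\Gamma_\mathcal{D}}$ for $\kappa\ge0$. This gives local boundedness and Hölder continuity up to $\overline{\Gamma_\mathcal{D}}\cap\overline{\Gamma_\mathcal{N}}$. The angle condition (at most $\pi$) is what guarantees that the uniform interior cone / Lipschitz structure needed by these estimates holds. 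Combined with Step~1, this proves continuity on $\overline{\Omega}$.

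\textbf{Step 3 (the quantitative bound \eqref{eq:c0 bound}).} We plan to use the semigroup. Since $u_k$ is an eigenfunction, $u_k=e^{\lambda_k t}e^{-t\mathcal{L}}u_k$ for every $t>0$. The killed semigroup is dominated by the reflected one, so its kernel is bounded by $p_t(x,y)$. The Gaussian bound of \cite{BassHsu} used in \cref{lem:proba} gives
\[
\|e^{-t\mathcal{L}}\|_{L^2\to L^\infty}\le C t^{-d/4}, \qquad t\in(0,1].
\]
Choosing $t=\min\{1,\lambda_k^{-1}\}$ yields
\[
\|u_k\|_{C^0}\le C e\,(1+\lambda_k)^{d/4}\|u_k\|_{L^2}.
\]
The stated bound carries an extra factor $\lambda_k$, which is harmless for $k\ge0$ once $\lambda_0>0$ (by \cref{lem:eigenvalues}): for $\lambda_k\ge\lambda_0$ one has $1\le \lambda_k/\lambda_0$, and the constant $1/\lambda_0$ is absorbed into $C$ since $\varepsilon$ is fixed. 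Alternatively, Moser iteration with explicit tracking of $\lambda_k$ gives the polynomial dependence directly.

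The main obstacle is Step~2: justifying Hölder continuity up to the Dirichlet--Neumann junction. Here we would rely on the creased-domain results of \cite{MitreaMitrea} or on the general mixed-problem De Giorgi theory, and check carefully that our angle hypothesis fits their assumptions.
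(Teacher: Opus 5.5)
Your proof is correct in substance but takes a different route from the paper. The paper gets continuity and \eqref{eq:c0 bound} from a single chain. First, finitely many Moser steps (testing with $|u_{k,M}|^{p-2}u_{k,M}$ and using $H^1\hookrightarrow L^{2d/(d-2)}$) raise integrability to some $L^q$ with $q>2d$, with polynomial dependence on $\lambda_k$. Then it uses the fractional regularity of the Zaremba problem: $L^q\to W^{1/2,q}$ from Shamir in the smooth case, $W^{\delta,p}$ from Mitrea--Mitrea in the creased case, made quantitative via the bounded inverse theorem. Applying this with $f=\lambda_k u_k$, together with the embedding $W^{1/2,q}\hookrightarrow C^0(\overline{\Omega})$, gives both continuity and the bound; the prefactor $\lambda_k$ is produced by this last elliptic step.

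You decouple the two parts instead. Hölder continuity at the junction comes from De Giorgi--Stampacchia theory for mixed problems, and the bound comes from ultracontractivity. This gives the explicit exponent $m=d/4$ and only costs the factor $\lambda_k/\lambda_0\geq 1$. Incidentally, for $d\geq 3$, running the paper's Moser iteration to the end, without the crude estimate $(\cdot)^{1/p_j}\leq(\cdot)$, gives the same exponent $\sum_j 1/p_j=d/4$. Your route avoids the $W^{s,p}$ Zaremba estimates in the quantitative part, but relies on mixed De Giorgi theory for continuity. There the hypothesis to verify is regularity of the Dirichlet--Neumann interface (e.g.\ Gr\"oger regularity), not an interior cone condition. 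It holds here because the two smooth boundary pieces meet along a smooth $(d-2)$-dimensional manifold at an angle at most $\pi$.

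One justification in your Step 3 must be made non-circular. You cannot identify $e^{-t\mathcal{L}}$ with the semigroup of the killed process through \cref{thm:feynman-kac}, since that proof uses the boundedness and continuity established in this very lemma. Argue analytically instead. The space $H^1_{0,\Gamma_\mathcal{D}}(\Omega)$ is an ideal of $H^1(\Omega)$: if $0\leq v\leq u$ with $u\in H^1_{0,\Gamma_\mathcal{D}}(\Omega)$ and $v\in H^1(\Omega)$, then the trace of $v$ vanishes on $\Gamma_\mathcal{D}$, because traces are order preserving. The mixed and Neumann forms also coincide on this ideal. Ouhabaz's domination criterion therefore gives $|e^{-t\mathcal{L}}f|\leq e^{t\Delta_N}|f|$, where $\Delta_N$ is the Neumann Laplacian. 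Ultracontractivity of $e^{t\Delta_N}$ on a bounded Lipschitz domain (from the Sobolev or Nash inequality) then yields $\|e^{-t\mathcal{L}}\|_{L^2\to L^\infty}\leq C t^{-d/4}$ for $t\in(0,1]$, with no probabilistic input needed.
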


\begin{proof}
    Notice that the domain $\overline{\Omega} \setminus \left( \overline{\Gamma_\mathcal{D}} \cap \overline{\Gamma_\mathcal{N}} \right)$
    does not contain the intersection of the Dirichlet and Neumann parts of the boundary.
    Hence, 
    locally the problem \eqref{eq:elliptic eigen} is either a Dirichlet or a Neumann problem on a smooth domain.
    The smoothness of the eigenfunctions $u_k$ on $\overline{\Omega} \setminus \left( \overline{\Gamma_\mathcal{D}} \cap \overline{\Gamma_\mathcal{N}} \right)$
    follows then from standard elliptic regularity results (see \cite[Theorem 9.25, Theorem 9.26]{bezis}).

    For the continuity of $u_k$ on $\overline{\Omega}$, we split the proof into two parts.

    \textbf{Step 1: Elliptic regularity.}
    Let us start by recalling some results on elliptic regularity.
    Consider the Poisson equation with mixed boundary conditions
    \begin{equation}\label{eq:elliptic f}
        \left\{
            \begin{aligned}
        - \Delta u & = f \text{ in } \Omega, \\
        \partial_n u & = 0 \text{ on } \Gamma_\mathcal{N}, \\
        u & = 0\text{ on } \Gamma_\mathcal{D},
         \end{aligned}
        \right.
    \end{equation}
    where $f \in L^2(\Omega)$.
    This problem is known as the Zaremba problem
    and the existence of a unique solution in the weak $H^1$ sense is well established (see \cite{Savare}).
    Let $u$ denote the solution.
    Regarding the regularity of $u$,
    from~\cite[Theorem 1]{Savare},
    we know that
    \begin{equation}\label{eq:regularity mitrea}
        f \in L^2(\Omega) \Rightarrow u \in H^{\frac{3}{2} - \eta}(\Omega) \qquad \text{ for } \eta > 0.
    \end{equation}
    For $L^p$ regularity,
    we make the distinction between smooth and creased domains.
    For smooth domains~\cite[Equation (0.3)]{Savare} and \cite[Theorem 5.3]{shamir},
    one obtains, for any $p > 4$ and $s < \frac{1}{2} + \frac{2}{p}$,
    \begin{equation}
        \label{eq:regularity sharmir}
        f \in L^p(\Omega) \Rightarrow u \in W^{s, p}(\Omega).
    \end{equation}
    For creased domains
    (domains such that all the angles between $\Gamma_{\mathcal{D}}$ and $\Gamma_{\mathcal{N}}$ are smaller than $\pi$),
    \cite[Theorem of Section 1]{MitreaMitrea}
    states that, for $d \geq 3$, there exists $\delta \in (0, 1)$
    such that, for any $p \in (2, \infty)$,
    one has
    \begin{equation}
        \label{eq:regularity Mitrea2}
        f \in L^p(\Omega) \Rightarrow u \in W^{\delta, p}(\Omega).
    \end{equation}
    However,
    in these references,
    the authors do not always provide an inequality relating the norm of the source $f$
    to the Sobolev norm of the solution $u$.
    The first part of the proof is to prove these inequalities. \\
    Consider the Banach space $E$ defined, for $p > 4$ and $0 < s < \frac{1}{2} + \frac{2}{p}$, as
    \[
        E = \Bigl\{ u \in W^{s, p}(\Omega) \cap H^1(\Omega), \, \Delta u \in L^p(\Omega), \,
        u = 0 \text{ on } \Gamma_\mathcal{D}, \, \partial_n u = 0 \text{ on } \Gamma_\mathcal{N}\Bigr\},
    \]
    endowed with the norm
    \[
        \|u\|_E = \|u\|_{W^{s, p}(\Omega)} + \|u\|_{H^1(\Omega)} + \|\Delta u\|_{L^p(\Omega)}.
    \]
    Recall that the condition $u \in H^1(\Omega)$ ensures that
    the normal derivative $\partial_n u$ indeed has a weak sense as an element of $H^{-1/2}(\Gamma_{\mathcal{N}})$,
    as defined in \eqref{eq:normal_derivative_weak_h12}.
    Then the operator $\mathcal{T}$ defined by
    \begin{equation*}
        \mathcal{T} :
        \left\{
            \begin{aligned} E & \to L^p(\Omega)\\
        u & \mapsto \Delta u
            \end{aligned}
        \right.
    \end{equation*}
    is a continuous linear bijection.
    Continuity and linearity are clear from the definition of $\mathcal{T}$, while
    bijectivity follows from~\eqref{eq:regularity sharmir} proved in~\cite{shamir},
    together with~\eqref{eq:regularity mitrea} to obtain the $H^1$ regularity.
    Thus, by the bounded inverse theorem~\cite[Corollary 2.7]{bezis},
    the inverse $\mathcal{T}^{-1}$ is also continuous:
    for any $f \in L^p(\Omega)$,
    \begin{equation*}
        \left\| \mathcal{T}^{-1}f \right\|_{E} \leq C_{s, p} \left\|f \right\|_{L^p(\Omega)},
    \end{equation*}
    which implies that $u$ the solution of \eqref{eq:elliptic f} satisfies
    \begin{equation}\label{eq:constant C sp}
        \left\| u \right\|_{W^{s, p}}
        \leq \left\| u \right\|_{E} \leq C_{s, p} \left\|f \right\|_{L^p(\Omega)},
    \end{equation}
    with $C_{s, p} > 0$ a positive constant that depends on $s$, $p$ and possibly $\Omega$,
    but not on $f$.
    This reasoning can also be applied in the case of \eqref{eq:regularity mitrea},
    where $f \in L^2(\Omega)$
    implies $u \in  H^{\frac{3}{2} - \eta}(\Omega)$ for any $\eta > 0$.

    \textbf{Step 2: $L^{\infty}$ estimates on the eigenfunctions.}
    Fix $k \geq 0$.
    To determine the regularity of~$u_k$ the eigenfunction associated with the eigenvalue $\lambda_k$ introduced in \eqref{eq:elliptic eigen},
    we observe that the solution~$u_k$ is a $H^1(\Omega)$ solution to \eqref{eq:elliptic f} with right-hand side $f= \lambda_k u_k \in L^2(\Omega)$.
    Hence, thanks to \eqref{eq:regularity mitrea},
    the function $u_k$ is in $H^{\frac{3}{2} - \eta}(\Omega)$ for any~$\eta > 0$.
    In particular,
    the Sobolev embedding theorem \cite{Gilbrag} implies that, for the two dimensional case,
    $u_k$ is already in fact bounded and continuous on~$\overline{\Omega}$,
    and in this case, this step is complete.
    In higher dimensions,
    we use a bootstrap argument to show the boundedness of the eigenfunctions.
    This argument is based on the Moser iteration scheme,
    introduced by \cite{MoserOld}
    (see a similar application of the scheme in \cite[Proof of Theorem 5.3]{MoserTrudinger}).

    Assume, for a fixed $k \geq 0$,
    that
    $u_k \in L^p(\Omega)$ for some $p \geq 2$,
    and consider the function $u_{k, M} = \mathrm{sign}(u_k)\, \mathrm{min} (|u_k|, M)$ for some positive integer~$M \in \mathbb{N}^*$,
    and the test function~$\phi_{k, M} = \left|u_{k, M}\right|^{p-2} u_{k, M}$.
    Thanks to the cutoff,
    one easily checks that~$\phi_{k, M} \in
    \{ f \in H^1(\Omega), \, f = 0 \text{ on } \Gamma_\mathcal{D} \}$,
    so it is an admissible test function for the weak formulation of \eqref{eq:elliptic eigen}.
    Hence,
    we have that, as the boundary conditions are all homogeneous,
    \begin{equation}\label{eq:moser iteration step1}
        \int_{\Omega} \nabla u_k \cdot \nabla \phi_{k, M} = \lambda_k \int_{\Omega} u_k \phi_{k, M}.
    \end{equation}
    Since $\nabla u_{k, M} = \mathbbm{1}_{|u_k| \leq M} \nabla u_k$,
    the left-hand side can be rewritten as
    \begin{equation}\label{eq:moser iteration lhs}
        \int_{\Omega} \nabla u_k \cdot \nabla \phi_{k, M} =
        (p-1) \int_{\Omega} \left| \nabla u_{k, M} \right|^2 |u_{k, M}|^{p-2}
        = \frac{4(p-1)}{p^2}\int_{\Omega} \left| \nabla \left( |u_{k, M}|^{\frac{p}{2}} \right) \right|^2,
    \end{equation}
    where the second equality has been obtained through the chain rule.
    For the right-hand side of \eqref{eq:moser iteration step1},
    notice that~$|u_{k, M}| \leq |u_k|$,
    hence
    \begin{equation}\label{eq:moser iteration rhs}
        \lambda_k \int_{\Omega} u_k \phi_{k, M} \leq \lambda_k \int_{\Omega} |u_k|^p.
    \end{equation}
    Combining \eqref{eq:moser iteration step1}, \eqref{eq:moser iteration lhs} and \eqref{eq:moser iteration rhs},
    we obtain that
    \begin{equation}\label{eq:moser uniform bound}
        \frac{4(p-1)}{p^2}\int_{\Omega} \left| \nabla \left(|u_{k, M}|^{\frac{p}{2}} \right) \right|^2 \leq \lambda_k \int_{\Omega} |u_k|^p.
    \end{equation}
    Now,
    $\left(\left| \nabla \left(|u_{k, M}|^{\frac{p}{2}} \right)\right|^2\right)_{M \in \mathbb{N}^*}$ is a non-decreasing, non-negative sequence of functions
    whose integrals are uniformly bounded thanks to \eqref{eq:moser uniform bound}.
    By the monotone convergence theorem,
    one has that, in the limit~$M \to +\infty$,
    \begin{equation}\label{eq:moser bound}
        \int_{\Omega} \left| \nabla \left(|u_k|^{\frac{p}{2}} \right) \right|^2
        \leq \frac{p^2}{4(p-1)}\lambda_k \int_{\Omega} |u_k|^p.
    \end{equation}
    The estimate \eqref{eq:moser bound} shows that $|u_k|^{\frac{p}{2}}$ is in $H^1(\Omega)$,
    and its $H^1$ norm is controlled by the $L^p$ norm of $u_k$.
    Hence,
    by a final application of the Sobolev embedding theorem,
    we find that $|u_k|^{\frac{p}{2}}$ is in $L^{\frac{2d}{d-2}}(\Omega)$,
    and therefore,
    there exists a constant $C^{\rm Sob} > 0$ such that
    \begin{equation}\label{eq:sobolev inject}
        \lVert u_k \rVert_{L^{\frac{dp}{d-2}}(\Omega)}^{\frac{p}{2}}
        = \left\lVert |u_k|^{\frac{p}{2}} \right\rVert_{L^{\frac{2d}{d-2}}(\Omega)}
            \leq C^{\rm Sob} \left\lVert |u_k|^{\frac{p}{2}} \right\rVert_{H^1(\Omega)}
        \end{equation}
        Finally, using \eqref{eq:moser bound} and \eqref{eq:sobolev inject},
    we find
    \begin{equation*}
        \lVert u_k \rVert_{L^{\frac{dp}{d-2}}(\Omega)}
        \leq \left(C^{\rm Sob}\right)^\frac{2}{p} \left( 1 +  \frac{p^2}{4(p-1)}\lambda_k \right)^{\frac{1}{p}} \lVert u_k \rVert_{L^p(\Omega)}.
    \end{equation*}
    Starting from $p=2$,
    we can iterate this reasoning $m$ times to show that $u_k$ is in $L^{p_m}(\Omega)$
    where~${p_m = 2 \left(\frac{d}{d-2}\right)^m}$,
    and that
    \begin{equation}\label{eq:moser iteration final}
        \lVert u_k \rVert_{L^{p_m}(\Omega)}
        \leq \left( \prod_{j=0}^{m-1} \left(C^{\rm Sob}\right)^\frac{2}{p_j} \left( 1 +  \frac{p_j^2}{4(p_j-1)}\lambda_k \right)^{\frac{1}{p_j}} \right) \lVert u_k \rVert_{L^2(\Omega)}.
    \end{equation}
    Since $p_j \geq 2$, one has that $p_j \leq 4 (p_j - 1)$,
    and thus, for any $j \geq 0$,
    \begin{equation*}
        \left( 1 +  \frac{p_j^2}{4(p_j-1)}\lambda_k \right)^{\frac{1}{p_j}}
         \leq 1 +  \frac{p_j^2}{4(p_j-1)}\lambda_k
         \leq p_j (1 + \lambda_k).
    \end{equation*}
    
    In the smooth domain case, choosing $m^*(d) > \frac{\log(d)}{\log(d/(d-2))}$ in \eqref{eq:moser iteration final},
    we find that there exists $q > 2d$,
    and a constant $c = c(m, \Omega, d) > 0$ that does not depend on $\lambda_k$
    such that $u_k \in L^{q}(\Omega)$ and
    \begin{equation}\label{eq:moser iteration final2}
        \lVert u_k \rVert_{L^{q}(\Omega)}
        \leq c \, \left(1 + \lambda_k\right)^{m^{*}(d)} \lVert u_k \rVert_{L^2(\Omega)}.
    \end{equation}
    Now,
    we use the elliptic regularity result we introduced in the first step of the proof.
    Since $u_k \in L^{q}(\Omega)$ with~$q > 2d \geq 4$,
    the eigenvector $u_k$ is in $W^{\frac{1}{2},q}(\Omega)$ by~\eqref{eq:regularity sharmir}.
    Additionally, by \eqref{eq:constant C sp}, we have
    \begin{equation}\label{eq:constant C sp2}
        \lVert u_k \rVert_{W^{\frac{1}{2}, q}(\Omega)}
        \leq C_{\frac{1}{2}, q} \lambda_k \lVert u_k \rVert_{L^{q}(\Omega)}.
    \end{equation}
    A final application of the Sobolev embedding theorem gives that~$u_k \in C^0(\overline \Omega)$,
    and there exists a constant $C_{\rm Sob} > 0$ such that
    \begin{equation}\label{eq:c0 sobolev}
        \lVert u_k \rVert_{C^0(\overline{\Omega})}
        \leq C_{\rm Sob} \lVert u_k \rVert_{W^{\frac{1}{2}, q}(\Omega)}.
    \end{equation}
    Combining \eqref{eq:moser iteration final2},
    \eqref{eq:constant C sp2} and \eqref{eq:c0 sobolev}
    leads to the existence of a constant~$C > 0$, independent of $k$,
    such that
    \begin{equation}\label{eq:bound L infty norm of uk}
        \lVert u_k \rVert_{C^0(\overline{\Omega})}
        \leq C \lambda_k \left( 1 + \lambda_k \right)^{m^*(d)} \lVert u_k \rVert _{L^{2}(\Omega)}.
    \end{equation}
    This provides the desired result in the smooth domain case.
    
    The exact same technique can be employed in the creased domain case,
    taking $m^*(d) = \frac{\log(d/(2\delta))}{\log(d/(d-2))}$,
    where $\delta$ is the exponent in \eqref{eq:regularity Mitrea2},
    leading also to \eqref{eq:bound L infty norm of uk}.
\end{proof}

We now proceed to the proof a result
that will be key to prove \cref{thm:QSD} and \cref{cor:QSD}.
\begin{prop}[Feynman--Kac formula]\label{thm:feynman-kac}
    For any $k \geq 0$,
    consider the eigencouple $(\lambda_k, u_k)$ solution of~\eqref{eq:elliptic eigen}.
    Then,
    for all $t > 0$ and $x \in \Omega$,
    we have
    \[
        \mathbb{E} _x \left[ u_k(X_t)\, \mathbbm{1}_{t < \tau_{\overline{\Gamma_\mathcal{D}}}}\right]
        = \mathrm{e}^{- \lambda_k t}\, u_k(x) \, ,
    \]
    where $\mathbb{E}_x$ is the expectation with respect to the process defined in \eqref{eq:def process reflexion} starting at $x$.
\end{prop}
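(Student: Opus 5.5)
We apply Itô's formula to the space–time function $v(t,x) = \mathrm{e}^{\lambda_k t} u_k(x)$ along the reflected process \eqref{eq:def process reflexion}, stopped at $\tau_{\overline{\Gamma_\mathcal{D}}}$. Formally,
\[
\d \bigl(\mathrm{e}^{\lambda_k s} u_k(X_s)\bigr) = \mathrm{e}^{\lambda_k s}\bigl(\lambda_k u_k + \Delta u_k\bigr)(X_s)\,\d s + \sqrt{2}\,\mathrm{e}^{\lambda_k s}\nabla u_k(X_s)\cdot \d W_s - \mathrm{e}^{\lambda_k s}\partial_n u_k(X_s)\,\d L_s .
\]
The $\d s$ term vanishes by \eqref{eq:elliptic eigen}. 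Before $\tau_{\overline{\Gamma_\mathcal{D}}}$ the local time only charges $\Gamma_\mathcal{N}$, where $\partial_n u_k = 0$, so that term vanishes too. Taking expectations up to $t\wedge\tau$, with $\tau = \tau_{\overline{\Gamma_\mathcal{D}}}$, gives $\mathbb{E}_x[\mathrm{e}^{\lambda_k (t\wedge\tau)}u_k(X_{t\wedge\tau})] = u_k(x)$. Since $u_k$ is continuous on $\overline{\Omega}$ by \cref{lem:continuity}, it vanishes on $\overline{\Gamma_\mathcal{D}}$, hence $u_k(X_\tau)=0$ on $\{\tau\le t\}$. This yields exactly $\mathrm{e}^{\lambda_k t}\mathbb{E}_x[u_k(X_t)\mathbbm{1}_{t<\tau}] = u_k(x)$.

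The difficulty is that $u_k$ is not $C^2$ up to $\overline{\Gamma_\mathcal{D}}\cap\overline{\Gamma_\mathcal{N}}$, so Itô's formula cannot be applied directly. We localize. Let $F_\delta = \{x\in\overline\Omega : \mathrm{dist}(x,\overline{\Gamma_\mathcal{D}}) \le \delta\}$ and $\tau_\delta = \tau_{F_\delta}$. On $\overline\Omega\setminus F_\delta$ the function $u_k$ is smooth up to the boundary portion $\Gamma_\mathcal{N}$ by \cref{lem:continuity}. We therefore take a $C^2(\overline\Omega)$ function $w_\delta$ that coincides with $u_k$ outside $F_{\delta/2}$ and satisfies $\partial_n w_\delta=0$ on $\partial\Omega\setminus F_{\delta/2}$. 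For example, we multiply $u_k$ by a cutoff depending only on the distance to $\overline{\Gamma_\mathcal{D}}$, chosen so that it is constant in the normal direction near $\Gamma_\mathcal{N}$ via the tubular coordinates of \cref{lem:variable change}. Itô's formula for semimartingale reflected diffusions (valid for $C^2$ functions, see \cite{BassHsu}) applied to $\mathrm{e}^{\lambda_k s}w_\delta(X_s)$ up to $t\wedge\tau_\delta$ then gives
\[
\mathbb{E}_x\bigl[\mathrm{e}^{\lambda_k (t\wedge\tau_\delta)}u_k(X_{t\wedge\tau_\delta})\bigr] = u_k(x)
\]
for $\delta$ small enough that $x\notin F_\delta$. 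The stochastic integral is a true martingale because $\nabla w_\delta$ is bounded.

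Finally, we let $\delta\to 0$. Continuity of paths gives $\tau_\delta \uparrow \tau$ almost surely, since the $F_\delta$ are closed and decreasing to $\overline{\Gamma_\mathcal{D}}$. The integrand is bounded by $\mathrm{e}^{\lambda_k t}\|u_k\|_{C^0(\overline\Omega)}$, which is finite by \cref{lem:continuity}. Moreover $u_k(X_{t\wedge\tau_\delta}) \to u_k(X_{t\wedge\tau})$ by continuity of $u_k$ and of the paths. Dominated convergence therefore yields the identity at $t\wedge\tau$, and we conclude as in the first paragraph. The main obstacle is the construction of $w_\delta$ with the Neumann condition preserved near the Dirichlet–Neumann interface (the corner region). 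If the cutoff approach proves awkward, we would instead use that $u_k(X_{s\wedge\tau_\delta})$ only visits the region where $u_k$ is smooth, and invoke a localized Itô formula for functions that are $C^2$ on a neighbourhood of $\overline\Omega\setminus F_\delta$ intersected with $\overline\Omega$.
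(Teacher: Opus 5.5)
Your proof is correct and follows essentially the paper's route: Itô's formula for the reflected process stopped at the hitting time of a $\delta$-neighbourhood of $\overline{\Gamma_\mathcal{D}}$, then $\delta \to 0$ using the continuity of $u_k$ on $\overline{\Omega}$ from \cref{lem:continuity} and $u_k = 0$ on $\overline{\Gamma_\mathcal{D}}$. Your factor $\mathrm{e}^{\lambda_k s}$ is the paper's $v(s,x) = \mathrm{e}^{-\lambda_k (t-s)} u_k(x)$ up to the constant $\mathrm{e}^{\lambda_k t}$, and your single dominated-convergence step replaces the paper's split into two terms. Your explicit $C^2$ modification $w_\delta$ justifies a point the paper glosses over, and the obstacle you anticipate does not arise. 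Any smooth cutoff equal to $0$ on $F_{\delta/4}$ and to $1$ outside $F_{\delta/2}$ works: the Neumann condition is only needed on $\partial\Omega \setminus F_\delta$, where the cutoff is locally constant, so no normal-direction constraint near the Dirichlet--Neumann interface is required.
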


\begin{proof}
    Let $t > 0$ and $k \geq 0$.
    For any $0 \leq s \leq t$ and $x \in \overline{\Omega}$,
    let $v(s, x) = \mathrm{e}^{- \lambda_k (t - s)} u_k(x)$.
    This function satisfies the following backward parabolic problem
    \begin{equation}\label{eq:backward parabolic}
        \left\{
            \begin{aligned}
                \partial_s v & = - \Delta v &&\text{ in } [0, t] \times \Omega \, , \\
                v(t, \cdot) & = u_k &&\text{ in } \Omega \, , \\
                \partial_n v & = 0 &&\text{ on } [0, t] \times \Gamma_\mathcal{N} \, , \\
                v & = 0 &&\text{ on } [0, t] \times \overline{\Gamma_\mathcal{D}} \, .
         \end{aligned}
        \right.
    \end{equation}
    Let~$\Sigma =  \overline{\Gamma_\mathcal{N}} \cap \overline{\Gamma_\mathcal{D}}$ denote the junction of the Dirichlet and Neumann boundaries.
    The part of $\partial \Omega$ that is not necessarily smooth is contained in $\Sigma$.
    The time dependence of $v$ reduces to the scalar factor~$\mathrm{e}^{-\lambda_k (t-s)}$,
    which is analytic on $[0, t]$,
    so that the regularity of $v$ on $[0, t] \times \overline{\Omega}$
    is that of $u_k$ on $\overline{\Omega}$,
    up to and including the final time $s = t$, at which $v(t, \cdot) = u_k$.
    In particular, \cref{lem:continuity} gives that
    $v$ is bounded and continuous on $[0, t] \times \overline{\Omega}$,
    and that $v \in C^\infty\bigl([0, t] \times
    \left(\overline{\Omega} \setminus \Sigma \right)\bigr)$.
    Let~$\Omega_{\delta} \subset \Omega $ denote the subset given by $\Omega _\delta = \Omega \setminus \mathcal{V}_\delta$,
    where~$\mathcal{V}_\delta = \Gamma_\mathcal{D} + B(0, \, \delta)$
    for a small $\delta > 0$.
    Notice that since $\Sigma \subset \subset \mathcal{V}_\delta$,
    the function $v$ is smooth on $[0, t] \times \overline{\Omega_\delta}$.
    Applying It\^o's lemma to the process $X_s^x$ defined in \eqref{eq:def process reflexion}
    with the deterministic initial condition~$X_0^x = x$,
    we obtain
    \begin{equation}\label{eq:ito}
        \begin{aligned}
            v(t \wedge \tau_{\mathcal{V}_\delta}, X_{t \wedge \tau_{\mathcal{V}_\delta}}^x) - v(0, x)
            & = \int_0^{t \wedge \tau_{\mathcal{V}_\delta}} \partial_r v(r, X_r^x) \, \d r
            + \int_0^{t \wedge \tau_{\mathcal{V}_\delta}} \nabla v(r, X_r^x) \cdot \d X_r
            + \int_0^{t \wedge \tau_{\mathcal{V}_\delta}} \Delta v(r, X_r^x) \, \d r \\
            & = \sqrt{2}\int_0^{t \wedge \tau_{\mathcal{V}_\delta}} \nabla v(r, X_r^x) \cdot \d W_r
            - \int_0^{t \wedge \tau_{\mathcal{V}_\delta}} \partial_n v(r, X_r^x) \mathbbm{1}_{X_r^x \in \partial \Omega} \, \d L_r,
        \end{aligned}
    \end{equation}
    with $\tau_{\mathcal{V}_\delta}$ defined in \eqref{eq:def stopping time}.
    Since $\nabla v$ is bounded on $\Omega_\delta$ and $v$ is smooth on $\overline{\Omega_\delta}$,
    we have that
    $$ \mathbb{E}_x \Bigl( \int_0^{t \wedge \tau_{\mathcal{V}_\delta}} \nabla v(r, X_r^x) \cdot \d W_r \Bigr) = 0.$$
    Additionally,
    the second term is also zero since,
    from the Neumann boundary condition in \eqref{eq:backward parabolic},
    we have~$\partial_n v(r, X_r^x) \mathbbm{1}_{X_r^x \in \partial \Omega} = 0$ for all $r < \tau_{\mathcal{V}_\delta}$.
    Hence,
    by taking the expectation in~\eqref{eq:ito},
    we obtain
    \[
        \mathbb{E}_x \left( v(t \wedge \tau_{\mathcal{V}_\delta}, X_{t \wedge \tau_{\mathcal{V}_\delta}}^x) \right)
        = v(0, x) = \mathrm{e}^{- \lambda_k t} u_k(x).
    \]
    To conclude,
    we decompose the expectation into two parts,
    corresponding to whether the process hits~$\mathcal{V}_\delta$ after time $t$ or not.
    Since $v(t, \cdot) = u_k$, this yields
    \begin{equation}\label{eq:decomp expectation}
        \begin{aligned}
        \mathrm{e}^{- \lambda_k t} u_k(x)
        & = \mathbb{E}_x \left(u_k(X_{t}^x) \, \mathbbm{1}_{t < \tau_{\mathcal{V}_\delta}} \right)
        + \mathbb{E}_x \left( v(\tau_{\mathcal{V}_\delta}, X_{\tau_{\mathcal{V}_\delta}}^x) \, \mathbbm{1}_{t \geq \tau_{\mathcal{V}_\delta} } \right).
    \end{aligned}\end{equation}
    From the continuity of the paths $(X_s^x)_{s \geq 0}$,
    the stopping time $\tau_{\mathcal{V}_\delta}$ is a non-increasing function of $\delta$
    that converges to $\tau_{\overline{\Gamma_\mathcal{D}}}$ when $\delta \to 0$.
    Since $u_k$ is bounded on $\overline{\Omega}$ by \cref{lem:continuity},
    the dominated convergence theorem yields
    $$  \mathbb{E}_x \left(u_k(X_t^x) \, \mathbbm{1}_{t < \tau_{\mathcal{V}_\delta}} \right)
        \xrightarrow[\delta \to 0]{}  \mathbb{E}_x \left(u_k(X_t^x) \, \mathbbm{1}_{t < \tau_{\overline{\Gamma_\mathcal{D}}}} \right).$$
    For the second term in \eqref{eq:decomp expectation},
    notice that $\left|v(s, y)\right| \leq \left|u_k(y)\right|$ for any $s \in [0, t]$ and $y \in \overline{\Omega}$ since $\lambda_k \geq 0$,
    and that $X_{\tau_{\mathcal{V}_\delta}}^x \in \overline{\mathcal{V}_\delta} \cap \overline{\Omega}$,
    so that
    \begin{equation}\label{eq:lasteqb2}
        \left| \mathbb{E}_x \left(v(\tau_{\mathcal{V}_\delta}, X_{\tau_{\mathcal{V}_\delta}}^x) \, \mathbbm{1}_{t \geq \tau_{\mathcal{V}_\delta} } \right) \right|
        \leq \sup_{y \in \overline{\mathcal{V}_\delta} \cap \overline{\Omega}} \left|u_k(y)\right|
        \xrightarrow[\delta \to 0]{} 0,
    \end{equation}
    where the convergence follows from the uniform continuity of $u_k$ on the compact set $\overline{\Omega}$,
    together with $u_k = 0$ on $\overline{\Gamma_\mathcal{D}}$.
    Passing to the limit $\delta \to 0$ in \eqref{eq:decomp expectation} concludes the proof.
\end{proof}

Let us now use \cref{thm:feynman-kac} to prove \cref{thm:QSD}.
\begin{proof}[Proof of \cref{thm:QSD}]
    Let $u_0$ be the first element of the orthonormal basis $(u_k)_{k \geq 0}$,
    associated with the smallest eigenvalue $\lambda_0$ of $\mathcal{L}$.
    By \cref{lem:sign eigenfunction}, the eigenvalue $\lambda_0$ is simple
    and $u_0$ has a constant sign in $\Omega$.
    Define
    \[
        \nu = \frac{u_0}{\int_\Omega u_0},
    \]
    which is the non-negative, $L^1$-normalized solution of \eqref{eq:elliptic eigen}
    associated with $\lambda_0$.
    With a slight abuse of notation,
    we also denote by $\nu$ the associated probability measure on $\Omega$.

    \textbf{Step 1: Identification of the law of the killed process.}
    Let $t > 0$ and denote by $q_t = q_t^\nu$ the density provided by \cref{lem:proba} for $\mu = \nu$.
    For any $k \geq 0$,
    integrating the Feynman--Kac formula of~\cref{thm:feynman-kac} with respect to $\nu(\mathrm{d}x)$
    and using the orthonormality of the family $(u_k)_{k \geq 0}$,
    we obtain
    \begin{equation}\label{eq:fourier coefficients qt}
        \bigl( q_t, \, u_k \bigr)_\Omega
        = \mathbb{E}_\nu \left[ u_k(X_t) \, \mathbbm{1}_{t < \tau_{\overline{\Gamma_\mathcal{D}}}} \right]
        = \mathrm{e}^{-\lambda_k t} \bigl( u_k, \, \nu \bigr)_\Omega
        = \mathrm{e}^{-\lambda_k t} \frac{\bigl( u_k, \, u_0 \bigr)_\Omega}{\int_\Omega u_0}
        = \mathrm{e}^{-\lambda_k t} \frac{\delta_{k 0}}{\int_\Omega u_0}.
    \end{equation}
    Since $q_t \in L^2(\Omega)$ by \cref{lem:proba},
    the function $q_t$ coincides with its expansion in the complete orthonormal system $(u_k)_{k \geq 0}$,
    so that \eqref{eq:fourier coefficients qt} yields
    \begin{equation}\label{eq:identification qt}
        q_t = \sum_{k \geq 0} \bigl( q_t, \, u_k \bigr)_\Omega u_k
        = \frac{\mathrm{e}^{-\lambda_0 t}}{\int_\Omega u_0} \, u_0
        = \mathrm{e}^{-\lambda_0 t} \, \nu
        \qquad \text{in } L^2(\Omega).
    \end{equation}

    \textbf{Step 2: $\nu$ is a quasi-stationary distribution.}
    Integrating \eqref{eq:identification qt} over $\Omega$ and using the normalization $\int_\Omega \nu = 1$,
    we obtain
    \begin{equation}\label{eq:exponential law}
        \mathbb{P}_\nu \bigl( t < \tau_{\overline{\Gamma_\mathcal{D}}} \bigr)
        = \int_\Omega q_t
        = \mathrm{e}^{-\lambda_0 t},
        \qquad t \geq 0.
    \end{equation}
    Consequently, for any $t > 0$,
    the law of $X_t$ conditioned on the event $\{ t < \tau_{\overline{\Gamma_\mathcal{D}}} \}$
    admits the density
    \[
        \frac{q_t}{\mathbb{P}_\nu \bigl( t < \tau_{\overline{\Gamma_\mathcal{D}}} \bigr)}
        = \frac{\mathrm{e}^{-\lambda_0 t} \nu}{\mathrm{e}^{-\lambda_0 t}}
        = \nu
    \]
    with respect to the Lebesgue measure.
    Two measures with the same density coincide,
    hence $\mathbb{P}_\nu \bigl( X_t \in \, \cdot \, \big| \, t < \tau_{\overline{\Gamma_\mathcal{D}}} \bigr) = \nu$
    for any $t > 0$,
    which shows that $\nu$ is a quasi-stationary distribution.

    \textbf{Step 3: Yaglom limit and uniqueness.}
    We now prove that $\nu$ is the so-called Yaglom limit of the process:
    for any probability measure $\mu$ on $\Omega$,
    \begin{equation}\label{eq:yaglom limit}
        \lim_{t \to +\infty} \; \sup_{A \subset \overline{\Omega}}
        \left| \mathbb{P}_\mu \bigl( X_t \in A \, \big| \, t < \tau_{\overline{\Gamma_\mathcal{D}}} \bigr) - \nu(A) \right| = 0,
    \end{equation}
    where the supremum is taken over all measurable sets $A \subset \overline{\Omega}$.
    Since this limit does not depend on $\mu$,
    this also implies that $\nu$ is the unique quasi-stationary distribution.
Let $\mu$ be a probability measure on $\Omega$
    and denote by $q_t^\mu$ the density provided by \cref{lem:proba}.
    Integrating the Feynman--Kac formula of \cref{thm:feynman-kac} with respect to $\mu(\mathrm{d}x)$ gives,
    for any $k \geq 0$ and any $t > 0$,
    \begin{equation}\label{eq:fourier coefficients qt mu}
        \bigl( q_t^\mu, \, u_k \bigr)_\Omega
        = \mathrm{e}^{-\lambda_k t} \int_\Omega u_k \, \mathrm{d} \mu,
    \end{equation}
    each term being well defined since $u_k \in C^0(\overline{\Omega})$ by \cref{lem:continuity}.
    Since $q_t^\mu \in L^2(\Omega)$ by \cref{lem:proba},
    it coincides with its expansion in the complete orthonormal system $(u_k)_{k \geq 0}$,
    so that \eqref{eq:fourier coefficients qt mu} yields
    \begin{equation}\label{eq:qt mu in serie form}
        q_t^\mu = \sum_{k \geq 0} \mathrm{e}^{-\lambda_k t} \left( \int_\Omega u_k \, \mathrm{d} \mu \right) u_k .
    \end{equation}
    Let us show that the tail of this series is exponentially small in $L^\infty(\Omega)$.
    Since the family $(u_k)_{k \geq 0}$ is orthonormal in $L^2(\Omega)$
    and $\mu$ is a probability measure on $\Omega$,
    the estimate \eqref{eq:bound L infty norm of uk} of \cref{lem:continuity} gives
    \begin{equation}\label{eq:bound term serie}
        \left\Vert \mathrm{e}^{-\lambda_k t} \left( \int_\Omega u_k \, \mathrm{d} \mu \right) u_k \right\Vert_{L^\infty(\Omega)}
        \leq \mathrm{e}^{-\lambda_k t} \left\Vert u_k \right\Vert_{C^0(\overline{\Omega})}^2
        \leq C \mathrm{e}^{-\lambda_k t } \lambda_k^2 \left( 1 + \lambda_k \right)^{2 m},
    \end{equation}
    with $C > 0$ independent of $k$, and $m > 0$ that only depends on $d$.
    By Weyl's law,
    we know that the $k$-th eigenvalue for the negative Laplacian with either Dirichlet or Neumann boundary conditions
    scales as~$k^\frac{2}{d}$ as~$k \to \infty$ (see a historical introduction in \cite{Weyl} or \cite[Section XIII.15]{ReedSimon}).
    As in~\cite[Proposition~2.3]{Tony2024},
    we can bound the eigenvalues of $\mathcal{L}$ from below by those of the Laplacian with Neumann boundary condition,
    and from above by those of the Laplacian with Dirichlet boundary condition.
    Therefore,
    there exist~$0 < c_- \leq c_+$ such that, for $k$ large enough,
    \begin{equation}\label{eq:weyl mixed}
        c_- k^{\frac{2}{d}} \leq \lambda_k \leq c_+ k^{\frac{2}{d}} .
    \end{equation}
    Fix $t_0 > 0$.
    For any $k \geq 1$ and any $t \geq t_0$,
    one has $\mathrm{e}^{-\lambda_k t} \leq \mathrm{e}^{-\lambda_1 (t-t_0)} \mathrm{e}^{-\lambda_k t_0}$ since $\lambda_k \geq \lambda_1$.
    Summing~\eqref{eq:bound term serie} over $k \geq 1$ therefore leads to
    \begin{equation}\label{eq:expansion qt mu}
        \left\Vert q_t^\mu - \mathrm{e}^{-\lambda_0 t} \left( \int_\Omega u_0 \, \mathrm{d} \mu \right) u_0 \right\Vert_{L^\infty(\Omega)}
        \leq C \mathrm{e}^{-\lambda_1 (t - t_0)} \sum_{k \geq 1} \mathrm{e}^{-\lambda_k t_0} \left( 1 + \lambda_k \right)^{2 m}
        = \mathrm{O}\left( \mathrm{e}^{-\lambda_1 t} \right),
    \end{equation}
    the series being convergent thanks to \eqref{eq:weyl mixed}.
    In particular, the series \eqref{eq:qt mu in serie form} converges normally in $L^\infty(\Omega)$ for $t \geq t_0$.

    Let us now show \eqref{eq:yaglom limit}.
    Notice that $\int_\Omega u_0 \, \mathrm{d} \mu > 0$ since $u_0 > 0$ in $\Omega$,
    and that $\lambda_1 > \lambda_0$ since $\lambda_0$ is simple by \cref{lem:sign eigenfunction}
    (see also \cref{lem:spectral gap} for a lower bound on $\lambda_1$ which is uniform in $\varepsilon$).
    Dividing \eqref{eq:expansion qt mu} by
    $\int_\Omega q_t^\mu = \mathbb{P}_\mu(t < \tau_{\overline{\Gamma_\mathcal{D}}})$
    and factoring out $\mathrm{e}^{-\lambda_0 t}$,
    we obtain
    \begin{equation}\label{eq:lambda0lambda1}
        \frac{q_t^\mu}{\displaystyle \int_\Omega q_t^\mu}
        = \frac{\displaystyle \left( \int_\Omega u_0 \, \mathrm{d} \mu \right) u_0
        + \mathrm{O}\left( \mathrm{e}^{-(\lambda_1 - \lambda_0) t} \right)}
        {\displaystyle \left( \int_\Omega u_0 \, \mathrm{d} \mu \right) \int_\Omega u_0
        + \mathrm{O}\left( \mathrm{e}^{-(\lambda_1 - \lambda_0) t} \right)}
        \xrightarrow[t \to +\infty]{} \nu
        \qquad \text{in } L^\infty(\Omega).
    \end{equation}
    This convergence also holds in $L^1(\Omega)$,
    hence, 
    using the definition of $q_t^\mu$ from \cref{lem:proba}, we obtain~\eqref{eq:yaglom limit}.
    Consequently, if $\widetilde{\nu}$ is a quasi-stationary distribution,
    then by definition the law of $X_t$ under $\mathbb{P}_{\widetilde{\nu}}$
    conditioned on $\{ t < \tau_{\overline{\Gamma_\mathcal{D}}} \}$ is equal to $\widetilde{\nu}$ for any $t > 0$,
    so that taking $\mu = \widetilde{\nu}$ in \eqref{eq:yaglom limit}
    and letting $t \to +\infty$ gives $\widetilde{\nu} = \nu$.

    Finally,
    $u_0$ is in $\mathcal{D}(\mathcal{L}) \cap C^0(\overline{\Omega})$ from \cref{lem:continuity},
    hence $\nu = u_0 / \int_\Omega u_0$ is also in $\mathcal{D}(\mathcal{L}) \cap C^0(\overline{\Omega})$.
\end{proof}
We conclude this appendix with the proof of \cref{cor:QSD}.
\begin{proof}[Proof of \cref{cor:QSD}]
    We show the points of the statement one by one in order.

    \paragraph{Distribution of the exit time.}
    This is a direct consequence of Step $2$ of the proof of \cref{thm:QSD}.
    Indeed, \eqref{eq:exponential law} states that, for any $t \geq 0$,
    \[
        \mathbb{P}_\nu(t < \tau_{\overline{\Gamma_\mathcal{D}}}) = \mathrm{e}^{-\lambda_0 t},
    \]
    so the first exit time is indeed exponentially distributed
    with parameter $\lambda_0$, the smallest eigenvalue of $\mathcal{L}$.

    \paragraph{Independence of $\tau_{\overline{\Gamma_\mathcal{D}}}$ and $X_{\tau_{\overline{\Gamma_\mathcal{D}}}}$.}
    Notice that for any $t \geq 0$ and any measurable set $A \subset \Gamma_\mathcal{D}$,
    \begin{align*}
        \mathbb{P}_\nu(t < \tau_{\overline{\Gamma_\mathcal{D}}}, X_{\tau_{\overline{\Gamma_\mathcal{D}}}} \in A)
        & = \mathbb{P}_\nu(t < \tau_{\overline{\Gamma_\mathcal{D}}})
        \mathbb{P}_\nu(X_{\tau_{\overline{\Gamma_\mathcal{D}}}} \in A \, | \, t < \tau_{\overline{\Gamma_\mathcal{D}}})\\
        & = \mathbb{P}_\nu(t < \tau_{\overline{\Gamma_\mathcal{D}}})
        \mathbb{P}_\nu(X_{\tau_{\overline{\Gamma_\mathcal{D}}}} \in A),
    \end{align*}
    where we used that
    $\mathbb{P}_\nu(X_{\tau_{\overline{\Gamma_\mathcal{D}}}} \in A \, | \, t < \tau_{\overline{\Gamma_\mathcal{D}}}) = \mathbb{P}_\nu(X_{\tau_{\overline{\Gamma_\mathcal{D}}}} \in A)$,
    which is a direct consequence of the definition of $\nu$ as a quasi-stationary distribution
    for the process~$(X_t)_{t \geq 0}$.

    \paragraph{Exit point distribution.}
    Consider $\phi \in C^0(\overline{\Gamma_\mathcal{D}}) \cap H^\frac{1}{2}(\Gamma_\mathcal{D})$,
    and consider the solution $v \in H^1(\Omega)$ of the following elliptic problem:
    \begin{equation} \label{eq:exit hole proba}
        \left\{
            \begin{aligned}
                - \Delta v(x) & = 0 && \text{ in } \Omega, \\
                \partial_n v(x) & = 0 && \text{ on } \Gamma_\mathcal{N}, \\
                v(x) & = \phi(x)
            && \text{ on } \Gamma_\mathcal{D}.
         \end{aligned}
        \right.
    \end{equation}
    Classical results (see e.g. \cite{Savare}) ensure the existence,
    uniqueness and regularity of the solution of \eqref{eq:exit hole proba},
    in particular $v \in C^\infty(\Omega_\delta) \cap C^0(\overline{\Omega})$,
    with $\Omega_\delta = \Omega \setminus \mathcal{V}_\delta$,
    where $\mathcal{V}_\delta = \Gamma_\mathcal{D} + B(0, \, \delta)$ for a small $\delta > 0$
    as in the proof of~\cref{thm:feynman-kac} (for the continuity of $v$ on $\overline{\Omega}$,
    see \cite[Theorem of Section 1]{MitreaMitrea} for creased domains,
    and \cite[Theorem 5.3 and Corollary 5.4]{shamir}).

    We first show a Feynman--Kac formula for this problem, i.e., for any $x \in \Omega$,
    \begin{equation}\label{eq:exit hole proba feynman-kac}\begin{aligned}
    v(x) & = \mathbb{E}_x \left( \phi(X_{\tau_{\overline{\Gamma_\mathcal{D}}}}) \right).
    \end{aligned}\end{equation}
    The arguments are the same as in the proof of the Feynman--Kac formula of \cref{thm:feynman-kac}.
    By the regularity $C^\infty(\Omega_\delta) \cap C^0(\overline{\Omega})$ of $v$,
    we can apply It\^o's lemma to the process $X_s^x$ defined in \eqref{eq:def process reflexion}
    with the deterministic initial condition~$X_0^x = x \in \Omega_\delta$,
    and obtain for any $t > 0$,
    \begin{equation}\label{eq:ito exit hole}
        v(X_{t \wedge \tau_{\mathcal{V}_\delta}}^x) - v(x) =
        \sqrt{2} \int_0^{t \wedge \tau_{\mathcal{V}_\delta}} \nabla v(X_s^x) \cdot \d W_s +
        \int_0^{t \wedge \tau_{\mathcal{V}_\delta}} \Delta v(X_s^x) \d s
        - \int_{0}^{{t \wedge \tau_{\mathcal{V}_\delta}}} \partial_n v(X_s^x) \mathbbm{1}_{X_{s}^x \in \partial \Omega} \d L_s.
    \end{equation}
    The first term is a martingale,
    since $v$ is smooth on $\Omega_\delta$.
    The second and the third terms of \eqref{eq:ito exit hole} are zero,
    since $v$ is a solution of \eqref{eq:exit hole proba}.
    By taking the limit $\delta \to 0$,
    justified as in the proof of \cref{thm:feynman-kac},
    we have that
    \[
        v(x) =
        \mathbb{E}_x \left( v(X_{t \wedge \tau_{\overline{\Gamma_\mathcal{D}}}}^x) \right).
    \]
    Taking the limit $t \to +\infty$ and using the dominated convergence theorem
    thanks to the continuity of~$v$ on~$\overline{\Omega}$,
    \[
        v(x) = \mathbb{E}_x \left[ v(X_{\tau_{\overline{\Gamma_\mathcal{D}}}}) \right] =
        \mathbb{E}_x \left[ \phi(X_{\tau_{\overline{\Gamma_\mathcal{D}}}}) \right] .
    \]
    This is the Feynman--Kac formula \eqref{eq:exit hole proba feynman-kac} for the problem \eqref{eq:exit hole proba}.

    Now notice that, since $\nu$ is an eigenvector associated with the eigenvalue $\lambda_0$ of the operator $\mathcal{L}$,
    and~${\nu \in \mathcal{D}(\mathcal{L}) \cap C^0(\overline{\Omega})}$,
    we have that
    \begin{equation} \label{eq:integral exit hole}\begin{aligned}
        \mathbb{E}_\nu \left[ \phi(X_{\tau_{\overline{\Gamma_\mathcal{D}}}}) \right]
        & = \int_\Omega \mathbb{E}_x \left[ \phi(X_{\tau_{\overline{\Gamma_\mathcal{D}}}}) \right]
        \nu(\mathrm{d}x) = \int_\Omega v(x) \nu(x) \, \mathrm{d}x
        = -\frac{1}{\lambda_0} \int_\Omega v(x) \Delta \nu(x) \, \mathrm{d} x.
    \end{aligned}\end{equation}
    Now, by the definition of the duality \eqref{eq:normal_derivative_weak_h12},
    \begin{equation}\label{eq:weak nu}
        \left\langle \partial_n \nu, \, v\right\rangle_{H^{-\frac{1}{2}}(\partial \Omega), H^{\frac{1}{2}}(\partial \Omega)}
        = \int_\Omega \nabla \nu(x) \cdot \nabla v(x) \, \mathrm{d} x + \int_\Omega \Delta \nu(x) v(x) \,\mathrm{d} x.
    \end{equation}
    Similarly, since $\partial_n v \in H^{-\frac{1}{2}}(\partial \Omega)$ and $\nu \in H^{1}(\Omega)$,
    one gets
    \begin{equation}\label{eq:weak v}
         \left\langle \partial_n v, \, \nu\right\rangle_{H^{-\frac{1}{2}}(\partial \Omega), H^{\frac{1}{2}}(\partial \Omega)}
        = \int_\Omega \nabla \nu(x) \cdot \nabla v(x)  \mathrm{d} x + \int_\Omega \Delta v(x) \nu(x) \mathrm{d} x.
    \end{equation}
    Given that $\partial_n v = 0$ on $\Gamma_\mathcal{N}$, $\Delta v = 0$ from \eqref{eq:exit hole proba},
    and that $\nu = 0$ on $\overline{\Gamma_\mathcal{D}}$ from \eqref{eq:domain L} and the continuity of~$\nu$ from \cref{thm:QSD},
    it follows from \eqref{eq:weak v} that
    \[
        \int_\Omega \nabla \nu(x) \cdot \nabla v(x)  \mathrm{d} x = 0.
    \]
    Substituting this into \eqref{eq:weak nu} and rearranging
    leads to
    \begin{equation}\label{eq:weak nu 2}\begin{aligned}
        \int_\Omega \Delta \nu(x) v(x) \mathrm{d} x
        & = \left\langle \partial_n \nu, \, v\right\rangle_{H^{-\frac{1}{2}}(\partial \Omega), H^{\frac{1}{2}}(\partial \Omega)}\\
        & = \left\langle \partial_n \nu, \, \phi \right\rangle_{H^{-\frac{1}{2}}(\partial \Omega), H^{\frac{1}{2}}(\partial \Omega)}
        + \left\langle \partial_n \nu, \, v - \phi \right\rangle_{H^{-\frac{1}{2}}(\partial \Omega), H^{\frac{1}{2}}(\partial \Omega)},
    \end{aligned}\end{equation}
    where $\phi$ denotes any $C^0(\partial \Omega)$ extension of $\phi$ from $\overline{\Gamma_\mathcal{D}}$ to $\partial \Omega$ in $C^0(\partial \Omega)$.
    Notice that the duality pairing between $\partial_n \nu$ and $\phi$ does not depend 
    on the choice of the extension by the continuity of $\phi$ and \eqref{eq:meaning dn nu zero}.
    The last term in \eqref{eq:weak nu 2} is zero since $v - \phi = 0$ on $\overline{\Gamma_\mathcal{D}}$, 
    thus $v - \phi \in H^{\frac{1}{2}}_{00}(\Gamma_{\mathcal N})$.
    Hence,
    using \eqref{eq:weak nu 2} and \eqref{eq:integral exit hole}, we obtain
    \begin{equation}\label{eq:integral exit hole 2}
        \mathbb{E}_\nu \left[ \phi(X_{\tau_{\overline{\Gamma_\mathcal{D}}}}) \right]
        = -\frac{1}{\lambda_0} \left\langle \partial_n \nu, \, \phi\right\rangle_{H^{-\frac{1}{2}}(\partial \Omega
        ), H^{\frac{1}{2}}(\partial \Omega)}.
    \end{equation}

    It remains to show that $\partial_n \nu$ is a Radon measure and compute its mass.
    For this,
    we distinguish between the creased domain case, and the smooth domain case.

    \paragraph{The case of a creased domain.}
    In this situation,
    from \cite[Proposition 32]{TonyBoris2022}, used in \cite[Lemma 1.4]{Tony2024},
    and derived by \cite{jakab2009, Mitrea2},
    the normal derivative $\partial_n \nu$ is a function in $L^2(\partial \Omega)$.
    Hence, $\partial_n \nu$ is a Radon measure supported on $\overline{\Gamma_\mathcal{D}}$,
    absolutely continuous with respect to the surface measure $\sigma$ on $\partial \Omega$.

    \paragraph{The case of a smooth domain.}
    In this situation,
    the idea of the proof is to consider $-\frac{1}{\lambda_0}\partial_n \nu$
    as a distribution on $\partial \Omega$ since the boundary is smooth.
    Then,
    it can be shown that this distribution is positive (see \cite[Proposition A.1]{narrow2d} for the details),
    and hence it is a Radon measure (see this classical result in \cite[Theorem 3.18]{Duistermaat}).\\

    Hence, 
    in both cases, 
    we showed that for any $\phi \in C^0(\overline{\Gamma_\mathcal{D}}) \cap H^\frac{1}{2}(\Gamma_\mathcal{D})$,
    \begin{equation}\label{eq:integral exit hole 3}
        \mathbb{E}_\nu \left[ \phi(X_{\tau_{\overline{\Gamma_\mathcal{D}}}}) \right]
        = - \frac{1}{\lambda_0} \int_{\Gamma_\mathcal{D}} \phi \, \mathrm{d} \left(\partial_n \nu\right).
    \end{equation}
    From the continuity of the integral, the formula \eqref{eq:integral exit hole 3} can be extended to any $\phi \in C^0(\overline{\Gamma_\mathcal{D}})$
    using the density of $C^0(\overline{\Gamma_\mathcal{D}}) \cap H^\frac{1}{2}(\Gamma_\mathcal{D})$ in $C^0(\overline{\Gamma_\mathcal{D}})$.

    Since $\nu \in \mathcal{D}(\mathcal{L})$
    it is clear that the support of $\partial_n \nu$ is included in $\overline{\Gamma_\mathcal{D}}$.
    Thanks to the $L^1$ normalization condition, its total mass is given by
    \[
        - \frac{1}{\lambda_0} \partial_n \nu(\overline{\Gamma_\mathcal{D}})
        = - \frac{1}{\lambda_0} \left\langle \partial_n \nu, \, 1\right\rangle_{H^{-\frac{1}{2}}(\partial \Omega), H^{\frac{1}{2}}(\partial \Omega)}
        = - \frac{1}{\lambda_0} \int_\Omega \Delta \nu(x) \, \mathrm{d} x
        = - \frac{1}{\lambda_0} \int_\Omega -\lambda_0 \nu(x) \, \mathrm{d} x
        = 1.
    \]
    Finally, the claimed formula for the exit hole is obtained with \eqref{eq:integral exit hole 2},
    using $\phi = \mathbbm{1}_{\overline{\Gamma_k}}$,
    which is indeed continuous on $\overline{\Gamma_\mathcal{D}}$ as the holes are disjoint.
\end{proof}

\printbibliography
\end{document}